\documentclass[11pt]{article}
\usepackage{amsmath,amssymb,amsthm}
\usepackage{booktabs} 
\usepackage{tikz}
\usetikzlibrary{calc}
\usepackage{float}
\usepackage{xcolor}
\usepackage{authblk}
\usepackage{autobreak}
\usepackage{enumerate}
\usepackage{hyperref}
\hypersetup{
	colorlinks,
	linkcolor={red!100!black},
	citecolor={green!60!black},
	urlcolor={blue!60!black}
}

\newtheorem{theorem}{Theorem}
\newtheorem{lemma}{Lemma}[section]
\newtheorem{proposition}{Proposition}[section]

\theoremstyle{definition}

\newtheorem{claim}{Claim}[section]
\newtheorem{conjecture}{Conjecture}

\newenvironment{subproof}[1][\proofname]{%
  \begin{proof}[#1]%
}{%
  \end{proof}%
}

\begin{document}

\title{New bounds for Ramsey numbers involving graphs with a center}

\author[,\,1]{Yanbo Zhang\thanks{Corresponding author.}}
\affil[1]{School of Mathematical Sciences\\ Hebei Normal University\\ Shijiazhuang 050024, China}
\author[2]{Yaojun Chen}
\affil[2]{School of Mathematics\\ Nanjing University\\ Nanjing 210093, China}

\date{}
\maketitle
\let\thefootnote\relax\footnotetext{\emph{Email addresses:} {\tt ybzhang@hebtu.edu.cn} (Yanbo Zhang), {\tt yaojunc@nju.edu.cn} (Yaojun Chen)}
\begin{quote}
{\bf Abstract:} Let $F_n$,  $W_n$, and $\widehat{K}_n$ be the graphs obtained by joining a vertex to $n$ independent edges, a cycle and a path of order $n-1$, respectively. In this paper, we give new bounds for the Ramsey numbers $R(F_n,F_m)$ and $R(W_n,W_n)$, which improve those due to Chen, Yu, and Zhao [EJC, 2021] and Mao, Wang, Magnant, and Schiermeyer [G\&C, 2022], respectively, and establish lower and upper bounds for $R(\widehat{K}_n,\widehat{K}_n)$. Moreover, we present a blow-up technique to establish some new lower bounds for the Ramsey numbers of wheels versus cliques.


{\bf Keywords:} Ramsey number, fan, wheel, kipas, clique

{\bf 2020 Mathematics Subject Classification:} 05C55, 05D10
\end{quote}

\section{Introduction}
Given two graphs $G_1$ and $G_2$, the Ramsey number $R(G_1, G_2)$ is defined as the smallest positive integer $N$ such that any red-blue edge-coloring of $K_N$ contains either a red $G_1$ as a subgraph or a blue $G_2$ as a subgraph. When $G_1=G_2$, we abbreviate $R(G_1, G_2)$ as $R(G_1)$. For the latest results on Ramsey numbers, refer to the surveys~\cite{Conlon2015,Radziszowski2024}.

This paper aims to improve the bounds on the Ramsey numbers for certain classes of graphs. We denote by $nK_2$, $P_n$, $C_n$, and $K_n$ a matching of $n$ edges, a path, a cycle, and a complete graph on $n$ vertices, respectively. We use $K_n-e$ to denote the graph obtained by removing an edge from $K_n$. The graph $K_1+G$ is formed by adding an extra vertex to the graph $G$, and connecting this extra vertex to every vertex in $G$. In this construction, the graphs $K_1+nK_2$, $K_1+C_n$, and $K_1+P_n$ are referred to as a fan, a wheel, and a kipas,  and are denoted by $F_n$, $W_{n+1}$, and $\widehat{K}_{n+1}$, respectively. Note that in many references, the  wheel  and the kipas on $n+1$ vertices are denoted by $W_n$ and $\widehat{K}_n$, respectively. In this paper, the subscripts of these two symbols represent their orders. The Ramsey numbers for fans, wheels, and kipas are widely studied.

For fans, Chen, Yu, and Zhao~\cite{Chen2021} established the following bounds for $R(F_n,F_m)$.

\begin{theorem}[Chen, Yu, and Zhao~\cite{Chen2021}] \label{chen-1}
Let $m \ge 4$. Then $R(F_n,F_m) \ge 3n+3m/2-5$ for $m < n \le 3m/2 - 3$ and $9n/2-5 \le R(F_n) \le 11n/2+6$.
\end{theorem}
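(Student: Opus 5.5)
The statement has three parts: the asymmetric lower bound, the diagonal lower bound $R(F_n)\ge 9n/2-5$, and the diagonal upper bound $R(F_n)\le 11n/2+6$. I would prove the two lower bounds by explicit colorings and the upper bound by a counting argument. The basic observation throughout is that a vertex $v$ is the center of a red $F_n$ exactly when its red neighborhood $N_R(v)$ spans a red graph of matching number at least $n$; the same holds for blue. Every step reduces to controlling matching numbers inside monochromatic neighborhoods.

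\emph{Lower bounds.} For the first bound I need a red/blue coloring of $K_{3n+3m/2-6}$ such that every red neighborhood has red matching number at most $n-1$ and every blue neighborhood has blue matching number at most $m-1$.
\begin{itemize}
\item I would partition the vertices into a few blocks, say three blocks $A,B,C$ whose sizes are linear combinations of $n$ and $m$ (roughly $2m-c$, $n-c'$, and so on).
\item Inside each block the coloring is nearly monochromatic. For instance, a block of size at most $2m$ colored blue cannot contain a blue $F_m$, since it has too few vertices. Between blocks the colors alternate so that every red neighborhood consists of an independent-like part plus a part of size less than $n$.
\item The matching number of a red neighborhood is then at most the size of its smaller side plus the matching number inside that side. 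This quantity is kept below $n$.
\item The hypothesis $m<n\le 3m/2-3$ should be exactly the range where the block sizes are nonnegative and both inequalities hold simultaneously.
\end{itemize}
The diagonal bound $9n/2-5$ comes from the same template with $m=n$, after adjusting a block by one or two vertices to respect parity. I would find the parameters by writing the two matching-number constraints as linear inequalities in the block sizes and maximizing the total number of vertices.

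\emph{Upper bound.} Take a 2-coloring of $K_N$ with $N=\lfloor 11n/2\rfloor+6$ and suppose it has no monochromatic $F_n$. For each vertex $v$ with red degree $d_R$, the red graph on $N_R(v)$ has matching number at most $n-1$. By the Erdős–Gallai theorem, it has at most
\[
\max\left\{\binom{2n-1}{2},\ \binom{n-1}{2}+(n-1)(d_R-n+1)\right\}
\]
edges. Hence $N_R(v)$ spans almost $\binom{d_R}{2}$ blue edges, and similarly $N_B(v)$ spans almost $\binom{d_B}{2}$ red edges. Summing over $v$ counts non-monochromatic ``cherries'' from one side and gives a lower bound on the number of monochromatic triangles. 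Alternatively, I would compare directly with Goodman's formula for the number of monochromatic triangles, which is expressed through $\sum_v d_R(v)d_B(v)$. Combining the two estimates with $d_R+d_B=N-1$ should give a contradiction once $N\ge 11n/2+6$.

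\emph{Main obstacle.} I expect the upper-bound computation to be the hard step. The Erdős–Gallai bound is tight only for specific neighborhood sizes, so a crude averaging may lose the constant $11/2$. I would first try it with the convexity of $d\mapsto \binom{d}{2}$, using $d_R+d_B=N-1$. If that is not sharp enough, I would refine it using the Gallai–Edmonds structure of the red graph on $N_R(v)$: such a graph is covered by a set $S$ and odd components with $|S|+\sum\lfloor |C|/2\rfloor\le n-1$. I would then exploit the resulting large blue near-clique inside $N_R(v)$ to build a blue $F_n$ directly.
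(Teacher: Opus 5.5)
The paper does not prove this theorem; it is quoted from Chen, Yu, and Zhao. So I am judging your outline on its own, and both halves have a genuine gap.

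\textbf{Lower bounds.} You never produce a coloring. There are no block sizes and no colors. There is also no check of blue fans whose center lies in one block while their triangles use edges between other blocks. The range $m<n\le 3m/2-3$ is only asserted to be where the inequalities hold. A lower bound of this kind consists of its construction, and it is not clear that three blocks can work at all. The natural block colorings, for example a red $2K_{2n}$ with all edges between the two cliques blue, stop at about $4n$ vertices.

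A construction that does work is in the paper's proof of Theorem~\ref{thmfan}. Take a red $K_{2n}$ all of whose edges to the rest are blue, together with red cliques $H_1,\dots,H_4$. Edges between $H_1\cup H_4$ and $H_2\cup H_3$ are red, and edges between $H_1$ and $H_4$ and between $H_2$ and $H_3$ are blue. The sizes make every red degree at most $2n-1$. The blue graph outside $K_{2n}$ has the vertex cover $H_3\cup H_4$ of size $m-1$, so no blue $F_m$ is centered in $K_{2n}$. It also has maximum degree at most $m-1$, so no blue $F_m$ is centered elsewhere, since $K_{2n}$ spans no blue edge. It has $4n+\lceil m/2\rceil-1$ vertices for $m\le n\le 5m/4-1$ and $2n+3m-3$ vertices for $5m/4-1<n\le 3m/2-2$. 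Both exceed $3n+3m/2-6$ throughout the stated range. For $m=n\ge 4$ it gives $R(F_n)\ge\lceil 9n/2\rceil$; for $n\le 3$ the red $2K_{2n}$ already gives $4n+1\ge 9n/2-5$.

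\textbf{Upper bound.} Your counting step fails at the level of the leading constant, not just in lower-order terms. Write $r_v=d_R(v)$ and $b_v=d_B(v)$. The number of monochromatic triangles at $v$ is $e_R(N_R(v))+e_B(N_B(v))$, and Goodman's identity gives
\[
\sum_v\bigl(e_R(N_R(v))+e_B(N_B(v))\bigr)=\sum_v\Bigl(\binom{N-1}{2}-\tfrac32\,r_vb_v\Bigr).
\]
Suppose every vertex has $r_v=b_v=(N-1)/2$. Then the right side is $(N-1)(N-5)/8\approx N^2/8$ per vertex. For $d\ge 5n/2$, Erd\H{o}s--Gallai caps $e_R(N_R(v))$ only at $\binom{n-1}{2}+(n-1)(d-n+1)\approx nd-n^2/2$, and this bound is attained. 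So the two neighborhoods may carry about $n(N-n)$ monochromatic triangles at $v$. The balanced degree profile therefore gives no contradiction unless $N^2/8>n(N-n)$, that is, $N>(4+2\sqrt2)\,n\approx 6.83n$. At $N=11n/2$ you get about $3.8n^2$ triangles forced per vertex against $4.5n^2$ allowed.

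Your claim that $N_R(v)$ spans \emph{almost} $\binom{d_R}{2}$ blue edges is false at this scale. When $d_R\approx 11n/4$, only about two fifths of the pairs are forced to be blue. Convexity cannot rescue the count, because the bad case is exactly the balanced one.

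To get near $11n/2$ you must use the absence of a blue $F_n$ to restrict the red structure inside $N_R(v)$ beyond what its matching number gives. For instance, the Erd\H{o}s--Gallai extremal graph places a blue clique of about $d_R-n$ vertices inside $N_R(v)$, and those vertices then have large blue neighborhoods. You must then turn that restriction into a sharper count or a direct fan. Your final sentence names a Gallai--Edmonds refinement of this kind but supplies no argument, and that is where the whole upper-bound proof lies.
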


Later, Dvo\v{r}\'ak and Metrebian~\cite{Dvorak2023} improved the upper bound for the diagonal case $R(F_n)$ to $31n/6+15$.

For wheels, Mao, Wang, Magnant, and Schiermeyer~\cite{Mao2022} established the following lower and upper bounds for $R(W_n)$.
\begin{theorem}[Mao, Wang, Magnant, and Schiermeyer~\cite{Mao2022}]  \label{Mao-1} For $n \ge 7$, when $n$ is even, $3n-3 \le R(W_n) \le 8n-10$; and when $n$ is odd, $2n-2 \le R(W_n) \le 6n-8$.
\end{theorem}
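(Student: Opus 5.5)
The lower bounds come from explicit colorings, and the upper bounds from a neighbourhood argument. Throughout, recall that $W_n=K_1+C_{n-1}$ has $n$ vertices. A monochromatic $W_n$ with hub $v$ is the same thing as a monochromatic $C_{n-1}$ inside the set of vertices joined to $v$ in that color.

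\emph{Lower bounds.} For $n$ even, $C_{n-1}$ is an odd cycle. I would partition $3n-3$ vertices into three sets $V_1,V_2,V_3$, each of size $n-1$. Color every edge inside a part red and every edge between parts blue.
\begin{itemize}
\item A red $W_n$ would need $n$ vertices in one red clique, which has only $n-1$ vertices.
\item A blue $W_n$ with hub in $V_1$ would need a blue $C_{n-1}$ inside $V_2\cup V_3$. The blue graph there is complete bipartite, so it has no odd cycle.
\end{itemize}
Hence $R(W_n)\ge 3n-2\ge 3n-3$. For $n$ odd, the same idea with only two red cliques $K_{n-1}$ works. The blue graph is then bipartite, so it contains no triangle and hence no $W_n$. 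This gives $R(W_n)\ge 2n-1\ge 2n-2$.

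\emph{Upper bounds.} Let $N=8n-10$ for $n$ even, or $N=6n-8$ for $n$ odd, and fix a $2$-coloring of $K_N$. Any vertex $v$ has degree $N-1$, so it has at least $\lceil (N-1)/2\rceil$ neighbours in one color, say red. This is at least $4n-5$ (respectively $3n-4$). Let $R$ be this red neighbourhood of $v$.

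If the red graph on $R$ contains $C_{n-1}$, we already have a red $W_n$ with hub $v$. Otherwise the red graph on $R$ is $C_{n-1}$-free. I would use extremal results for cycles (Erdős--Gallai / Woodall type bounds) to show that it then has at most roughly $(n-2)|R|/2$ edges. The blue graph on $R$ therefore has average degree at least about $|R|-n$.

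Next I would pick a vertex $u\in R$ of large blue degree and restrict attention to its blue neighbourhood $B\subseteq R$. Then $|B|$ is much larger than $n-1$, and $B$ induces a blue graph that is nearly complete. By removing low-degree vertices I aim to obtain a subgraph of $B$ whose minimum degree is more than half its order. A Dirac-type pancyclicity argument, or Bondy's theorem that a Hamiltonian graph with at least $m^2/4$ edges is pancyclic, would then give a blue $C_{n-1}$ in $B$, and hence a blue $W_n$ with hub $u$.

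\emph{Parity.} For $n$ odd the target cycle $C_{n-1}$ is even. Even cycles exist in bipartite graphs, and the bipartite obstruction from the lower-bound construction disappears. This is why the smaller neighbourhood of size about $3n$ should suffice in that case, while for odd cycles the bound $8n-10$ leaves room to handle the case where the blue graph is close to bipartite.

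The main obstacle is obtaining a cycle of the \emph{exact} length $n-1$ rather than just a long cycle. For this I would first invoke Brandt's weak pancyclicity theorem: a non-bipartite graph on $m$ vertices with more than $(m-1)^2/4+1$ edges contains cycles of every length from $4$ up to its circumference. When the blue graph on $B$ is bipartite or near-bipartite and $n-1$ is odd, Brandt's theorem does not apply. In that case I would instead argue that the red graph on one large side of the bipartition is nearly complete, which produces a red $C_{n-1}$ there directly. The counting in this step is where the constants $8n-10$ and $6n-8$ would be fixed.
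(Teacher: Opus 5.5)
Your lower bounds are correct. The $3K_{n-1}$ colouring for even $n$ is exactly the paper's construction. For odd $n$, your two red $K_{n-1}$'s suffice for $2n-2$; the paper only cites this theorem, supersedes it with Theorem~\ref{thmwheel}, and gets a stronger odd bound from the kipas constructions.

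The upper bound has a genuine gap. Its first quantitative step claims that a $C_{n-1}$-free red graph on $R$ has at most about $(n-2)|R|/2$ edges. This is false for even $n$: then $n-1$ is odd, so every bipartite graph is $C_{n-1}$-free, e.g.\ a red $K_{|R|/2,|R|/2}$ with $|R|^2/4$ edges. Erd\H{o}s--Gallai and Woodall-type results do not give the bound for odd $n$ either, because they control graphs with no \emph{long} cycle, not graphs missing one particular length. Everything afterwards rests on this estimate: blue average degree about $|R|-n$, a vertex $u$ whose blue neighbourhood $B$ is ``nearly complete'', and a subgraph with minimum degree above half its order. The fallback in your last paragraph contradicts that picture, since a nearly complete blue graph on $B$ is not near-bipartite, and it never shows where an exact red $C_{n-1}$ comes from. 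As a result, the constants $8n-10$ and $6n-8$ are never actually derived.

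The missing idea is to obtain a \emph{minimum-degree} condition directly from cycle--cycle Ramsey numbers, as the paper does in Lemma~\ref{CycleWheelLemma} and Proposition~\ref{threecliques}. Let $H$ be the red neighbourhood of $v$, and let $u$ be a vertex of minimum red degree in $H$.
If $u$ has at least $R(C_{n-1},C_{n-1})$ blue neighbours in $H$, then among them there is either a red $C_{n-1}$, giving a red $W_n$ with hub $v$, or a blue $C_{n-1}$, giving a blue $W_n$ with hub $u$.
Otherwise, Lemma~\ref{cycle} gives $R(C_{n-1},C_{n-1})=2n-3$ for even $n$ and $(3n-5)/2$ for odd $n$. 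So the red graph on $H$ has minimum degree at least $|H|-2n+3$, respectively $|H|-(3n-5)/2$. This is at least $|H|/2$ once $|H|\ge 4n-6$, respectively $|H|\ge 3n-5$. Both hold, because $|H|\ge\lceil (N-1)/2\rceil$ equals $4n-5$, respectively $3n-4$.
Bondy's theorem (Lemma~\ref{BondyLemma}) then gives a red $C_{n-1}$, unless the red graph on $H$ is $K_{h,h}$. For odd $n$ that graph still contains the even cycle $C_{n-1}$. For even $n$ the blue graph on $H$ then contains $K_h$ with $h\ge 2n-2\ge n$, hence a blue $W_n$.

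This proves both stated upper bounds, and in fact slightly better ones. For its own stronger bounds, the paper uses the same neighbourhood reduction. For even $n$ it cites $R(C_{n-1},W_n)=3n-5$ (Lemma~\ref{Chenlemma}). For odd $n$ it combines the trick above with the Brandt--Faudree--Goddard and Dirac lemmas, lowering the neighbourhood threshold to roughly $9n/4$.
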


One of our main goals is to improve the lower bound for $R(F_n,F_m)$ and general lower and upper bounds for $R(W_n)$. The new bounds are presented in the following two theorems.

\begin{theorem}\label{thmfan}
Let $m \ge 4$. Then
  \begin{equation*}
    R(F_n, F_m) \ge  
    \begin{cases} 
    4n + m/2, & \text{for } m \le n \le 5m/4 - 1;\\
    2n + 3m - 2, & \text{for } 5m/4 - 1 < n \le 3m/2 - 2.
    \end{cases}
  \end{equation*}
  In particular, $R(F_n) \ge \left\lceil 9n/2 \right\rceil$ for $n \ge 4$.
\end{theorem}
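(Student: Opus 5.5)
The plan is to prove both lower bounds by explicit colorings. For each range we exhibit a red/blue coloring of $K_{N-1}$, with $N=4n+m/2$ (rounded suitably) or $N=2n+3m-2$, that has no red $F_n$ and no blue $F_m$. The reformulation we will use is this: a coloring contains a red $F_n$ if and only if some vertex $v$ has a red neighborhood $N_R(v)$ whose induced red graph has matching number at least $n$. The same holds for blue with $m$. So it suffices to check, for every vertex $v$, that
$$\nu(R[N_R(v)])\le n-1 \quad\text{and}\quad \nu(B[N_B(v)])\le m-1 .$$
The tool for bounding $\nu$ will be the Tutte--Berge formula $\nu(H)=\min_S \tfrac12\bigl(|V(H)|+|S|-\mathrm{odd}(H-S)\bigr)$. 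In practice we will mostly use two special cases. First, $\nu(H)\le \lfloor |V(H)|/2\rfloor$. Second, if $H$ has an independent set $I$, then $\nu(H)\le |V(H)\setminus I|$.

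For the first range, $m\le n\le 5m/4-1$, we partition the vertex set into a bounded number of blocks and color each block and each pair of blocks monochromatically. The intended shape is as follows.
\begin{itemize}
\item Two or three red cliques of order about $2n-1$ or slightly less. Inside such a clique a vertex's red neighborhood has at most $2n-2$ vertices, so its red matching number is at most $n-1$.
\item One extra block $X$ of size about $m/2$, whose edges are blue.
\item Between the big cliques the edges are blue. The big cliques then form a blue complete multipartite graph, and its matching number is controlled by the block sizes.
\item Between $X$ and the cliques we choose colors so that a vertex's neighborhood is, in each color, either small (at most $2n-1$ or $2m-1$ vertices) or dominated by an independent set in that color. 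In the latter case the second bound above applies.
\end{itemize}
The block sizes will be tuned so that the total is $4n+m/2-1$. The constraint $n\le 5m/4-1$ should be exactly where the blue bound at the vertices of $X$ (or at the vertices adjacent to $X$ in blue) remains at most $m-1$.

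For the second range, $5m/4-1<n\le 3m/2-2$, we use the same template but rebalance it. Here the blue side becomes the binding constraint, so we aim for blocks of sizes roughly $2m-1$ (blue cliques), $2(n-m)$ and $m$, totalling $2n+3m-3$. The verification is again done vertex type by vertex type with the Tutte--Berge estimate. The diagonal statement follows by setting $m=n$ in the first case, which is allowed since $n\le 5n/4-1$ for $n\ge4$. Taking the construction on $\lceil 9n/2\rceil-1$ vertices (for odd $n$ the block of size $m/2$ is replaced by one of size $\lceil n/2\rceil$) then gives $R(F_n)\ge\lceil 9n/2\rceil$.

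The main obstacle is the choice of block sizes and inter-block colors so that every vertex type satisfies both matching constraints at once. The constraints are tight, since the totals are exactly $N-1$, and parity and rounding when $m$ or $n$ is odd can break a bound by one. So I would first write all the $\nu$-constraints as linear inequalities in the block sizes, then solve this small system to find the extremal sizes. I would check the boundary case $n=\lfloor 5m/4-1\rfloor$ separately, to confirm that the two constructions meet there and that the rounding works out.
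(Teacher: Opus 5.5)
There is a genuine gap. For a lower bound like this, the explicit coloring \emph{is} the proof, and your proposal never gives one. The block sizes, the colors between $X$ and the cliques, and the entire second-range construction are all deferred (``will be tuned'', ``we choose colors so that\dots''), so nothing is actually verified.

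Worse, the natural instantiation of your template cannot reach the bound. Take two disjoint red cliques $A,B$ of order $2n-1$ joined completely in blue. Let $x$ be any further vertex, with $r_A,r_B$ red and $b_A,b_B$ blue neighbors in $A,B$.
\begin{enumerate}
\item $N_R(x)$ contains two disjoint red cliques, so $r_A+r_B\le 2n$, hence $b_A+b_B\ge 2n-2$.
\item $N_B(x)$ contains a blue $K_{b_A,b_B}$, so $\min(b_A,b_B)\le m-1$. Thus $x$ has at least $2n-m\ge n$ red neighbors in one clique, say $A$.
\item Suppose two such vertices were both red to some $a\in A$. Together with the red clique $A\setminus\{a\}$ of order $2n-2$, they would give a red $nK_2$ in $N_R(a)$ (for $n\ge 3$).
\end{enumerate}
So these red neighborhoods in $A$ are pairwise disjoint, each of size at least $n>|A|/2$. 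Each clique therefore absorbs at most one extra vertex, and the coloring has at most $4n$ vertices, well short of $4n+\lceil m/2\rceil-1$.

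The missing idea is to use a single red clique $K$ of order $2n$, not $2n-1$. Its vertices have exactly $2n-1$ red neighbors, which is harmless. Join $K$ entirely in blue to the rest $W$. Then no Tutte--Berge analysis is needed; it suffices that:
\begin{enumerate}
\item every red degree inside $W$ is at most $2n-1$;
\item every blue degree inside $W$ is at most $m-1$ (a vertex $w\in W$ sees $K$ as a blue-independent set, so every blue edge in $N_B(w)$ meets $N_B(w)\cap W$);
\item the blue edges of $W$ have a vertex cover of size $m-1$ (a vertex of $K$ has blue neighborhood exactly $W$, so it then sees no blue $mK_2$).
\end{enumerate}

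The paper takes $W$ to be four red cliques $H_1,\dots,H_4$. Edges between $H_1\cup H_4$ and $H_2\cup H_3$ are red; edges between $H_1,H_4$ and between $H_2,H_3$ are blue. For even $m$ the sizes $(|H_1|,|H_2|,|H_3|,|H_4|)$ are $(m-1,\,2n-3m/2+1,\,m/2,\,m/2-1)$ in the first range and $(m-1,\,m-1,\,m/2,\,m/2-1)$ in the second.
\begin{enumerate}
\item $H_3\cup H_4$ covers all blue edges.
\item The blue-degree condition at $H_3$ is exactly $n\le 5m/4-1$ in the first range.
\item The red-degree condition at $H_1$ is what requires $n\ge 5m/4-1$ in the second range.
\end{enumerate}
This gives colorings on $4n+\lceil m/2\rceil-1$ and $2n+3m-3$ vertices respectively.
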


By comparing the lower bounds in Theorems \ref{chen-1} and \ref{thmfan}, one can see that the new bound increases by $n-m+5$ when $m \le n \le 5m/4-1$, and by $3m/2-n+3$ when $5m/4-1 \le n \le 3m/2-3$. 


\begin{theorem}\label{thmwheel}
  Assume that $n\ge 8$. If $n$ is even, then \[3n-2\le R(W_n)\le \min\{6n-10, (16n+1978)/3\}.\] If $n$ is odd, then \[(5n-6+\operatorname{sgn}(n))/2\le R(W_n)\le 4n+\min\{(n-9)/2, 660\},\] where
  \[
    \operatorname{sgn}(n) = \begin{cases}
    1 & \text{if } n \equiv 3 \pmod{4}, \\
    -1 & \text{if } n \equiv 1 \pmod{4}.
    \end{cases}
    \]
\end{theorem}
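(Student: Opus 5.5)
Theorem~\ref{thmwheel} consists of two lower-bound constructions and two upper-bound arguments, and I would handle them separately. Throughout, $W_n=K_1+C_{n-1}$. When $n$ is even the rim $C_{n-1}$ is an odd cycle; when $n$ is odd the rim is even.

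\textbf{Lower bound, $n$ even.} I need a 2-coloring of $K_{3n-3}$ with no monochromatic $W_n$. Split the vertex set into three parts $V_1,V_2,V_3$, two of size $n-1$ and one of size $n-1$ (total $3n-3$). Color edges inside each part red and edges between parts blue.
\begin{itemize}
\item Red: each red component has only $n-1$ vertices, fewer than the $n$ needed for $W_n$, so there is no red $W_n$.
\item Blue: the blue graph is complete tripartite. The neighborhood of any vertex is bipartite, so it contains no odd cycle, and hence no $C_{n-1}$.
\end{itemize}
This gives only $R(W_n)\ge 3n-2$, which is exactly the claimed bound. So the construction is just $3K_{n-1}$ against its complement, improving on Theorem~\ref{Mao-1} by one.

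\textbf{Lower bound, $n$ odd.} Since the rim is now even, the bipartite trick fails, and I would use a blow-up. Take a red graph that is a blow-up of $C_5$, or of a suitable small self-complementary structure, into parts of size about $(n-1)/2$. Color edges inside each part and along one cycle class red, and the rest blue. The neighborhoods then have restricted structure: a vertex's red neighborhood is its own part plus two adjacent parts. Containing $C_{n-1}$ in that neighborhood requires a Hamiltonian-type cycle, which parity or size prevents.

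The $\operatorname{sgn}(n)$ term suggests tuning the part sizes modulo 4 so that $|N(v)|$ falls just short, or has the wrong balance, for a $C_{n-1}$ in the relevant bipartite-like neighborhood. I would verify by counting: $5$ parts of sizes near $(n-1)/2$ give roughly $5n/2$ vertices, which matches the stated bound.

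\textbf{Upper bounds.} Take a coloring of $K_N$ and pick a vertex $v$ with red degree $d_R(v)\ge (N-1)/2$, by symmetry between the colors. Inside $N_R(v)$, either the red graph contains $C_{n-1}$, giving a red $W_n$ centered at $v$, or the red graph is $C_{n-1}$-free. In the latter case I would use cycle-Ramsey and Turán-type results inside the neighborhood to force a blue structure that extends to a blue wheel with a center chosen in $N_R(v)$.

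For the bounds $6n-10$ and $4n+(n-9)/2$, the key tools are:
\begin{itemize}
\item $R(C_{n-1},C_{n-1})$, which is $2n-3$ or $3n/2$-ish depending on parity.
\item Stability-type results for graphs without long cycles.
\end{itemize}
Concretely, I would find a vertex whose monochromatic neighborhood has size at least $R(C_{n-1},W_n\text{-center structure})$, and iterate.

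The constant-coefficient bounds $(16n+1978)/3$ and $4n+660$ look like they come from plugging in known asymptotic results on $R(K_1+C)$ or on cycles versus wheels, with explicit error terms. The upper bounds, particularly these sharper linear ones, are the main obstacle.
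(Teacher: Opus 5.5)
\textbf{Even lower bound.} Your lower bound for even $n$ is correct and coincides with the paper's argument ($3K_{n-1}$ against $K_{n-1,n-1,n-1}$).

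\textbf{Odd lower bound.} This construction fails as proposed. Write $n=2m+1$ and take the red graph $C_5[K_s]$ (red inside parts and between consecutive parts) with $s$ near $m$. For $v\in V_i$, the red neighbourhood $(V_i\setminus\{v\})\cup V_{i-1}\cup V_{i+1}$ induces the join $K_{s-1}+(K_s\cup K_s)$ on $3s-1$ vertices. For $s\ge 3$ this graph is pancyclic, and $3s-1\ge 2m$. So it contains a red $C_{n-1}$, and $v$ is the centre of a red $W_n$. There is no parity or Hamiltonicity obstruction at all, because the neighbourhood is built from cliques. Making the parts red-independent instead only moves the problem to the blue graph, which is then again $C_5[K_s]$. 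Your own count, $5s\approx 5m$ vertices, would beat the bound you are asked to prove, which should have been a warning sign.

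The paper instead uses $\widehat{K}_n\subseteq W_n$, so $R(W_n)\ge R(\widehat{K}_n)$, together with a colouring of a different shape. There is a red $K_{2m}$ all of whose edges to the remaining vertices are blue. The rest is coloured so that every red degree is at most $2m-1$, so no red centre exists. The blue graph outside $K_{2m}$ has maximum degree at most $m-1$ and all components of order less than $2m$. Then a blue centre inside $K_{2m}$ sees no blue $P_{2m}$. A blue centre outside sees a blue-independent $2m$-set plus at most $m-1$ further vertices, which cannot carry a path on $2m$ vertices. For $m$ even the red graph is $K_{2m}\cup K_{m,m-1,m-1}$ on $5m-2$ vertices. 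For $m$ odd it is a $(2m-1)$-regular graph on $5m-1$ vertices, assembled from four blocks $H_1,\dots,H_4$ with prescribed regular red/blue subgraphs. The $\operatorname{sgn}(n)$ term is exactly the parity obstruction to a $(2m-1)$-regular graph on the odd number $5m-1$ of vertices when $m$ is even; it is not a tuning of part sizes.

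\textbf{Upper bounds.} You have the right frame: some colour class at a vertex has size at least $\lceil (N-1)/2\rceil$, which yields $R(W_n)\le 2R(C_{n-1},W_n)$. But the proposal contains none of the content. Cycle--cycle Ramsey numbers alone do not suffice, because a blue $C_{n-1}$ inside the red neighbourhood of $v$ still needs a blue centre. What is missing is a bound on the off-diagonal number $R(C_{n-1},W_n)$; after that, a single pigeonhole step finishes, with no iteration.
\begin{itemize}
\item For even $n$ this is Lemma~\ref{Chenlemma}, $R(C_{n-1},W_n)=3n-5$, which gives $6n-10$ immediately.
\item For odd $n=2m+1$ the paper has to prove the new estimate $R(C_{2m},W_{2m+1})\le\min\{\lfloor 9m/2\rfloor,4m+332\}$ (Lemma~\ref{CycleWheelLemma}). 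If a minimum-degree vertex has at least $3m-1$ non-neighbours, apply $R(C_{2m},C_{2m})=3m-1$ there. Otherwise $\delta$ is large, the cases $\kappa\le 1$ and bipartite are handled directly, and in the $2$-connected nonbipartite case the Brandt--Faudree--Goddard weak-pancyclicity theorems plus Dirac's bound $c(G)\ge\min\{2\delta,|G|\}$ give a $C_{2m}$.
\item The bound $(16n+1978)/3$ is not a plug-in either. It rests on a structural proposition: with no monochromatic $W_{2m+2}$, every neighbourhood of size at least $(16m+1004)/3$ becomes, after deleting at most two vertices, three disjoint cliques of order at least $\lceil (4m+998)/3\rceil$ with no edges between them. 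A second high-degree vertex inside these cliques then forces a monochromatic $W_{2m+2}$; if there is none, one passes to the complement and repeats.
\end{itemize}
The additive constants $660$ and $1978$ come from the $+250$ in the Brandt--Faudree--Goddard condition $\delta\ge |G|/4+250$. As written, your proposal establishes only the even lower bound.
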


The above theorem is applicable for $n \ge 8$, as the exact values of $R(W_n)$ are already known for $3 \le n \le 7$. For further details, refer to the survey~\cite{Radziszowski2024}. Clearly, Theorem~\ref{thmwheel} greatly narrows the gap between the lower and upper bounds in Theorem~\ref{Mao-1}.

The study of diagonal Ramsey numbers for fans and wheels has motivated us to establish general bounds for the diagonal Ramsey number of another well-known graph, the kipas. Another main task is to establish the bounds for $R(\widehat{K}_n)$.

\begin{theorem}\label{thmkipas}
  Assume that $n\ge 5$. We have \[(5n-6+\operatorname{sgn}'(n))/2\le R(\widehat{K}_n)\le 4n-6,\] where
  \[
    \operatorname{sgn}'(n) = \begin{cases}
    1 & \text{if } n \equiv 3 \pmod{4}, \\
    0 & \text{if } n \equiv 0 \pmod{2}, \\
    -1 & \text{if } n \equiv 1 \pmod{4}.
    \end{cases}
    \]
\end{theorem}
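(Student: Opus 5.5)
The plan has two independent parts: a coloring of $K_{N-1}$ with $N=\lceil (5n-6+\operatorname{sgn}'(n))/2\rceil$ and no monochromatic $\widehat{K}_n$ for the lower bound, and a degree-plus-path argument for the upper bound. Throughout, a monochromatic $\widehat{K}_n=K_1+P_{n-1}$ is the same thing as a vertex $v$ together with a path on $n-1$ vertices, in the same color, inside the neighborhood of $v$ in that color.

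\emph{Lower bound.} It suffices to color $K_{N-1}$ so that, for every vertex $v$ and each color $c$, the $c$-colored graph induced on the $c$-neighborhood $N_c(v)$ has all components (or at least all longest paths) of order at most $n-2$.

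The first attempt will be a blow-up of the self-complementary $C_5$. Take parts $V_1,\dots,V_5$ of sizes about $(n-1)/2$. Color edges between consecutive parts red and between non-consecutive parts blue. Inside each part, use a small coloring chosen so that the red and blue neighborhoods are symmetric and do not merge into one large component.

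The quantities to check are as follows. For $v\in V_i$, the red neighborhood is $V_{i-1}\cup V_{i+1}$ together with the red neighbors of $v$ inside $V_i$. There are no red edges between $V_{i-1}$ and $V_{i+1}$, so a red path can pass from one side to the other only through red neighbors of $v$ in $V_i$. The inner coloring must therefore keep those connectors scarce, and the same must hold for blue.

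I would tune the part sizes separately according to $n \bmod 4$; odd and even part sizes interact with the parity of the inner regular colorings. This case split is where the correction term $\operatorname{sgn}'(n)$ should come from. Since $\widehat{K}_n\subseteq W_n$, the same coloring will also serve the wheel lower bound in Theorem~\ref{thmwheel}, which matches the common form $(5n-6+\operatorname{sgn})/2$.

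Finding inner colorings that actually achieve roughly $5n/2$ vertices is the first obstacle. Naive choices, such as a monochromatic inner clique or an inner bipartite/clique split, only give about $5n/3$ or $2n$ vertices. I therefore expect to need an explicit circulant-type coloring inside the parts.

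\emph{Upper bound.} Let $N=4n-6$ and fix a red/blue coloring of $K_N$. Each vertex has degree $4n-7$, so some vertex $v$ has, say, red degree at least $2n-3$. Let $H$ be its red neighborhood, with $|H|\ge 2n-3$. If the red graph on $H$ contains $P_{n-1}$, then $v$ is the center of a red $\widehat{K}_n$ and we are done.

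Otherwise the red graph on $H$ has no $P_{n-1}$. By the Erd\H{o}s--Gallai theorem it has at most $(n-3)|H|/2$ edges. Moreover, its structure is controlled: its components are small, or each long component has a small vertex cover by the Erd\H{o}s--Gallai/Kopylov stability results.

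The aim is then to find a blue $\widehat{K}_n$ inside $H$. Choose $u\in H$ of minimum red degree in $H$. Then $u$ has blue degree at least $|H|-1-(n-3)\ge n-1$ in $H$. Let $W$ be the blue neighborhood of $u$ in $H$. The red graph on $W$ still has no $P_{n-1}$, and what we need is a blue path on $n-1$ vertices in $W$.

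For that step I would use the complementary path-cover principle: if the red graph on $W$ has no long path, then its complement is nearly complete multipartite across red components. A Gallai--Milgram or Dirac-type argument on blue should then yield a blue path covering $n-1$ vertices, provided $|W|$ is large enough.

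This last step is the main obstacle, because $|W|\ge n-1$ is tight. I expect a case analysis to be needed. If some red component of $H$ is large, use Kopylov's structure to pick $u$ outside its small cover, which enlarges $W$. If all red components are small, the blue graph on $W$ contains a complete multipartite graph with small parts, and that graph is traceable directly.

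The slack in $4n-6$ versus $2n-3$ appears to be exactly what these counts require, so I would verify the boundary cases $|H|=2n-3$ and $|W|=n-1$ carefully.
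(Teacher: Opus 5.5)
Both halves of your proposal stop short of a proof.

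\emph{Upper bound.} Your opening matches the paper: take a vertex $v$ of red degree at least $2n-3$ and let $H$ be its red neighbourhood. The next step does not go through. Erd\H{o}s--Gallai only gives a vertex $u\in H$ of red degree at most $n-3$ in $H$, hence $|W|\ge n-1$, and that is far too small. To force a monochromatic $P_{n-1}$ in $W$ (red closes with $v$, blue with $u$), you need $|W|\ge R(P_{n-1},P_{n-1})=n-2+\lfloor (n-1)/2\rfloor$. The facts you record about $W$ do not suffice: a red $K_{n-3}\cup 2K_1$ on $n-1$ vertices has no red $P_{n-1}$, and its blue complement has no path on more than $5$ vertices. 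The Kopylov/Gallai--Milgram case analysis you propose is not carried out, and it is unnecessary.

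The missing idea is to split on $\delta(H)$ at $\lceil (n-1)/2\rceil$ rather than $n-3$.
\begin{itemize}
\item If $\delta(H)\le\lceil (n-1)/2\rceil-1$, a minimum-degree vertex of $H$ has at least $2n-3-\lceil (n-1)/2\rceil=R(P_{n-1},P_{n-1})$ non-neighbours in $H$, and Gerencs\'er--Gy\'arf\'as finishes.
\item If $\delta(H)\ge\lceil (n-1)/2\rceil$, Dirac's path lemma gives a red $P_{n-1}$ in any component of $H$ of order at least $n-1$. Otherwise $H$ has at least three components, each of order at least $\lceil (n+1)/2\rceil$. Taking $1$, $\lfloor (n-1)/2\rfloor$ and $\lceil (n-1)/2\rceil$ vertices from three of them gives a blue $K_{1,\lfloor (n-1)/2\rfloor,\lceil (n-1)/2\rceil}\supseteq\widehat{K}_n$.
\end{itemize}

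\emph{Lower bound.} You give no colouring, so this half is missing entirely, and the $C_5$ blow-up template looks doubtful. For $v\in V_i$, each inner neighbour of $v$ of a given colour is complete in that colour to the two flanking parts. Those parts already have about $n-1$ vertices together, and $v$ has roughly $n/4$ inner neighbours in one colour. These act as universal connectors, so the flanking parts must be nearly empty in that colour internally, hence nearly complete in the other. That recreates the same problem for their own vertices one step further around the $5$-cycle. It is not clear that any inner colouring escapes this for large $n$.

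The paper's extremal colourings have a different shape:
\begin{itemize}
\item for $n=2m+2$, red is $K_{2m+1}\cup K_{m,m,m}$, which is $2m$-regular, and its complement is $\overline{K}_{2m+1}$ joined to $3K_m$, in which no neighbourhood contains $P_{2m+1}$;
\item for $n=2m+1$ with $m$ even, red is $K_{2m}\cup K_{m,m-1,m-1}$;
\item for $m$ odd, a red $K_{2m}$ is joined in blue to four blocks $H_1,\dots,H_4$, with red between $H_1\cup H_4$ and $H_2\cup H_3$ and blue between $H_1,H_4$ and between $H_2,H_3$. The regular inner colourings are chosen by $m \bmod 8$ so that red is $(2m-1)$-regular and blue on $H_1\cup\dots\cup H_4$ is $(m-1)$-regular with every component of order less than $2m$.
\end{itemize}
Your parity intuition is right in spirit: the $\pm1$ in $\operatorname{sgn}'(n)$ comes from parity constraints on such regular colourings. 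For example, an $(m-1)$-regular graph on $3m-1$ vertices requires $m$ odd.
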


The final task of this paper is to  utilize the blow-up technique to establish the lower bound for some Ramsey numbers involving wheels. Before doing this, let us explain the concept of blow-up. 

Given two simple graphs $G$ and $H$, the blow-up graph $G[H]$ is formed by replacing each vertex of $G$ with a copy of $H$. In this blow-up graph, two vertices from different copies of $H$ are adjacent if and only if their corresponding original vertices in $G$ are adjacent. Referencing Figure~\ref{fig:blowup} will help in grasping this concept.

\begin{figure}[ht]
    \centering
\begin{tikzpicture}[scale=1, 
  vertex/.style={circle, draw, fill=white, inner sep=2.5pt},
  every label/.style={scale=0.8, black}]

  \node[vertex] (u1) at (-3, 4) {};
  \node[vertex] (u2) at (-5, 1) {};
  \node[vertex] (u3) at (-3, 1) {};
  \node[vertex] (u4) at (-1, 1) {};

  \draw (u1) -- (u2);
  \draw (u1) -- (u3);
  \draw (u1) -- (u4);

  \node[vertex] (u5) at (4, 4) {};
  \node[vertex] (u6) at (5, 4) {};
  \node[vertex] (u7) at (1, 1) {};
  \node[vertex] (u8) at (2, 1) {};
  \node[vertex] (u9) at (4, 1) {};
  \node[vertex] (u10) at (5, 1) {};
  \node[vertex] (u11) at (7, 1) {};
  \node[vertex] (u12) at (8, 1) {};

  \draw (u5) -- (u7);
  \draw (u5) -- (u8);
  \draw (u5) -- (u9);
  \draw (u5) -- (u10);
  \draw (u5) -- (u11);
  \draw (u5) -- (u12);
  \draw (u6) -- (u7);
  \draw (u6) -- (u8);
  \draw (u6) -- (u9);
  \draw (u6) -- (u10);
  \draw (u6) -- (u11);
  \draw (u6) -- (u12);
  \draw (u5) -- (u6);
  \draw (u7) -- (u8);
  \draw (u9) -- (u10);
  \draw (u11) -- (u12);
\end{tikzpicture}
\caption{The graph $K_{1,3}$ and its blow-up $K_{1,3}[K_2]$.}
\label{fig:blowup}
\end{figure}
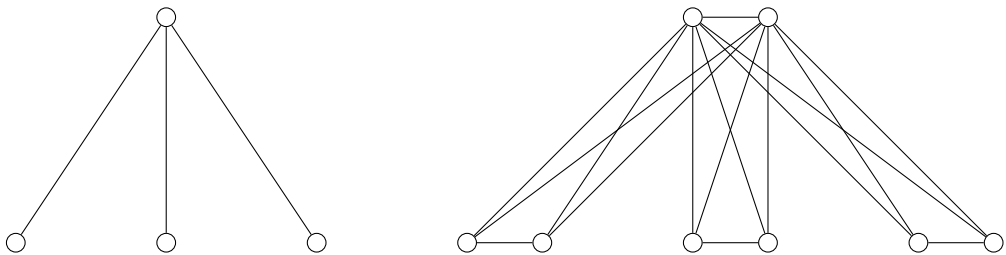

Using the blow-up technique, we can explicitly give the lower bound for $R(W_5, W_7)$, avoiding the help of computers.
\begin{theorem}\label{W5W7}
  $R(W_5, W_7) \ge 15$.
\end{theorem}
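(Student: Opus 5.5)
The plan is to exhibit an explicit red-blue coloring of $K_{14}$ with no red $W_5=K_1+C_4$ and no blue $W_7=K_1+C_6$. Following the blow-up idea just introduced, I would take the red graph to be $H[K_2]$ for a suitable graph $H$ on $7$ vertices. The blue graph is then its complement, which is $\overline{H}[\overline{K_2}]$. For $v$ in the blob of a vertex $x\in V(H)$, the first step is to describe the two neighborhoods of $v$.
\begin{itemize}
\item The red neighborhood of $v$ consists of its twin $v'$, which is joined to everything else in it, together with $N_H(x)[K_2]$.
\item The blue neighborhood of $v$ is $\overline{H}[\,V(H)\setminus N_H[x]\,]$ blown up by independent pairs.
\end{itemize}

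Next I translate the two forbidden wheels into conditions on $H$.
\begin{itemize}
\item \emph{No red $W_5$.} Since $v'$ is universal in the red neighborhood, any $P_3$ $a b c$ in $N_H(x)[K_2]$ gives a red $C_4$ $v' a b c$. Conversely, if $N_H(x)$ is independent in $H$, the red neighborhood is a friendship graph (triangles sharing $v'$), which contains no $C_4$. So it suffices that $H$ is triangle-free.
\item \emph{No blue $W_7$.} Let $S_x=V(H)\setminus N_H[x]$. I need $\overline{H}[S_x]$, blown up by independent pairs, to contain no $C_6$. A blown-up $P_3$ is $K_{2,4}$, whose smaller side has only two vertices, so it has no $C_6$. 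Disjoint stars are likewise harmless. A blown-up $P_4$ or $K_3$, however, does contain a $C_6$, for example $a_1b_1c_1d_1c_2b_2a_1$. So the condition is that for every $x$, the graph $\overline{H}[S_x]$ contains neither $P_4$ nor $K_3$.
\end{itemize}

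The main obstacle is finding $H$. The obvious candidates fail.
\begin{itemize}
\item $H=C_7$ fails because $\overline{H}[S_x]$ is a $P_4$.
\item $K_{3,4}$ fails because a triangle appears in the complement.
\item Adding a single chord to $C_7$ fails at a vertex of degree $2$ whose non-neighborhood still induces a $P_4$ in $H$.
\end{itemize}
I would first try triangle-free graphs on $7$ vertices with degrees $2$ and $3$, obtained by adding two or three chords to $C_7$. For each candidate I would check the $P_4$/$K_3$ condition vertex by vertex, using the symmetry of $H$ to cut the work.

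If no balanced blow-up works, I would relax the construction in two ways: blow up different vertices of $H$ by $K_1$ or $K_2$ (or $\overline{K_2}$) with total order $14$, or swap the roles of the colors. The same two neighborhood criteria then need re-deriving, since a vertex blown up by $K_1$ has no twin. Once a valid $H$ is found, the verification is a finite check over the vertex classes of $H$ and gives $R(W_5,W_7)\ge 15$.
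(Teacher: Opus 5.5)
\paragraph{Verdict.} Your reduction is sound and is the same as the paper's. The red graph is $H[K_2]$ with $H$ triangle-free on seven vertices. Your criterion is exactly the right one: for every $x$, $\overline{H}[V(H)\setminus N_H[x]]$ must contain no $P_4$ and no $K_3$, i.e.\ be a disjoint union of stars. This is necessary and sufficient for the blue graph $\overline{H}[\overline{K_2}]$ to have no $W_7$.

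\paragraph{The gap.} You never produce $H$. The statement is an existence claim, and the whole content of its proof is the witness. A plan to search $C_7$ plus two or three chords, with a fallback construction if that fails, does not establish $R(W_5,W_7)\ge 15$. As written, your proposal does not show that a suitable $H$ exists, and your hedge (``If no balanced blow-up works\ldots'') concedes this.

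\paragraph{How to close it.} The missing step lies inside your own search space. Take $H$ to be the $7$-cycle $v_1v_2\cdots v_7v_1$ with the three chords $v_2v_5$, $v_3v_6$, $v_4v_7$; this is the paper's choice. It is triangle-free. Writing $S_x=V(H)\setminus N_H[x]$:
\begin{itemize}
\item $\overline{H}[S_{v_1}]$ is the matching $\{v_3v_5,\,v_4v_6\}$.
\item $\overline{H}[S_{v_2}]$ is the edge $v_4v_6$ plus the isolated vertex $v_7$.
\item $\overline{H}[S_{v_7}]$ is the edge $v_3v_5$ plus the isolated vertex $v_2$.
\item For $x=v_3,v_4,v_5,v_6$, $\overline{H}[S_x]$ is a path on three vertices, namely $v_1v_5v_7$, $v_1v_6v_2$, $v_1v_3v_7$, $v_1v_4v_2$ respectively.
\end{itemize}
All of these are disjoint unions of stars. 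Hence every blue neighborhood is either $K_{2,4}$ or has all components of order at most four, which matches the paper's verification. Adding this graph and this vertex-by-vertex check turns your proposal into a complete proof.
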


The lower and upper bounds for $R(W_5, W_7) = 15$ were obtained by Wesley~\cite{Wesley2023} and Van Overberghe (see in~\cite{Radziszowski2024}), respectively, followed by an independent proof from Lidick\'y, McKinley, and Pfender~\cite{Lidicky2024}. Wesley employed the SAT method to establish the lower bound, while the latter authors used tabu search for the same purpose.

This blow-up technique not only shows that $R(W_5, W_7) \ge 15$, but also yields many new lower bounds for $R(W_m, K_n)$. For instance, the Ramsey number $R(W_5, K_6) \ge 33$ was established by Shao, Wang, and Xiao~\cite{Shao2009} using a simulated annealing algorithm, and $R(W_6, K_6) \ge 34$ was provided by Goedgebeur and Van Overberghe~\cite{Goedgebeur2022} by checking block-circulant graphs. Our technique improves both of these lower bounds to $35$. The full theorem is stated as follows.

\begin{theorem}\label{WheelClique}
  For $5 \le m \le 6$ and all positive integers $n$, we have
  \[R(W_m, K_n) \ge 2R(K_3, K_n)-1\ \text{and}\ R(W_7, K_n)\ge 2R(K_4-e, K_n)-1\,.\]
  In particular, for $5 \le n \le 15$, the lower bounds of $R(W_m, K_n)$ are given in Table~\ref{W5W6Kn}; and for $5 \le n \le 10$, the lower bounds of $R(W_7, K_n)$ are given in Table~\ref{W7Kn}.
\end{theorem}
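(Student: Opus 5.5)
The plan is to use the blow-up $G[K_2]$ of an extremal graph. Let $N=R(K_3,K_n)-1$ and fix a graph $G$ on $N$ vertices that is triangle-free and has independence number less than $n$. Such a graph exists by the definition of $R(K_3,K_n)$. Colour the edges of $K_{2N}$ by letting the red graph be $G[K_2]$ and the blue graph be its complement. This gives a colouring of $K_{2R(K_3,K_n)-2}$, which yields the first bound once we show there is no red $W_m$ and no blue $K_n$.

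The blue side is easy. A blue clique contains at most one vertex from each copy of $K_2$, because the two vertices of a copy are joined in red. Its vertices must also project to an independent set of $G$. Hence a blue clique has at most $\alpha(G)\le n-1$ vertices, and there is no blue $K_n$.

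The red side is the main step. I need $G[K_2]$ to contain no $W_m$ for $m=5,6$, that is, no $K_1+C_4$ and no $K_1+C_5$. Take a hub $v$ in the copy of $x\in V(G)$. The red neighbourhood of $v$ is its twin $x'$ together with both copies of every vertex of $N_G(x)$. Since $G$ is triangle-free, $N_G(x)$ is independent in $G$. So the red graph induced on this neighbourhood consists of $x'$ joined to everything, plus the $K_2$'s coming from the vertices of $N_G(x)$, which are pairwise nonadjacent. This is $K_1+tK_2$, a fan-like graph. Removing $x'$ leaves a graph whose components have two vertices, so every cycle in the neighbourhood must pass through $x'$. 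A cycle through a single cut vertex of this kind, with all other components of size 2, is a triangle. Therefore the neighbourhood contains no cycle of length at least 4, and $G[K_2]$ contains no $W_m$ for any $m\ge 5$. In particular there is no red $W_5$ or $W_6$, which proves the first inequality.

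For $W_7=K_1+C_6$, let $G$ be a graph on $R(K_4-e,K_n)-1$ vertices with no $K_4-e$ and $\alpha(G)<n$, and again use $G[K_2]$. The blue side is handled exactly as before. For the red side, having no $K_4-e$ means that every edge lies in at most one triangle, so $G[N_G(x)]$ is a matching $M$ together with isolated vertices. The red neighbourhood of a hub is then $x'$ joined to everything, plus a disjoint union of components. Each component is either $K_2$, coming from an isolated vertex of $N_G(x)$, or $K_2[K_2]=K_4$, coming from an edge of $M$. Removing $x'$ leaves components of order at most 4, so any cycle avoiding $x'$ has length at most 4. A cycle through $x'$ must lie within $x'$ plus a single component, since $x'$ is a cut vertex and a cycle cannot pass through it twice, so it has length at most 5. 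Hence there is no $C_6$ in any neighbourhood, and no red $W_7$. This gives the second bound.

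The table entries then follow by substituting the known lower bounds for $R(K_3,K_n)$ and $R(K_4-e,K_n)$ from the survey~\cite{Radziszowski2024}; this bookkeeping is routine. I expect the main obstacle to be only the careful neighbourhood analysis above, in particular justifying that cycles through the cut vertex $x'$ stay within one component.
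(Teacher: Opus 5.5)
Your proposal is correct and follows the paper's proof. The paper also colours by the blow-up $G[K_2]$ of a $(K_3,K_n)$- or $(K_4-e,K_n)$-Ramsey graph, bounds blue cliques by $\alpha(G)\le n-1$, and excludes red wheels by examining a hub's neighbourhood. The only difference is cosmetic: you rule out the rim cycle inside the open neighbourhood (the fan $K_1+tK_2$, resp.\ $x'$ joined to copies of $K_2$ and $K_4$) by a cut-vertex argument, while the paper says the closed neighbourhood contains no 3-connected subgraph on at least five (resp.\ seven) vertices; your version is, if anything, more explicit.
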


\begin{table}[ht]
  \centering
  \begin{tabular}{c ccccccccccc}
    \toprule
    The value of $n$ & 5 & 6 & 7 & 8 & 9 & 10 & 11 & 12 & 13 & 14 & 15 \\
    \midrule
    Lower bound & 27 & 35 & 45 & 55 & 71 & 79 & 93 & 105 & 119 & 133 & 147 \\
    \bottomrule
  \end{tabular}
  \caption{Lower bounds of $R(W_5, K_n)$ and $R(W_6, K_n)$ for $5 \le n \le 15$.}
  \label{W5W6Kn}
\end{table}

\begin{table}[ht]
  \centering
  \begin{tabular}{c ccccccc}
    \toprule
    The value of $n$ & 5 & 6 & 7 & 8 & 9 & 10 \\
    \midrule
    Lower bound & 31 & 41 & 55 & 71 & 81 & 97 \\
    \bottomrule
  \end{tabular}
  \caption{Lower bounds of $R(W_7, K_n)$ for $5 \le n \le 10$.}
  \label{W7Kn}
\end{table}

Somewhat surprisingly, by examining small Ramsey numbers~\cite{Radziszowski2024}, we find that for $1 \le n \le 5$, the lower bounds given for $R(W_5, K_n)$ are tight; and for $1 \le n \le 4$, the lower bounds provided for $R(W_7, K_n)$ are also tight. This leads us to boldly propose the following conjecture.

\begin{conjecture}
    For all positive integers $n$, we have \[R(W_5, K_n)=2R(K_3, K_n)-1\ \text{and}\ R(W_7, K_n)=2R(K_4-e, K_n)-1\,.\]
\end{conjecture}

The rest of this paper is organized as follows. Section~\ref{Secfans} focuses on the lower bound for Ramsey number of fans. The proofs of Theorems~\ref{thmwheel} and \ref{thmkipas} will be presented in Sections~\ref{Secwheels} and \ref{Seckipas}, respectively. 
Section~\ref{Secwheelscliques} is devoted to the proofs of Theorems~\ref{W5W7} and~\ref{WheelClique}.

We conclude this section by introducing additional notation as follows. For a graph $G$, let $V(G)$ and $E(G)$ denote the vertex set and edge set of $G$, respectively. The complement of $G$ is denoted by $\overline{G}$. The symbols $|G|$, $\delta(G)$, $\Delta(G)$, and $\kappa(G)$ represent the order (number of vertices), minimum degree, maximum degree, and connectivity of $G$, respectively. The set of neighbors of a vertex $u$ is denoted by $N(u)$, and $N[u]=N(u)\cup\{u\}$. The graph $G\cup H$ is the disjoint union of $G$ and $H$. For $U\subseteq V(G)$, $G-U$ denotes the graph obtained from $G$ by deleting the vertex set $U$ and all edges incident to $U$. In particular, when $U=\{u\}$, $G-U$ is simply written as $G-u$.

\section{Ramsey numbers of fans}\label{Secfans}

\begin{proof}[\textbf{\textit{Proof of Theorem~\ref{thmfan}}}]
We first construct the required extremal graph, as illustrated in Figure~\ref{fig:main}. In the complete graph $G$, there are five disjoint red complete subgraphs: $H_1$, $H_2$, $H_3$, $H_4$, and $K_{2n}$. The edges between the red $K_{2n}$ and its external vertices are all blue. The edges between the subgraphs $H_1 \cup H_4$ and $H_2 \cup H_3$ are all red. Additionally, all edges between $H_1$ and $H_4$ are blue, and all edges between $H_2$ and $H_3$ are also blue. The number of vertices in each $H_i$ depends on the specific case.

\vskip 5mm
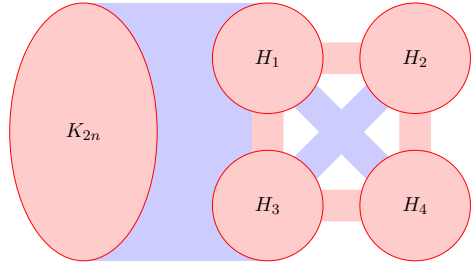
\begin{figure}[ht]
  \centering
  \scalebox{0.7}{
  \begin{tikzpicture}[scale=0.7, 
    vertex/.style={circle, draw, fill=white, inner sep=2pt},
    every label/.style={scale=0.8, black}]
  
    \fill[blue!20] (0, -3.5) rectangle (5,3.5);
  
    \draw[red!20, line width=6mm] (5, 2) -- (9, 2); 
    \draw[red!20, line width=6mm] (5, 2) -- (5, -2); 
    \draw[red!20, line width=6mm] (9, -2) -- (9, 2); 
    \draw[red!20, line width=6mm] (9, -2) -- (5, -2); 
    \draw[blue!20, line width=6mm] (5, 2) -- (9, -2); 
    \draw[blue!20, line width=6mm] (5, -2) -- (9, 2); 
  
    \draw[red, fill=red!20] (0, 0) ellipse (2cm and 3.5cm);
    \node at (0, 0) {$K_{2n}$};
  
    \draw[red, fill=red!20] (5, 2) circle (1.5cm);
    \node at (5, 2) {$H_1$};
  
    \draw[red, fill=red!20] (9, 2) circle (1.5cm);
    \node at (9, 2) {$H_2$};
  
    \draw[red, fill=red!20] (5, -2) circle (1.5cm);
    \node at (5, -2) {$H_3$};
  
    \draw[red, fill=red!20] (9, -2) circle (1.5cm);
    \node at (9, -2) {$H_4$};  
  \end{tikzpicture}}
\caption{The extremal graph $G$}
\label{fig:main}
\end{figure}

When $m \le n \le 5m/4-1$ and $m$ is even, let $|H_1| = m-1$, $|H_2| = 2n-3m/2+1$, $|H_3| = m/2$, and $|H_4| = m/2-1$.

When $m \le n \le 5m/4-1$ and $m$ is odd, let $|H_1| = m-1$, $|H_2| = 2n-3(m-1)/2$, $|H_3| = (m-1)/2$, and $|H_4| = (m-1)/2$.

When $5m/4-1 < n \le 3m/2-2$ and $m$ is even, let $|H_1| = m-1$, $|H_2| = m-1$, $|H_3| = m/2$, and $|H_4| = m/2-1$.

When $5m/4-1 < n \le 3m/2-2$ and $m$ is odd, let $|H_1| = m-1$, $|H_2| = m-1$, $|H_3| = (m-1)/2$, and $|H_4| = (m-1)/2$.

In each case, it can be verified that every vertex is adjacent to at most $2n-1$ red edges. Hence, the graph $G$ does not contain a red $F_n$ as a subgraph.

We proceed by contradiction, assuming that $G$ contains a blue $F_m$ as a subgraph. If the center vertex of this blue $F_m$ is in the red $K_{2n}$, then there must exist a matching consisting of $m$ blue edges in the subgraph $G-V(K_{2n})$. However, the vertex set $V(H_3) \cup V(H_4)$ forms a vertex cover for all blue edges in $G-V(K_{2n})$, and $|H_3| + |H_4| = m-1$. Thus, there cannot be a blue matching of $mK_2$ in $G-V(K_{2n})$. This contradiction implies that the center of the blue $F_m$ cannot be in the red $K_{2n}$.

Since each independent set of an $F_m$ contains at most $m$ vertices, the center vertex and at least $m$ non-center vertices of the blue $F_m$ must be in the subgraph $G-V(K_{2n})$. This implies that there is a vertex of $V(G)\setminus V(K_{2n})$ that is adjacent to at least $m$ blue edges in $G-V(K_{2n})$. However, it can be easily verified that in the subgraph $G-V(K_{2n})$, every vertex is adjacent to at most $m-1$ blue edges. This contradiction shows that no blue $F_m$ exists in $G$.

When $m \le n \le 5m/4-1$, the order of $G$ is given by $4n+\left\lceil m/2 \right\rceil -1$. When $5m/4-1<n \le 3m/2-2$, the order of $G$ becomes $2n+3m-3$. This completes the proof of the theorem.
\end{proof}  

\section{Ramsey numbers of wheels}\label{Secwheels} 
Before giving the proof of Theorem \ref{thmwheel}, as a preparation, we need introduce additional concepts and some known results.

A graph $G$ is said to be \emph{pancyclic} if it contains a cycle of every length from $3$ to $|G|$, and \emph{weakly pancyclic} if it contains a cycle of every length from the girth to the circumference. Moreover, we denote the circumference and girth of a graph $G$ by $c(G)$ and $g(G)$, respectively. Below we provide one condition for a graph to be pancyclic, two conditions for a graph to be weakly pancyclic, and one condition for the longest cycle in a graph. These conditions are very useful when searching for a cycle of a given length.
\begin{lemma}[Chen, Cheng, Ng, and Zhang~\cite{Chen2012}]\label{Chenlemma}
  $R(C_n,W_m)=3n-2$ for even $m$, $n \ge m-1 \ge 3$, and $(n,m) \neq (3,4)$.
\end{lemma}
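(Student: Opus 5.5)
The statement is quoted from Chen, Cheng, Ng, and Zhang~\cite{Chen2012}. The plan is to prove it in two halves: a lower bound from an explicit coloring, and an upper bound by a degree argument followed by a structural analysis of the red graph. Throughout, recall that $W_m$ has order $m$, so for even $m$ its rim is the odd cycle $C_{m-1}$ with $m-1\le n$.

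For the lower bound $R(C_n,W_m)\ge 3n-2$, I would color $K_{3n-3}$ as three disjoint red copies of $K_{n-1}$, with every edge between different copies blue. The red graph has no component of order $n$, so it contains no red $C_n$. The blue graph is the complete tripartite graph $K_{n-1,n-1,n-1}$. In any blue wheel, the hub lies in one part and the rim lies in the other two parts. Those two parts span a bipartite graph, which contains no odd cycle. Since the rim $C_{m-1}$ is odd, there is no blue $W_m$.

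For the upper bound, take any red/blue coloring of $K_{3n-2}$ with no red $C_n$; the goal is a blue $W_m$.

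\textbf{Step 1 (degree reduction).} I will use the classical result $R(C_n,C_k)=2n-1$ for odd $k\le n$ with $(n,k)\ne(3,3)$ (Bondy--Erd\H{o}s, Faudree--Schelp, Rosta), applied with $k=m-1$. The excluded pair corresponds exactly to $(n,m)=(3,4)$. Suppose some vertex $v$ has blue degree at least $2n-1$. Since there is no red $C_n$, its blue neighbourhood contains a blue $C_{m-1}$, and together with $v$ this gives a blue $W_m$. So I may assume every vertex has blue degree at most $2n-2$, and hence red degree at least $n-1$. The red graph $R$ therefore has order $3n-2$ and $\delta(R)\ge n-1$.

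\textbf{Step 2 (structure of $R$).} Each red component has at least $n$ vertices, so there are at most three components. I would then split into cases on the blocks of $R$.
\begin{itemize}
\item If some $2$-connected piece $B$ of $R$ has $|B|\ge n$ and large minimum degree, then Dirac/Bondy-type circumference bounds give $c(B)\ge \min\{|B|,2\delta\}\ge n$. Weak pancyclicity results of Brandt--Faudree--Goddard type then yield cycles of every length from the girth up to $c(B)$. Since $B$ is dense, the girth is small (in fact $3$ or $4$), so $B$ contains a red $C_n$, a contradiction.
\item Otherwise $R$ is forced to be nearly a disjoint union of at most three dense pieces, possibly glued at cut vertices. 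These pieces have sizes about $n-1$ each, and one piece must absorb the extra vertex. Between the pieces almost all edges are blue. Here I would build a blue $W_m$ directly: take the hub in the smallest piece, and form an odd rim using the blue bipartite graph between the other two pieces together with one blue edge inside a piece. Such a blue edge must exist, since a piece with $n$ vertices and no red $C_n$ cannot be a red clique.
\end{itemize}

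The main obstacle is Step 2. The difficulty is controlling red graphs with $\delta\ge n-1$ on $3n-2$ vertices that avoid $C_n$ while not splitting cleanly into components, for example pieces sharing cut vertices or near-bipartite dense pieces (where a weakly pancyclic graph may have girth $4$ and miss odd lengths). Parity of $n$ matters in these cases. I would first handle the bipartite-like red configurations: there, the blue graph inside each part is nearly complete, which supplies the odd rim. Only after that would I treat the cut-vertex configurations, by counting blue degrees of the shared vertices.
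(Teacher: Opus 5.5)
The paper does not prove this lemma; it is quoted from Chen, Cheng, Ng, and Zhang, so your proposal has to stand on its own. Your lower-bound colouring and Step~1 are correct. Step~2, however, carries the entire upper bound, and it is only a sketch whose structural picture is off.

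Since $\delta(R)\ge n-1$ and $|R|=3n-2$, every component and every end block of $R$ has at least $n+1$ vertices, because order exactly $n$ would force a red $K_n$. So there are at most two components (not three, as you wrote). If $R$ is connected but not $2$-connected, it has exactly two end blocks. Both situations are dispatched by Lemma~\ref{BondyLemma} and Dirac's theorem; for example, each of two components has order at most $2n-3<2\delta(R)$, so it is pancyclic.

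The configuration of ``three red pieces of order about $n-1$'' is the extremal one on $3n-3$ vertices, and it does not arise here. A vertex with two red neighbours in a red $K_{n-1}$ closes a red $C_n$, so an added vertex has red degree at most $3<n-1$ into three such pieces. Finally, if $R$ is bipartite, one side has at least $\lceil(3n-2)/2\rceil\ge m$ vertices, which gives a blue $K_m$.

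The genuine gap is the remaining case, which your first bullet silently assumes away: $R$ is $2$-connected and nonbipartite. Lemma~\ref{Diraclemma} gives $c(R)\ge 2n-2\ge n$. But Lemma~\ref{Brandt1lemma} needs $\delta(R)\ge(|R|+2)/3=n$, which is one more than Step~1 provides. Lemma~\ref{Brandt2lemma} helps only when $n-1\ge(3n-2)/4+250$, i.e.\ $n\ge 1002$. So for $n\le 1001$, nothing in your plan handles a vertex $v$ of red degree exactly $n-1$, equivalently of blue degree $2n-2=R(C_n,C_{m-1})-1$. This boundary case is where the real difficulty of the theorem lies. Your hub-and-rim construction presupposes a structure that is never derived from the hypotheses.

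You need an additional idea here. One option is a stability argument: show that the blue neighbourhood of such a $v$, being a critical colouring for $(C_n,C_{m-1})$, must be close to two red $K_{n-1}$'s joined in blue. Then show that the $n-1$ red neighbours of $v$ cannot all reach red degree $n-1$ without closing a red $C_n$. Another option is to prove a version of Lemma~\ref{Brandt1lemma} at $\delta=(|R|-1)/3$ adapted to this setting. (Also, girth $4$ is not an obstruction: a weakly pancyclic graph of girth $4$ has every cycle length from $4$ up to its circumference.)
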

\begin{lemma}[Bondy~\cite{Bondy1971}]\label{BondyLemma}
  Let $G$ be a graph of order $n$. If $\delta(G) \ge n/2$, then either $G$ is pancyclic or $n$ is even and $G = K_{n/2, n/2}$.
\end{lemma}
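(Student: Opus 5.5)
The plan is to argue in two stages: first get a Hamiltonian cycle, then use it as a scaffold to produce every cycle length $\ell$ with $3\le \ell\le n-1$. \textbf{Stage 1.} Since $\delta(G)\ge n/2$, Dirac's theorem gives a Hamiltonian cycle $C=v_1v_2\cdots v_nv_1$, with indices taken mod $n$. The same hypothesis also gives $|E(G)|\ge n^2/4$. So it suffices to prove Bondy's edge version: a Hamiltonian graph with at least $n^2/4$ edges is pancyclic or equals $K_{n/2,n/2}$. In our situation we can work directly with the degree bound $d(v_i)+d(v_{i+1})\ge n$, which is slightly more convenient.

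\textbf{Stage 2 (excluding a missing length).} Suppose there is no cycle of length $k$ for some $4\le k\le n-1$; the case $k=3$ is handled at the end. Fix a consecutive pair $v_i,v_{i+1}$ on $C$. Suppose $v_i\sim v_j$ and $v_{i+1}\sim v_{j+k-3}$, where $v_j,\dots,v_{j+k-3}$ is a segment of $C$ avoiding $v_i,v_{i+1}$. Then
\[
v_i\,v_j\,v_{j+1}\cdots v_{j+k-3}\,v_{i+1}\,v_i
\]
is a cycle of length $(k-3)+3=k$, which is a contradiction. Hence the shift of $N(v_i)$ by $k-3$ positions along $C$ is disjoint from $N(v_{i+1})$, apart from boundedly many indices near $i$ where the segment wraps through $v_i,v_{i+1}$. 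A short count of these exceptional indices should give $d(v_i)+d(v_{i+1})\le n$, with any strict slack forcing strict inequality. Combined with $d(v_i)+d(v_{i+1})\ge n$, we get equality everywhere: each $d(v_i)=n/2$ (so $n$ is even), and every "forbidden" position is exactly complementary to a neighbour. I will first carry this out for the pair $v_1,v_2$ and check carefully which indices $j$ are excluded by wraparound.

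\textbf{Stage 3 (equality structure).} Equality forces $N(v_{i+1})$ to be precisely the complement of $N(v_i)+(k-3)$ for every $i$. Iterating this around $C$, I expect to deduce that each $N(v_i)$ consists exactly of the vertices at odd distance from $v_i$ on $C$, i.e.\ $G=K_{n/2,n/2}$ with the bipartition given by alternate vertices of $C$. Such a $G$ has all even lengths, which is consistent only when the missing $k$ is odd.

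\textbf{The case $k=3$.} If $G$ is triangle-free, then for any edge $uv$ the sets $N(u)$ and $N(v)$ are disjoint. Hence $n\ge d(u)+d(v)\ge n$, so equality holds and every vertex is adjacent to exactly one of $u,v$. This makes $G$ complete bipartite with parts of size $n/2$.

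The main obstacle will be Stage 2/3: the boundary bookkeeping in the shifted-neighbourhood count, making sure the exceptional indices cost nothing. The relevant indices are those $j$ where the segment $v_j\cdots v_{j+k-3}$ would contain $v_i$ or $v_{i+1}$. If the naive count loses a constant, my fallback is Bondy's original approach: sum the inequality over all $i$ to get $|E|\le n^2/4$, with equality analysed via the alternating structure.
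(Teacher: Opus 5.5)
The paper does not prove this lemma (it is quoted from Bondy), so I am judging your argument on its own. Stage 1 and your triangle-free case are fine. The gap is in Stage 2, and it is not a matter of boundary bookkeeping.

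Relabel $C$ as $w_1w_2\cdots w_nw_1$ with $w_1=v_{i+1}$, $w_n=v_i$ and $w_t=v_{i+t}$. Your cycles arise exactly from $w_n\sim w_b$ and $w_1\sim w_{b+k-3}$ with $2\le b\le n-k+2$. So they only pair $N(w_n)\cap\{w_2,\dots,w_{n-k+2}\}$ against $N(w_1)\cap\{w_{k-1},\dots,w_{n-1}\}$, and they leave $k-2$ positions unconstrained on each side. The best this yields is $d(w_1)+d(w_n)\le (n-k+1)+2(k-2)=n+k-3$. This bound is sharp for your constraint system: $N(w_1)=V\setminus\{w_1\}$ and $N(w_n)=\{w_1,w_{n-k+3},\dots,w_{n-1}\}$ trigger none of your forbidden pairs, yet have degree sum $n+k-3>n$ for $k\ge 4$. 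The loss $k-3$ is unbounded. Your fallback inherits it, since summing over $i$ gives only $|E|\le n(n+k-3)/4$.

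What is missing is that a $k$-cycle through the edge $w_1w_n$ can close up in two essentially different ways, and you need both so that the forbidden positions tile the cycle.
\begin{enumerate}
\item[(i)] $w_1\sim w_a$ and $w_n\sim w_{a+k-3}$ for $2\le a\le n-k+2$. This gives the cycle $w_1w_a\cdots w_{a+k-3}w_nw_1$.
\item[(ii)] $w_1\sim w_a$ and $w_n\sim w_b$ with $b=a-(n-k+1)$, for $n-k+2\le a\le n$. This gives the outer cycle $w_1w_2\cdots w_bw_nw_{n-1}\cdots w_aw_1$, which skips the $n-k$ vertices $w_{b+1},\dots,w_{a-1}$.
\end{enumerate}
Note that $w_{n-k+2}\notin N(w_1)$, since otherwise there is a direct $k$-cycle. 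Hence these two shifts map $N(w_1)$ into $\{w_2,\dots,w_{n-2}\}$ with a single collision ($w_2$ and $w_n$ both go to $w_{k-1}$), and the image avoids $N(w_n)$. This gives $d(w_1)+d(w_n)\le n$ with no loss, which is Bondy's lemma. It follows that $G$ is $n/2$-regular.

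Stage 3 is then still unproved. The equality pattern you intend to iterate, ``$N(v_{i+1})$ is the complement of $N(v_i)+(k-3)$,'' is an artifact of the faulty count; the true pattern uses shift $k-3$ on one range and $k-1$ on the other. You still need an actual argument, not ``I expect to deduce,'' to show that $G$ is bipartite. Bipartiteness together with $n/2$-regularity would give $K_{n/2,n/2}$.
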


\begin{lemma}[Brandt, Faudree, and Goddard~\cite{Brandt1998}]\label{Brandt1lemma}
  Every nonbipartite graph $G$ of order $n$ with $\delta(G) \ge (n+2)/3$ is weakly pancyclic with $g(G)=3$ or $4$.
\end{lemma}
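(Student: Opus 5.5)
This statement is a cited result of Brandt, Faudree, and Goddard, so in the paper it is simply invoked. If I had to reprove it, I would split it into the girth claim and the weak pancyclicity claim, handling the girth claim first because it is elementary.

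\textbf{Girth.} Suppose $g(G)\ge 5$ and fix a vertex $v$.
\begin{enumerate}
\item Since there are no triangles, the neighbours of $v$ are pairwise non-adjacent.
\item Since there are no $4$-cycles, distinct neighbours $u,u'$ of $v$ have no common neighbour other than $v$.
\item Hence $\{v\}$, $N(v)$, and the sets $N(u)\setminus\{v\}$ for $u\in N(v)$ are pairwise disjoint, which gives $1+\delta+\delta(\delta-1)\le n$, that is, $\delta^2+1\le n$.
\item With $\delta\ge (n+2)/3$ this would force $(n+2)^2/9+1\le n$, i.e.\ $n^2-5n+13\le 0$. This is impossible, because the discriminant $25-52$ is negative.
\end{enumerate}
So $g(G)\in\{3,4\}$, and nonbipartiteness is not even needed here.

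\textbf{Weak pancyclicity.} I would show that cycle lengths have no gaps between $g(G)$ and $c(G)$ by a downward induction: whenever $G$ has a cycle $C$ of length $k>g(G)$, it also has a cycle of length $k-1$. The tools are chords and outside vertices of $C$.
\begin{enumerate}
\item \emph{Outside vertices.} If a vertex $x\notin C$ has neighbours $c_i,c_{i+2}$ on $C$, then replacing $c_{i+1}$ by $x$ keeps the length. If $x$ has neighbours $c_i,c_{i+1}$, we get a cycle of length $k+1$.
\item \emph{Chords.} A chord $c_ic_{i+2}$ immediately yields a cycle of length $k-1$.
\item \emph{Counting.} Since $\delta\ge (n+2)/3$, every vertex sends about a third of the vertices as neighbours. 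Counting edges between $C$ and $V(G)\setminus V(C)$, and chords of $C$, shows that some configuration of the above kind occurs unless the neighbourhoods along $C$ are forced into a rigid periodic pattern. In that pattern, neighbours of $c_i$ on $C$ sit only at positions of a fixed parity, or more generally in a fixed residue class modulo $3$.
\item \emph{Rigid case.} Such a pattern forces either a bipartite structure on the relevant part of $G$, contradicting nonbipartiteness, or a blow-up structure in which the needed cycle lengths can be found directly.
\end{enumerate}

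\textbf{Main obstacle: the triangle-free case $g=4$.} Here chords of type $c_ic_{i+2}$ are impossible, so odd lengths must come from the global nonbipartite structure. I would first use the structure theory of dense triangle-free graphs: triangle-free graphs with $\delta>n/3$ are homomorphic to Andrásfai graphs, and in particular are $3$-colourable. Then $G$ is a subgraph of a blow-up of an Andrásfai graph with large parts, and in such a blow-up one can check explicitly that every length from $4$ up to the circumference is realised. The even lengths come from dense bipartite pieces. The odd lengths, from the shortest odd cycle upward, come by extending a shortest odd cycle through large parts two vertices at a time.

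The case $g=3$ I would handle by combining the chord argument with Lemma~\ref{BondyLemma} applied to a dense subgraph spanned by a longest cycle. I expect the triangle-free reduction to be the hardest step.
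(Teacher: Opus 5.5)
\paragraph{Verdict: the girth part is correct; the weak-pancyclicity part has genuine gaps.}
The paper does not prove this lemma; it quotes it from Brandt--Faudree--Goddard, so citing it is all that is required. Your girth argument is correct and self-contained: the Moore-type count $n\ge\delta^2+1$ is incompatible with $\delta\ge(n+2)/3$. The sketch of weak pancyclicity, however, is not a proof, and its triangle-free step rests on a false claim. Triangle-free graphs with $\delta>n/3$ need \emph{not} be $3$-colourable or homomorphic to an Andr\'asfai graph, even under $\delta\ge(n+2)/3$. Take the Gr\"otzsch graph (the Mycielskian of $C_5$) and replace each vertex by an independent set: $6$ vertices for each vertex of the $5$-cycle, $4$ for each of the five vertices adjacent to the hub, and $8$ for the hub. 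This is H\"aggkvist's example. It is triangle-free, $4$-chromatic and $20$-regular on $n=58$ vertices, so it satisfies the hypotheses with $\delta=(n+2)/3$. Three-colourability is guaranteed only for $\delta>10n/29$ (Jin), and this example shows that threshold is sharp. For $\delta>n/3$, the Brandt--Thomass\'e theorem also allows Vega graphs as homomorphic images. More seriously, even with the correct structure theorem you only learn that $G$ is a \emph{subgraph} of a blow-up. Cycles of the blow-up need not lie in $G$, so checking which lengths occur in the blow-up tells you nothing about the cycle lengths of $G$, or about $c(G)$.

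Your general shortening mechanism does not close the argument either. The statement ``every cycle of length $k>g(G)$ yields one of length $k-1$'' is equivalent to the lemma, so all the content sits in your counting step, which is only asserted. Of your local moves, only the chord $c_ic_{i+2}$ shortens $C$ by one. An outside vertex adjacent to $c_i,c_{i+2}$ keeps the length, and one adjacent to $c_i,c_{i+1}$ lengthens it; the shortening configuration would be an outside vertex adjacent to $c_i$ and $c_{i+3}$. In your rigid case, a bipartite structure near $C$ contradicts nothing unless you also show that the odd cycles of $G$ must pass through it. That is exactly where nonbipartiteness has to do real work: $K_{a,b}$ with $3\le a\le b\le 2a-2$ meets the degree condition and is not weakly pancyclic.

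For $g=3$, Bondy's condition is not inherited by the subgraph spanned by a longest cycle. Take $K_{10,18}$ and add a perfect matching inside the part of size $18$. This graph has $n=28$ and $\delta=11\ge(n+2)/3$, contains triangles, and is Hamiltonian. So for a longest cycle $C$ you get $G[V(C)]=G$ with $\delta=11<14=|C|/2$, and Lemma~\ref{BondyLemma} does not apply. The missing shortening argument is the real substance of the Brandt--Faudree--Goddard paper. In this context you should simply cite it, as the paper does.
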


\begin{lemma}[Brandt, Faudree, and Goddard~\cite{Brandt1998}]\label{Brandt2lemma}
  Let $G$ be a 2-connected nonbipartite graph of order $n$ with minimum degree $\delta(G) \ge n/4+250$. Then $G$ is weakly pancyclic unless $G$ has odd girth $7$, in which case it has cycles of every length from $4$ up to its circumference except a $5$-cycle.
\end{lemma}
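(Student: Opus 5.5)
The statement above is quoted from Brandt, Faudree, and Goddard~\cite{Brandt1998}, and the paper uses it without proof. If I had to prove it, I would follow the usual strategy for weak pancyclicity under a linear minimum degree condition: first control the short cycles through the odd girth, then build the long cycles by a downward "cycle shortening" argument from a longest cycle. Throughout, let $\delta=\delta(G)\ge n/4+250$ and $c=c(G)$.

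\emph{Step 1: odd girth.} Let $C$ be a shortest odd cycle, of length $2k+1$. By minimality, a vertex outside $C$ has at most two neighbours on $C$ when $k\ge 2$. Vertices of $C$ have only two neighbours on $C$. Double counting the edges between $C$ and $V(G)\setminus V(C)$ gives roughly $(2k+1)(\delta-2)\le 2n$. With $\delta>n/4+250$ this forces $2k+1\le 7$, so the odd girth is $3$, $5$ or $7$.

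\emph{Step 2: short cycles.} I would find $C_4$ and $C_6$ from degree counting. Two vertices at distance two with many common neighbours give a $C_4$; $\delta>n/4$ forces some pair of vertices with $\ge 2$ common neighbours, since $\sum_v\binom{d(v)}{2}>\binom n2$. From the shortest odd cycle I would then produce the odd lengths just above the odd girth, e.g.\ $C_7,C_9$ from a $C_5$ or $C_7$ by rerouting through common neighbourhoods. When the odd girth is $7$, a $C_5$ is by definition absent, which explains the exception in the statement.

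\emph{Step 3: long cycles, downward from a longest cycle.} Take a longest cycle $C$ of length $c$. Using $2$-connectivity and $\delta$, every vertex off $C$ has many neighbours on $C$. Also, no two of these neighbours are consecutive on $C$, and the usual Bondy/Pósa exchange restrictions on successors hold. From these constraints and the large number of chords of $C$ (each vertex of $C$ sends $\ge \delta-2$ edges), I would find, for each $\ell$ with $\ell\le c$, a chord or a short bypass yielding a cycle of length $\ell-1$ or $\ell-2$. Iterating and handling parity with one odd "switch" gives all lengths from a small constant up to $c$. The intended role of the constant $250$ is to absorb the additive losses in each counting inequality, and the bound $n/4$ is what makes two disjoint arcs of $C$ receive overlapping neighbourhoods.

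\emph{Main obstacle.} The hardest part will be the middle range, connecting the short cycles of Step~2 to the long cycles of Step~3, while controlling parity when $G$ is "almost bipartite". In that situation odd cycles are scarce and each odd length must be obtained by threading a single short odd cycle into a long even cycle. I would first try a stability argument: either $G$ is close to bipartite, in which case a long even cycle plus the odd cycle of Step~1 is spliced by path-length adjustments, or $G$ has many odd cycles, in which case the chord-counting argument of Step~3 already yields both parities.
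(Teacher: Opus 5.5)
The paper does not prove this lemma. It is quoted from Brandt, Faudree, and Goddard and used as a black box, so your outline has to stand on its own, and it does not. Step~1 is sound: a shortest odd cycle of length $2k+1\ge 5$ is induced, and every outside vertex has at most two neighbours on it, so $(2k+1)\delta\le 2n$ and the odd girth is at most $7$. The $C_4$ count is also sound.

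The one concrete structural claim in Step~3, however, is false. You assert that by $2$-connectivity and the degree condition every vertex off a longest cycle $C$ has many neighbours on $C$. Let $G=K_2+3K_t$, i.e.\ two adjacent vertices $a,b$ joined to everything in three disjoint copies of $K_t$. Then:
\begin{itemize}
\item $n=3t+2$ and $\delta=t+1\ge n/4+250$ for $t\ge 998$;
\item $G$ is $2$-connected and nonbipartite;
\item $G-\{a,b\}$ is three disjoint cliques, so a longest cycle has length $2t+2$ and visits only two of them;
\item every vertex of the third clique has exactly two neighbours, $a$ and $b$, on $C$.
\end{itemize}
This $G$ is weakly pancyclic, so it refutes your intermediate claim, not the lemma. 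Your parenthetical ``each vertex of $C$ sends at least $\delta-2$ edges'' also bounds edges rather than chords, since those edges may leave $C$, as the edges from $a,b$ to the third clique do.

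Consequently, the assertion that for every $\ell\le c$ one finds a chord or bypass giving a cycle of length $\ell-1$ or $\ell-2$ has no support. It is not a step but essentially the theorem itself. It would need a quantitative lemma, of Bondy type, limiting how many edges a cycle $C_\ell$ can span or receive when $C_{\ell-1}$ is absent, and you never formulate one.

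The second gap is parity, which is where $2$-connectivity and the odd-girth-$7$ exception really enter. Whenever only the $\ell-2$ option is available, parity is preserved, so everything rests on your ``one odd switch.'' You list this as the main obstacle but do not resolve it. That it needs a genuine argument is shown by gluing $K_s$ to $K_{s,s}$ at one vertex. For $s\ge 1003$ this graph is nonbipartite with $\delta=s-1\ge n/4+250$ and circumference $2s$, yet it has no odd cycle longer than $s$.

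So you must show that in a $2$-connected graph a possibly short, possibly distant odd cycle can be combined with long cycles to produce \emph{every} odd length up to $c$. Menger's theorem supplies two disjoint linking paths, but it gives no control over their lengths or over where they meet the long cycle. Your stability dichotomy is only named, not carried out. Likewise, the odd lengths just above the odd girth in Step~2 (a $C_5$ when the odd girth is $3$, a $C_7$ when it is $5$, and so on) are asserted, not derived.

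As it stands, the proposal is a plan rather than a proof. Within this paper, citing the result, as the authors do, is the appropriate treatment.
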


\begin{lemma}[Dirac~\cite{Dirac1952}]\label{Diraclemma}
  If $G$ is $2$-connected graph, then $c(G) \ge \min\{2\delta(G), |G|\}$.
\end{lemma}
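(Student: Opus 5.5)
We prove the lemma with a longest-path argument that uses 2-connectivity, in the style of the classical Erdős--Gallai proof. Since the lemma is a classical theorem cited from Dirac, a short self-contained sketch is enough. Let $\delta=\delta(G)$ and $N=|G|$. Choose a longest path $P=v_0v_1\cdots v_k$ in $G$. By maximality, every neighbor of $v_0$ and every neighbor of $v_k$ lies on $P$. Let $S=\{i: v_0v_i\in E(G)\}$ and $T=\{i: v_kv_i\in E(G)\}$. Then $|S|,|T|\ge\delta$, and in particular $k\ge\delta$.

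\textbf{Step 1 (crossing case).} Suppose there is an index $i$ with $i+1\in S$ and $i\in T$. Then $v_0v_1\cdots v_iv_kv_{k-1}\cdots v_{i+1}v_0$ is a cycle $C$ on all $k+1$ vertices of $P$. If $k+1<N$, connectivity gives a vertex $w\notin V(C)$ adjacent to some vertex of $C$. Starting at $w$ and walking around $C$ then gives a path on $k+2$ vertices, contradicting the choice of $P$. Hence $C$ is Hamiltonian and $c(G)=N$. The case $v_0v_k\in E(G)$ is handled the same way.

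\textbf{Step 2 (non-crossing case).} Now assume there is no such crossing pair. We aim for a cycle of length at least $|S|+|T|\ge 2\delta$. Let $a=\max S$ and $b=\min T$.

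\emph{Subcase $b<a$.} Take consecutive neighbors of $v_0$ and $v_k$ that bracket one another. Then the cycle built from $v_0\cdots v_s$, the chord $v_sv_k$, the reversed segment back to $v_t$, and the chord $v_tv_0$ is a cycle of length at least $|S|+|T|$. This is the standard count: between any two consecutive elements of $S\cup T$ the path contributes distinct vertices, and non-crossing forces the two neighbor sets to occupy disjoint positions.

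\emph{Subcase $a\le b$.} Here the vertex $v_a$ (or the segment between $v_a$ and $v_b$) would separate $\{v_0,\dots,v_{a-1}\}$ from $\{v_{b+1},\dots,v_k\}$ along $P$. Since $G$ is 2-connected, $v_a$ is not a cut vertex. So there is a path $Q$, internally disjoint from $P$, joining some $v_s$ with $s<a$ to some $v_t$ with $t>a$. Among all such choices we take $Q$ with $s$ minimal and $t$ maximal.

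We then splice $Q$ with the chord from $v_0$ to its neighbor just beyond $v_s$ and the chord from $v_k$ to its neighbor just before $v_t$. This yields a cycle containing $v_0,\dots,v_a$ up to one skipped segment, and likewise $v_b,\dots,v_k$. The count gives at least $(|S|+1)+(|T|+1)-2\ge 2\delta$ vertices. If the chosen $Q$ lands inside the range where the chords fail, we replace $P$ by the longer path it would create, which contradicts maximality.

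\textbf{Main obstacle.} The main difficulty is the bookkeeping in the subcase $a\le b$. One has to choose the bridging path $Q$ (via the extremal choice of $s$ and $t$) so that the rerouting around $v_0$'s and $v_k$'s neighborhoods loses no more vertices than the count allows. I would first try the extremal choice described above. If it does not give the count directly, I would fall back on the Erdős--Gallai lemma: in a 2-connected graph, a longest path with ends $x,y$ yields a cycle of length at least $d(x)+d(y)$. Combining Steps 1 and 2 gives $c(G)\ge\min\{2\delta,N\}$.
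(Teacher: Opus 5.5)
The paper does not prove this lemma; it only cites Dirac, so your argument has to stand on its own. Step~1 and the subcase $b<a$ are fine. Choose $t\in T$, $s\in S$ with $t<s$ and no element of $S\cup T$ strictly between them; then disjointness of $\{i-1: i\in S\}$ and $T$ gives a cycle on at least $|S|+|T|$ vertices.

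\textbf{The gap is the subcase $a\le b$}, which is exactly where 2-connectivity has to do the work. Removing $v_a$ only yields a $P$-path $Q$ from some $v_s$ with $s<a$ to some $v_t$ with $t>a$. Nothing forces $t>b$. When $a<t\le b$, the vertex $v_k$ has no neighbor before $v_t$, so your splice does not exist. Your claim that $P$ can then be lengthened is false, as the following graph shows.
\begin{itemize}
\item Take three disjoint cliques $K^1,K^2,K^3$ of order $\delta+1\ge 4$. Add independent edges $x_1y_1,x_2y_2$ between $K^1$ and $K^2$, and $z_1w_1,z_2w_2$ between $K^2$ and $K^3$.
\item This graph is 2-connected with minimum degree $\delta$.
\item It has a Hamiltonian, hence longest, path $P$: it runs through $K^1$ ending at $x_1$, through $K^2$ from $y_1$ to $z_1$, and through $K^3$ from $w_1$.
\item Then $v_a=x_1$ and $v_b=w_1$. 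The only $P$-path leaving $\{v_0,\dots,v_{a-1}\}$ is $x_2y_2$, which lands strictly between $v_a$ and $v_b$, and no longer path exists.
\item So your construction produces no cycle, although $G$ is Hamiltonian (via both $x_2y_2$ and $z_2w_2$).
\end{itemize}
Two smaller problems: ``$s$ minimal and $t$ maximal'' cannot in general be achieved by a single path, and the count $(|S|+1)+(|T|+1)-2$ is never justified.

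\textbf{The missing idea is to iterate 2-connectivity along the stretch from $v_a$ to $v_b$.}
\begin{itemize}
\item Put $t_0=a$. While $t_{j-1}\le b$, use that $v_{t_{j-1}}$ is not a cut vertex to choose a $P$-path $Q_j$ from some $v_{s_j}$ with $s_j<t_{j-1}$ to $v_{t_j}$, with $t_j$ as large as possible. Stop at the first $r$ with $t_r>b$.
\item Then $t_{j-1}\le s_{j+1}<t_j$, where the first inequality comes from maximality of $t_j$. Maximality also forces the $Q_j$ to be pairwise internally disjoint, since otherwise a shortcut would contradict it.
\item Let $s'$ be the first neighbor of $v_0$ after position $s_1$. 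Close a cycle from four pieces:
  \begin{itemize}
  \item the path $v_{s_1}v_{s_1-1}\cdots v_0v_{s'}v_{s'+1}\cdots v_a$ (just $v_0v_1\cdots v_a$ if $s_1=0$), which contains $v_0$ and all of $N(v_0)$;
  \item the symmetric path from $v_b$ to $v_{t_r}$ through $v_k$ and all of $N(v_k)$;
  \item the paths $Q_1,\dots,Q_r$;
  \item the segments $v_{t_{j-1}}\cdots v_{s_{j+1}}$ ($1\le j\le r-1$) and $v_{t_{r-1}}\cdots v_b$ of $P$, traversed alternately forward and backward.
  \end{itemize}
\item This cycle has at least $|S|+|T|+1\ge 2\delta+1$ vertices.
\end{itemize}

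Your fallback does not close the gap. The Erd\H{o}s--Gallai/Bondy lemma is at least as strong as the statement being proved. As you state it, it is also false without taking the minimum with $|V(P)|$: in $K_n$ it would give a cycle of length $2n-2$.
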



\begin{lemma}[Rosta~\cite{Rosta1973}, Faudree and Schelp~\cite{Faudree1974}]\label{cycle}
\[R(C_m,C_n)=
\begin{cases}
2n-1 & \text{for } 3\le m\le n,\,m\text{ odd},\,(m,n)\neq(3,3),\\
n-1+m/2 & \text{for } 4\le m\le n,\,m,n\text{ even},\,(m,n)\neq(4,4),\\
\max\{n-1+m/2,2m-1\} & \text{for } 4\le m<n,\,m\text{ even and }n\text{ odd}.
\end{cases}\]
\end{lemma}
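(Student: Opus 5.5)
Since this lemma is a classical result, the natural approach follows the standard two-part scheme: explicit colorings for the lower bounds, and induction on $n$ with a cycle-extension argument for the upper bounds.

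\emph{Lower bounds.} Each case comes from a coloring on one fewer vertex than the claimed value.
\begin{enumerate}[(i)]
\item For $m$ odd, take two disjoint red copies of $K_{n-1}$ with all edges between them blue. The blue graph is bipartite, so there is no blue odd $C_m$. Every red component has fewer than $n$ vertices, so there is no red $C_n$. This gives $R(C_m,C_n)\ge 2n-1$.
\item For $m,n$ even, take a red $K_{n-1}$ and a red $K_{m/2-1}$ with all edges between them blue. A blue cycle alternates between the two parts, so it has length at most $2(m/2-1)=m-2<m$. The red components have orders $n-1$ and $m/2-1$, so there is no red $C_n$. This gives the bound $n-1+m/2$.
\item For $m$ even and $n$ odd, use (ii) together with two red copies of $K_{m-1}$ joined by blue edges. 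The blue graph is bipartite, so there is no blue odd $C_n$, and the red components have order $m-1<m$, so there is no red $C_m$. This gives the value $\max\{n-1+m/2,\,2m-1\}$.
\end{enumerate}

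\emph{Upper bounds.} Fix $m$ and induct on $n$, starting from small base cases that are checked directly. Let $N$ be the claimed value, take a $2$-coloring of $K_N$, and suppose there is no red $C_n$ (the long cycle) and no blue $C_m$. Since $R(C_m,C_{n-1})\le N-1$ or $N-2$, the induction hypothesis gives a red $C_{n-1}$, say $x_1x_2\cdots x_{n-1}$.

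Let $Y$ be the set of outside vertices. Any $y\in Y$ cannot be red-adjacent to two consecutive vertices $x_i,x_{i+1}$, since that would give a red $C_n$. More generally, if $y,y'\in Y$ with $yx_i$ and $y'x_{i+1}$ red, we need to rule out rerouting, which would require $yy'$ red together with a further condition. So each outside vertex sends many blue edges into the cycle, and these blue neighborhoods follow a rigid alternating pattern.

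From these blue edges we then build a blue $C_m$.
\begin{itemize}
\item For $m$ odd, the blue graph between $Y$ and the cycle is nearly complete bipartite with an ``every other vertex'' structure. We close an odd blue cycle using one blue edge inside $Y$ or inside the cycle. Such an edge must exist, since otherwise $Y$ would span a red clique of order at least $n$.
\item For $m$ even, the blue graph contains a large dense bipartite subgraph. We find a blue $C_m$ by alternating between $Y$ and a set of about $(n-1)/2$ cycle vertices; this uses $|Y|\ge m/2$, which is exactly where $n-1+m/2$ comes from.
\end{itemize}
Degree conditions such as Lemma~\ref{BondyLemma} and Lemma~\ref{Diraclemma} help guarantee cycles of the exact length needed in the dense blue part.

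\emph{Main obstacle.} The hardest step is the extension analysis. We need to show that failure to extend the red $C_{n-1}$ forces enough blue structure to contain a cycle of length \emph{exactly} $m$, not merely a long blue cycle. This is delicate when $m$ is close to $n$, where $|Y|$ is small, and in the boundary cases excluded in the statement, such as $(4,4)$ and $(3,3)$. I would first attempt the case $m=n$ and $m$ odd, using rotation–extension on the red cycle. If that becomes unwieldy, I would fall back on the cited original proofs of Rosta and of Faudree and Schelp for the remaining parity cases.
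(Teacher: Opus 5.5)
\textbf{Context.} The paper gives no proof of this lemma. It quotes the result from Rosta and from Faudree--Schelp and uses it as a black box: the even--even case in Lemma~\ref{CycleWheelLemma}, and the odd diagonal case $R(C_{2m+1},C_{2m+1})=4m+1$ in Proposition~\ref{threecliques}.

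\textbf{Lower bounds.} Your three colourings are correct and are the standard ones.

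\textbf{Upper bounds: the inductive step fails in the even--even case.} The upper-bound half is not a proof, and one concrete step is false. You assert $R(C_m,C_{n-1})\le N-1$ (or $N-2$), so that a colouring of $K_N$ with no blue $C_m$ must contain a red $C_{n-1}$. When $m,n$ are both even, $n-1$ is odd, so the lemma itself gives $R(C_m,C_{n-1})=\max\{n-2+m/2,\,2m-1\}$ for $m<n$, and $R(C_{n-1},C_n)=2n-1$ for $m=n$. Both exceed $N=n-1+m/2$ whenever $n<3m/2$.

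Here is an explicit colouring. For even $m\le n<3m/2$, colour $K_N$ so that blue is $K_{m-1}\cup K_{n-m/2}$ and red is the complete bipartite graph between these two cliques. There is no blue $C_m$, and red has no odd cycle at all, so there is no red $C_{n-1}$ to extend. Hence your induction does not reach the second line of the lemma for $n<3m/2$. This includes the diagonal $R(C_{2k},C_{2k})=3k-1$, which is what Lemma~\ref{CycleWheelLemma} needs when $n=m$. Relatedly, with $m$ fixed, the ``small base case'' of your induction is $n=m$. That is the whole diagonal family, which is the heart of the theorem, not something that can be checked directly.

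\textbf{Upper bounds: the extension step is only asserted.} This applies even where a red $C_{n-1}$ does exist ($m$ odd, or $n$ odd).
\begin{itemize}
\item Non-insertability of $y$ only says that the red neighbours of $y$ on the cycle form an independent set of $C_{n-1}$. So $y$ has at least $\lceil (n-1)/2\rceil$ blue neighbours there. That is about half the cycle, not a ``nearly complete'' bipartite graph.
\item A priori, two outside vertices can have nearly disjoint blue neighbourhoods: one red to roughly the odd positions, the other red to the even ones.
\item Consequently, neither your closing argument for odd $m$ nor the alternating blue $C_m$ through exactly $|Y|=m/2$ outside vertices follows. Producing a blue cycle of length exactly $m$ from this weak structure is the substance of the original papers.
\item Your rerouting configuration is also off. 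If $yx_i$, $y'x_{i+1}$ and $yy'$ are red, you get a red $C_{n+1}$, not a $C_n$. The relevant configuration is $yx_i$, $y'x_{i+2}$, $yy'$ red.
\end{itemize}

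\textbf{Conclusion.} The only complete route in your proposal is the final fallback of citing Rosta and Faudree--Schelp. That is exactly, and appropriately, what the paper does.
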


We present an upper bound for the Ramsey number of an even cycle versus a wheel of odd order, which is of independent interest.

\begin{lemma}\label{CycleWheelLemma}
  $R(C_{2n},W_{2m+1})\le \min\{\left\lfloor 9n/2 \right\rfloor, 4n+332\}$ for $n\ge m\ge 2$.
\end{lemma}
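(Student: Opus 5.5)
Set $N=\min\{\lfloor 9n/2\rfloor, 4n+332\}$ and fix an arbitrary red-blue coloring of $K_N$. Let $R$ be the red graph and $B$ the blue graph. Assume there is no red $C_{2n}$ and no blue $W_{2m+1}=K_1+C_{2m}$; we derive a contradiction.

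A blue $W_{2m+1}$ is a vertex $v$ together with a blue $C_{2m}$ inside its blue neighbourhood $N_B(v)$. So the proof reduces to two things: showing some vertex has a large blue neighbourhood, and then finding a blue even cycle of length $2m$ inside it. The tools are Lemma~\ref{cycle} (cycle--cycle Ramsey numbers), Lemma~\ref{Diraclemma} (Dirac's circumference bound), and the weak pancyclicity results, Lemmas~\ref{Brandt1lemma} and~\ref{Brandt2lemma}. Lemma~\ref{Brandt2lemma} is presumably where the $4n+332$ term comes from, since $|N_B(v)|$ is then large enough that $n/4+250$ is dominated by the minimum degree.

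\textbf{Step 1 (degree bounds).} Take any vertex $v$.
\begin{itemize}
\item If $|N_B(v)|\ge R(C_{2m},C_{2n})$, then $N_B(v)$ contains a red $C_{2n}$ or a blue $C_{2m}$. The second option, together with $v$, gives a blue $W_{2m+1}$. By Lemma~\ref{cycle}, $R(C_{2m},C_{2n})=2n+m-1\le 3n-1$.
\item So every vertex has blue degree at most $3n-2$, and hence red degree at least $N-3n+1\ge n+1$.
\end{itemize}
A red-degree lower bound alone does not force a red $C_{2n}$, so a structural argument is needed. The plan is:
\begin{itemize}
\item Take a longest red cycle $C$, of length $\ell<2n$.
\item Use Lemma~\ref{Diraclemma} on a red 2-connected block with minimum degree at least $n$. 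Any such block of order at least $2n$ has $c\ge 2n$.
\item Then use Lemma~\ref{Brandt1lemma} or Lemma~\ref{Brandt2lemma} to pass from the circumference to a cycle of length exactly $2n$. This needs the block to be nonbipartite. If it is bipartite, it has a long even cycle anyway, so it contains a $C_{2n}$ directly.
\end{itemize}
This forces every red 2-connected piece either to be small (fewer than $2n$ vertices) or to avoid large minimum degree.

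\textbf{Step 2 (structure).} With the red graph having no $C_{2n}$ and minimum red degree at least about $N-3n$, I expect the red graph to split into a few dense pieces: red cliques or near-cliques on fewer than $2n$ vertices, joined by cut vertices or small cuts. Between two such pieces almost all edges are blue. The blue graph then contains a nearly complete bipartite or multipartite structure on about $N\ge 4n$ vertices.

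\textbf{Step 3 (finding the blue wheel).} Pick a vertex $v$ in one piece. Its blue neighbourhood contains most of the other pieces, which carry a dense blue bipartite graph between them. In that neighbourhood:
\begin{itemize}
\item If the blue minimum degree is at least $|N_B(v)|/2$, Lemma~\ref{BondyLemma} gives pancyclicity. The exceptional case $K_{t,t}$ also contains $C_{2m}$.
\item Otherwise, Lemma~\ref{Brandt2lemma} applies when $|N_B(v)|$ is large relative to $n$, giving weak pancyclicity. Combined with Lemma~\ref{Diraclemma}, this yields circumference at least $2m$, and the even length $2m$ avoids the missing $5$-cycle exception.
\end{itemize}
This produces a blue $C_{2m}$ in $N_B(v)$, contradicting our assumption.

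The main obstacle is Step~2. One must rule out intermediate configurations, for example three or four red pieces of size close to $n$ to $2n$, precisely at the threshold $\lfloor 9n/2\rfloor$. For that case I would first count: with at most three red pieces each of size below $2n$, the total order already forces a piece of size at least $3n/2$. Inside that piece I would try to build the red $C_{2n}$ through a cut vertex. That is the step I expect to require careful casework.
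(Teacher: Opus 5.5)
Your Step 1 matches the paper's first step: every vertex has blue degree at most $3n-2$, so the red graph $R$ has $\delta(R)\ge N-3n+1\ge\min\{\lfloor 3n/2\rfloor+1,\,n+333\}$. (A caveat you share with the paper: Lemma~\ref{cycle} excludes $(4,4)$, and $R(C_4,C_4)=6>3n-1$ when $n=m=2$. That case needs a separate check. For instance, blue degree at most $5$ forces at least $14$ red edges on $9$ vertices, more than $\mathrm{ex}(9,C_4)=13$. The case is irrelevant for the later application with $m\ge 4$.)

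After Step 1, however, there is a genuine gap. Steps 2 and 3 are expectations rather than arguments, and the route they sketch does not hold up. Step 3 applies Lemma~\ref{BondyLemma} or Lemma~\ref{Brandt2lemma} to the blue graph inside $N_B(v)$, but nothing you have proved gives a lower bound on blue degrees there; you only know $|N_B(v)|\le 3n-2$. So Step 3 rests entirely on the unproved decomposition of Step 2. Your block-based plan has a similar problem: the minimum degree of a red block is not controlled by $\delta(R)$, since a vertex's red neighbours may be spread over several blocks. Finally, the remark that a bipartite red block ``has a long even cycle anyway, so it contains a $C_{2n}$ directly'' is false. Large circumference in a bipartite graph does not give a cycle of length exactly $2n$.

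The missing idea is that the Step 1 bound is already strong enough to apply Lemmas~\ref{Brandt1lemma}, \ref{Brandt2lemma} and~\ref{Diraclemma} to the whole red graph. Indeed, $\delta(R)\ge\lfloor 3n/2\rfloor+1\ge (N+2)/3$ when $N=\lfloor 9n/2\rfloor$, and $\delta(R)\ge n+333=N/4+250$ when $N=4n+332$. So if $R$ is 2-connected and nonbipartite, it has cycles of every length from $4$ up to its circumference, except possibly $5$; it contains a $C_4$ by density. Its circumference is at least $2\delta(R)\ge 2n$, so it has a red $C_{2n}$. No decomposition into pieces is needed, because each failure of these hypotheses is easy to exploit:
\begin{itemize}
\item If $R$ is bipartite, one side has at least $\lceil N/2\rceil\ge 2m+1$ vertices (as $N\ge 4n+1$), and it is a blue clique.
\item If $R$ has a cut vertex $u$ (or is disconnected), each component of $R-u$ has more than $n\ge m$ vertices. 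With at least three components, take one vertex from one component and $m$ vertices from each of two others. These span a blue $K_{1,m,m}\supseteq W_{2m+1}$. This disposes of the ``three or four pieces'' configuration you were worried about.
\item With exactly two components $H_1,H_2$, suppose some $w\in H_1$ has at least $n$ red non-neighbours in $H_1$. Then $w$, $m$ of these non-neighbours and $m$ vertices of $H_2$ again span a blue $K_{1,m,m}$. Otherwise $\delta(H_i)\ge|H_i|-n$ for both $i$. Since $|H_1|+|H_2|=N-1\ge 4n$, one component has at least $2n$ vertices, hence minimum degree at least half its order. Lemma~\ref{BondyLemma} then yields a red $C_{2n}$.
\end{itemize}
Note that the blue wheel only ever comes from these complete multipartite blue configurations, never from a pancyclicity argument in the blue graph.
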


\begin{proof}
Consider a graph $G$ with $\min\{\left\lfloor 9n/2 \right\rfloor, 4n+332\}$ vertices. Let $v$ be a vertex of minimum degree in $G$.

First, assume that $d(v)\ge 3n-1$ in $\overline{G}$. Let $H$ be the subgraph induced by the non-neighbors of $v$. Hence, $|H|\ge 3n-1$. According to Lemma~\ref{cycle}, we have $R(C_{2n},C_{2m})=2n+m-1\le 3n-1$. Either $H$ contains $C_{2n}$ as a subgraph, or $\overline{H}$ contains $C_{2m}$ as a subgraph, which, together with vertex $v$, forms a $W_{2m+1}$ in $\overline{G}$. In either case, we obtain the desired graph. Thus, in the following, we only need to consider the case where $d(v)\le 3n-2$ in $\overline{G}$, that is, 
\[
\delta(G)\ge \min\{\left\lfloor 3n/2 \right\rfloor+1, n+333\}.
\]

If $\kappa(G)\le 1$, then there exists a vertex $u$ in the graph $G$ such that removing this vertex disconnects $G$. Note that if the graph $G$ is already disconnected, any vertex can be chosen as the vertex $u$. Since $\delta(G-u)\ge \min\{\left\lfloor 3n/2 \right\rfloor, n+332\}$, each connected component must have order at least \[\min\{\left\lfloor 3n/2 \right\rfloor+1, n+333\}.\]
  
If the graph $G-u$ has exactly two connected components, we denote the induced subgraphs of these components as $H_1$ and $H_2$. If $\delta(H_1)\le |H_1|-n-1$, then we select a vertex of minimum degree in $H_1$, denoted as $w$. Consider $m$ non-neighbors of $w$ in $H_1$ and $m$ non-neighbors of $w$ in $H_2$. These vertices must form a cycle $C_{2m}$ in $\overline{G}$, and when combined with vertex $w$, they create a $W_{2m+1}$ in the complement graph. Therefore, we may assume that $\delta(H_1)\ge |H_1|-n$. For the same reason, we can also assume that $\delta(H_2)\ge |H_2|-n$. By the pigeonhole principle, either $|H_1|\ge 2n$ or $|H_2|\ge 2n$. Without loss of generality, we assume the former holds. Thus, we have $\delta(H_1)\ge |H_1|/2$. According to Lemma~\ref{BondyLemma}, the graph $H_1$ contains $C_{2n}$ as a subgraph.
  
If the graph $G-u$ has at least three connected components, since each component contains at least $m$ vertices, we can select one vertex from one component and $m$ vertices from each of the other two components. The subgraph induced by these $2m+1$ vertices in $\overline{G}$ contains $W_{2m+1}$ as a subgraph.

If the graph $G$ is bipartite, then the larger part contains at least $2m+1$ vertices. Thus, $\overline{G}$ contains $W_{2m+1}$ as a subgraph.

Now we only need to consider the case where $\kappa(G) \ge 2$ and $G$ is not bipartite. If $n+333 \le \left\lfloor 3n/2 \right\rfloor +1$, it is easy to verify that
\[
\delta(G) \ge n+333 \ge (4n+332)/4+250 \ge |G|/4+250.
\]
According to Lemma~\ref{Brandt2lemma}, $G$ is weakly pancyclic (though it may not contain a 5-cycle). If $n+333 \ge \left\lfloor 3n/2 \right\rfloor +1$, it is easy to verify that
\[
\delta(G) \ge \left\lfloor 3n/2 \right\rfloor +1\ge (\left\lfloor 9n/2 \right\rfloor +2)/3 \ge (|G|+2)/3.
\]
By Lemma~\ref{Brandt1lemma}, $G$ is weakly pancyclic. The graph $G$ obviously contains $C_4$ as a subgraph. From Lemma~\ref{Diraclemma}, we can conclude that the length of the longest cycle in graph $G$ is at least $2\delta(G) \ge 2n$. Therefore, $G$ contains $C_{2n}$ as a subgraph. This completes the proof of the lemma.
\end{proof}

We now begin to prove Theorem \ref{thmwheel}. 
\vskip 2mm
First consider  the lower bound for $R(W_n)$. 

When $n$ is even, it is easy to verify that both the graph $3K_{n-1}$ and its complement do not contain $W_n$ as a subgraph. Therefore, we have $R(W_n) \ge 3n-2$.

When $n$ is odd, since $R(W_n) \ge R(\widehat{K}_n)$, the lower bound of $R(\widehat{K}_n)$ in Theorem~\ref{thmkipas} serves as the lower bound for $R(W_n)$, which will be proved in Section~\ref{Seckipas}.
\vskip 2mm
So we are left to show the upper bound for $R(W_n)$.
We divide the proof of the upper bound into three theorems.
\begin{theorem}\label{thmwheel-1}
  For even $n\ge 8$, we have $R(W_n) \le 6n-10$.
\end{theorem}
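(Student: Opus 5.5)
Let $N=6n-10$ and fix a red/blue colouring of $K_N$, writing $G$ for the red graph and $\overline{G}$ for the blue graph. Since $W_n=K_1+C_{n-1}$ and $n$ is even, the rim $C_{n-1}$ is an odd cycle of length $n-1\ge 7$. The plan is a degree-counting argument at a single vertex, combined with the cycle--wheel Ramsey number in Lemma~\ref{Chenlemma}.

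\textbf{Step 1: pick a heavy colour class at a vertex.} Take any vertex $v$. Its $N-1=6n-11$ incident edges split into red and blue neighbourhoods. By pigeonhole, one of them, say the red one $N_R(v)$ by symmetry, has size at least $3n-5$.

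\textbf{Step 2: apply a cycle-versus-wheel bound inside that neighbourhood.} If the red graph on $N_R(v)$ contains a $C_{n-1}$, then together with $v$ it forms a red $W_n$. So we need a red $C_{n-1}$ in $N_R(v)$, or else a blue $W_n$ there. This is exactly a statement of the form $R(C_{n-1},W_n)\le 3n-5$. However, Lemma~\ref{Chenlemma} concerns an even wheel with $n'\ge m-1$. Taking $n'=n-1$ and $m=n$ gives $R(C_{n-1},W_n)=3(n-1)-2=3n-5$, and the hypothesis $n-1\ge n-1\ge 3$ holds, with $(n-1,n)\ne(3,4)$ since $n\ge 8$. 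So $|N_R(v)|\ge 3n-5$ forces either a red $C_{n-1}$, which completes a red $W_n$ with $v$, or a blue $W_n$ inside $N_R(v)$. The blue case is symmetric, with colours swapped.

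\textbf{Step 3: check the count and the boundary.} The numbers must fit exactly: the pigeonhole needs $\lceil (N-1)/2\rceil\ge 3n-5$, i.e.\ $N-1\ge 6n-11$, which holds with equality at $N=6n-10$. So this whole argument is essentially an application of Lemma~\ref{Chenlemma}. I expect the main obstacle to be the boundary case of that lemma. First, I must confirm that its parameter convention is that $W_m$ has order $m$, as the present paper uses. Second, I must confirm that the condition $n\ge m-1$, applied with cycle length $n-1$ and wheel order $n$, is genuinely within its range; it is, since it holds with equality. If instead the cited result used the other wheel convention, I would fall back on a slightly larger neighbourhood, with minimum-degree and pancyclicity arguments (Lemmas~\ref{BondyLemma} and \ref{Brandt1lemma}) to find the odd cycle $C_{n-1}$.
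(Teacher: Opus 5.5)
Your proposal is correct and is exactly the paper's proof: pigeonhole at a single vertex gives a monochromatic neighbourhood of size at least $3n-5$, and Lemma~\ref{Chenlemma} with cycle length $n-1$ and wheel order $n$ gives $R(C_{n-1},W_n)=3n-5$. Either the rim $C_{n-1}$ completes a wheel with $v$, or the other colour already contains $W_n$ inside that neighbourhood. Your worry about the wheel convention is unnecessary: Lemma~\ref{Chenlemma} is stated in the paper's own convention, where $W_m$ has order $m$, so no fallback argument is needed.
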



\begin{theorem}\label{thmwheel-2}
  For even $n\ge 8$, we have $R(W_n) \le (16n+1978)/3$.
\end{theorem}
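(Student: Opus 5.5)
We prove $R(W_n)\le (16n+1978)/3$ for even $n$, following the scheme of Lemma~\ref{CycleWheelLemma} but with the large-$n$ tools (Lemma~\ref{Brandt2lemma}) in place of the $\delta\ge(|G|+2)/3$ tools. Write $n=2k$, so $W_n=K_1+C_{2k-1}$ has an odd rim. Let $N=\lceil(16n+1978)/3\rceil$ and take a red/blue colouring of $K_N$ with red graph $G$. Pick a vertex $v$; one colour class at $v$ has size at least $(N-1)/2$, say blue, and let $H$ be its blue neighbourhood. We want a red $W_n$ in $G$ or a blue $W_n$ in $\overline{G}$.

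\textbf{Reduction to a cycle problem in a neighbourhood.} A blue $C_{n-1}$ inside $H$ gives a blue $W_n$ with centre $v$. So we may assume $\overline{G}[H]$ has no $C_{n-1}$. Since $|H|\ge(N-1)/2\approx 8n/3+330$ exceeds $R(C_{n-1},C_{n-1})=2n-3$ (Lemma~\ref{cycle}, odd case), $G[H]$ contains a red $C_{n-1}$. This alone is not enough, however. The real content is to find a vertex $w$ whose red neighbourhood contains a red $C_{n-1}$, or whose blue neighbourhood contains a blue $C_{n-1}$.

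We do this by degree counting. If every vertex has red degree and blue degree both at least about $8n/3+330$, then apply the argument to a vertex $w$ and its larger neighbourhood, say red, $H_w$. If $G[H_w]$ contains $C_{n-1}$ we are done. Otherwise $\overline{G}[H_w]$ has large minimum degree: every $x\in H_w$ has red degree at most roughly $N-|H_w|$ in its complement, and so its blue degree inside $H_w$ is at least $|H_w|-(N-1-d_{\text{blue}}(x))$. The threshold $16n/3$ is exactly where this gives $\delta(\overline{G}[H_w])\ge |H_w|/4+250$. Indeed, with $|H_w|\ge (N-1)/2$ and all colour degrees at least $(N-1)/2-$something, we get $\delta\ge |H_w|/4+250$ once $N\ge (16n+1978)/3$; the constant $1978=3\cdot 659+1$ absorbs the $250$ and rounding.

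\textbf{Finding the odd cycle via Lemma~\ref{Brandt2lemma}.} In $\overline{G}[H_w]$ we dispose of the degenerate cases as in Lemma~\ref{CycleWheelLemma}:
\begin{itemize}
\item if it is disconnected or has a cut vertex, then its components are large and complementarily red, which yields red cycles of the right length and a red wheel;
\item if it is bipartite, then one side has at least $|H_w|/2\ge n$ vertices and is a red clique containing a $W_n$.
\end{itemize}
Otherwise it is 2-connected and nonbipartite with $\delta\ge|H_w|/4+250$. Lemma~\ref{Brandt2lemma} then gives weak pancyclicity, apart from possibly missing $C_5$, which is irrelevant since $n-1\ge 7$. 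Lemma~\ref{Diraclemma} gives circumference at least $\min\{2\delta,|H_w|\}\ge n-1$. Since the graph is nonbipartite, it has an odd cycle, so the girth is at most $4$ or odd-girth considerations apply. Hence $C_{n-1}$ exists in blue inside the red neighbourhood. This does not directly give a wheel, because the centre $w$ is joined to $H_w$ in red. The repair is to run the argument on a blue neighbourhood, or to use the blue $C_{n-1}$ together with a common blue neighbour. So the counting must be arranged so that we always land in a monochromatic neighbourhood of the same colour as the cycle we obtain.

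\textbf{Handling unbalanced vertices, and the main obstacle.} If some vertex $v$ has small red degree, at most about $8n/3$, then its blue neighbourhood $H$ is huge, about $8n/3+660$. In $H$, either there is a blue $C_{n-1}$ (done) or $G[H]$ has large minimum degree relative to $|H|$. We would then apply Lemma~\ref{Brandt2lemma} in $G[H]$ to get a red $C_{n-1}$. We still need a centre: pick $x\in H$ of maximum red degree inside $H$ and recurse on $N_G(x)\cap H$.

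The main obstacle is this self-consistency. Lemma~\ref{Brandt2lemma} produces a cycle of the colour opposite to the neighbourhood colour, so we need a second layer. The plan is to take a vertex $x$ and its red neighbourhood $R$. Assuming there is no red $C_{n-1}$ in $R$, blue is dense in $R$, so some $y\in R$ has large blue degree within $R$. Then $B=N_{\overline{G}}(y)\cap R$ has size at least $|R|/2$-ish, and we apply the Brandt--Faudree--Goddard lemma to $\overline{G}[B]$. The bound $16n/3$ should come out precisely from requiring $|B|/4+250\le \delta(\overline{G}[B])$ and $2\delta\ge n-1$ along this two-step chain. I expect balancing these inequalities, and the boundary cases where $G[B]$ is nearly bipartite or has a cut vertex, to be the delicate part.
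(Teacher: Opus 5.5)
There is a genuine gap. The proposal never closes, and the obstacle you identify is an artifact of inverting the colours. Your degree count does not give $\delta(\overline{G}[H_w])\ge |H_w|/4+250$: the bound $|H_w|-1-d_{\mathrm{red}}(x)$ is essentially $0$ when all colour degrees are near $N/2$. In any case, a blue $C_{n-1}$ inside a red neighbourhood is useless.

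The missing step is the cycle Ramsey number applied \emph{inside} the neighbourhood. Let $H=G[N_G(v)]$. If some $u\in V(H)$ has at least $R(C_{n-1},C_{n-1})=2n-3$ blue neighbours in $H$, those vertices span either a red $C_{n-1}$ (a red wheel centred at $v$) or a blue $C_{n-1}$ (a blue wheel centred at $u$). Hence $\delta(H)\ge |H|-2n+3$, so the neighbourhood's \emph{own} colour is dense. The inequality $|H|-2n+3\ge |H|/4+250$ holds once $|H|\ge (8n+988)/3$, and this is where $16n/3$ comes from.

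With this, Lemmas~\ref{Brandt2lemma} and~\ref{Diraclemma} give a red $C_{n-1}$ in $N_G(v)$ whenever $H$ is 2-connected and nonbipartite. If $H$ is bipartite, one part is a blue clique on at least $n$ vertices. No second layer is needed. Your second layer is also unjustified: if $y$ has $2n-3$ blue neighbours in $R$ you are already done, and nothing gives $\overline{G}[B]$ the minimum degree that Lemma~\ref{Brandt2lemma} requires.

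The real difficulty is the case you dismiss in one bullet, namely $H$ not 2-connected. Your argument there (large components, cross edges of the other colour, hence cycles of the right colour) does not survive once the colours are set right. For large $n$, $N_G(v)$ can be exactly three disjoint red cliques, each of order at most $n-2$, with all edges between them blue. This is consistent with $\delta(H)\ge |H|-2n+3$, yet $N_G[v]$ contains no monochromatic $W_n$. The red cliques are too small, and the blue graph is complete tripartite while $\chi(W_n)=4$. So no argument confined to one neighbourhood can finish.

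The paper first proves that this three-clique structure is forced, up to deleting two vertices (Proposition~\ref{threecliques}). It then argues globally (Proposition~\ref{secvertex}). A second vertex $v_2$ in the cliques with large red degree has its own three-clique structure. Comparing the two structures yields either a red clique on $n+1$ vertices or a blue $C_{n-1}$ through $V_2\cup V_3\cup V_5\cup V_6$ centred at a vertex of $V_1$. If no such $v_2$ exists, every clique vertex has large blue degree. Then a vertex $w_1$ blue-adjacent to three vertices in the other cliques lets both propositions be rerun in $\overline{G}$. This second-vertex, structure-comparison argument is the idea your plan is missing.
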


\begin{theorem}\label{thmwheel-3}
  For odd $n\ge 9$, we have $R(W_n)\le 4n+\min\{(n-9)/2, 660\}$.
\end{theorem}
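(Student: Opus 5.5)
We prove Theorem~\ref{thmwheel-3} by combining Lemma~\ref{CycleWheelLemma} with a standard minimum-degree argument. Write $n=2k+1$, so $W_n=K_1+C_{2k}$ with $k\ge 4$. Take a red-blue coloring of $K_N$ with
\[
N=4n+\min\{(n-9)/2,660\}=\min\{\lfloor 9(n-1)/2\rfloor+c,\;4(n-1)+332+c'\}
\]
for the appropriate small constants; we will check the arithmetic at the end. Suppose it contains neither a red nor a blue $W_n$. Pick any vertex $v$; by symmetry between colors, $v$ has at least $\lceil (N-1)/2\rceil$ neighbors in one color, say red. Let $H$ be the subgraph induced by the red neighbourhood $N_R(v)$.

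A red $C_{2k}$ inside $H$ would, together with $v$, give a red $W_n$. So $H$ contains no red $C_{2k}$. Since we also want to exclude a blue $W_n$ inside $H$, we apply Lemma~\ref{CycleWheelLemma} with the cycle $C_{2k}$ in red and the wheel $W_{2k+1}$ in blue. This requires
\[
|H|\ge \min\{\lfloor 9k/2\rfloor, 4k+332\}=R\text{-bound}(C_{2k},W_{2k+1}),
\]
with $n$ replaced by $k$ and $m=k$ in the lemma; the hypothesis $n\ge m\ge 2$ there holds. Hence it suffices that $\lceil (N-1)/2\rceil$ reaches this bound. Taking $k=(n-1)/2$ gives $\lfloor 9k/2\rfloor=\lfloor 9(n-1)/4\rfloor$ and $4k+332=2n+330$.

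The naive pigeonhole at $v$ only gives $|H|\ge (N-1)/2$. This forces $N\approx 9(n-1)/2$ or $4n+662$, which already matches $4n+(n-9)/2$ and $4n+660$ up to an additive constant. The main obstacle is therefore the exact constant. We will handle it with a parity/degree refinement: if every vertex had red and blue degree both below the threshold $t$, then $N-1\le 2t-2$. So we choose $N=2t$, where $t=\min\{\lfloor 9k/2\rfloor, 4k+332\}$, and some vertex has monochromatic degree at least $t$.

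For $n\equiv 1\pmod 4$, $k$ is even and $2\lfloor 9k/2\rfloor=9k=9(n-1)/2=4n+(n-9)/2$, giving exactly the first term. Also $2(4k+332)=4n-4+664=4n+660$, giving exactly the second term. For $n\equiv 3\pmod 4$ we get $2\lfloor 9k/2\rfloor=9k-1$, which is even smaller than needed. Since the stated bound $4n+(n-9)/2$ need not be an integer there, we read it as the floor, and the inequality $2t\le N$ holds in all cases. Thus some vertex $v$ has at least $t$ neighbors in one color, and Lemma~\ref{CycleWheelLemma} applied to that neighbourhood produces either a $C_{2k}$ of that color, completing a $W_n$ with $v$, or a $W_{2k+1}=W_n$ in the other color. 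This proves $R(W_n)\le 4n+\min\{(n-9)/2,660\}$.
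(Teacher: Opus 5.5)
Your proof is correct and is essentially the paper's argument. You use pigeonhole at one vertex to get a monochromatic neighbourhood of size at least $t=\min\{\lfloor 9k/2\rfloor,4k+332\}$, then apply Lemma~\ref{CycleWheelLemma} with both parameters equal to $k=(n-1)/2$. One small correction: for odd $n$ the quantity $(n-9)/2$ is always an integer, so no floor is needed when $n\equiv 3\pmod 4$; there $2\lfloor 9k/2\rfloor=9k-1<9k$, so $2t\le N$ holds anyway.
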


\begin{proof}[\textbf{\textit{Proof of Theorem~\ref{thmwheel-1}}}]
  Consider any graph $G$ with $6n-10$ vertices and any vertex $v$ in $G$. By the pigeonhole principle, either $d(v) \ge 3n-5$ in $G$ or $d(v) \ge 3n-5$ in $\overline{G}$. Without loss of generality, we assume the former holds. Let $H$ denote the subgraph induced by the neighborhood $N(v)$. Then, we have $|H| \ge 3n-5$. 

  By Lemma~\ref{Chenlemma}, we have $R(C_{n-1},W_n)=3n-5$ for $n \ge 5$. Therefore, either $H$ contains a subgraph $C_{n-1}$, which combined with vertex $v$ forms a $W_n$; or $\overline{H}$ contains $W_n$ as a subgraph. In either case, we obtain the desired $W_n$, thus completing the proof.
\end{proof}

\begin{proof}[\textbf{\textit{Proof of Theorem~\ref{thmwheel-2}}}]
This result is equivalent to proving $R(W_{2m+2}) \le (32m+2010)/3$ for $m \ge 3$.

Let $G$ be a graph with $\left\lfloor (32m+2010)/3 \right\rfloor$ vertices. We proceed by contradiction, assuming that both $G$ and its complement $\overline{G}$ do not contain $W_{2m+2}$ as a subgraph. 

We need the following two technical propositions. 
\begin{proposition}\label{threecliques}
  If a vertex $v$ satisfies $d(v) \ge (16m+1004)/3$, then in the neighborhood of $v$, there exist two vertices such that, after deleting these two vertices, the remaining neighbors of $v$ induce a subgraph that consists of three disjoint cliques. Furthermore, there are no edges between any two cliques; each clique contains at least $\left\lceil (4m+998)/3 \right\rceil$ vertices.
\end{proposition}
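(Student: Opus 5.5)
Let $H$ denote the subgraph induced by $N(v)$, so $|H|\ge (16m+1004)/3$. Since $G$ has no $W_{2m+2}$, $H$ contains no $C_{2m+1}$: such a cycle together with $v$ would give $W_{2m+2}$. Also $\overline{H}$ contains no $W_{2m+2}$. The plan is to squeeze $H$ between these two facts using the lemmas on weak pancyclicity, the same way as in the proof of Lemma~\ref{CycleWheelLemma}.

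\textbf{Step 1: minimum degree.} First I would show $\delta(H)$ is large. Take $w\in V(H)$ and let $T$ be its set of non-neighbours in $H$. If $|T|\ge R(C_{2m+1},C_{2m+1})=4m+1$ (Lemma~\ref{cycle}), then either $H[T]$ contains $C_{2m+1}$, which is forbidden, or $\overline{H}[T]$ contains $C_{2m+1}$, which together with $w$ gives a $W_{2m+2}$ in $\overline{G}$. Hence $d_H(w)\ge |H|-4m-1$. This gives
\[
\delta(H)\ge \frac{16m+1004}{3}-4m-1=\frac{4m+1001}{3},
\]
which is at least $|H|/4+250$ roughly, as long as $|H|$ is not much larger than the threshold. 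If $d(v)$ is larger, I would pass to a subset of $N(v)$ of exactly the threshold size, or simply redo the arithmetic.

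\textbf{Step 2: the $2$-connected nonbipartite case is impossible.} Suppose some block $B$ of $H$ (or $H$ itself) is $2$-connected, nonbipartite and has $\delta(B)\ge |B|/4+250$. Then Lemma~\ref{Brandt2lemma} makes $B$ weakly pancyclic from length $3$ or $4$ up to $c(B)$, possibly missing only length $5$. Lemma~\ref{Diraclemma} gives $c(B)\ge \min\{2\delta(B),|B|\}\ge 2m+1$. So $B$ contains $C_{2m+1}$, a contradiction. Bipartite pieces are also impossible: a part of size at least $2m+1$ in a bipartite component would give, in $\overline{H}$, a clique large enough to contain $W_{2m+2}$.

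\textbf{Step 3: cut-vertex structure.} By Step 2, $H$ is not $2$-connected, or its large pieces fail the hypotheses. I would proceed as in Lemma~\ref{CycleWheelLemma}, deleting at most two vertices (cut vertices) to split $H$ into components. Each component has minimum degree at least about $(4m+1001)/3-2$, so each has at least $\lceil(4m+998)/3\rceil$ vertices. Next I bound the number of components. If there were four or more, I would pick one vertex from one component and suitable vertices from the others; every edge between them lies in $\overline{H}$, and this yields a $W_{2m+2}$ in $\overline{H}$. A complete multipartite structure with enough parts of size at least $m+1$ gives an even cycle $C_{2m+1}$-rim, and I would take care with parity here. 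If there were only one or two components, some component would have at least $|H|/2$ vertices. Its non-neighbourhood bound then forces $\delta\ge |C|/2$, and Lemma~\ref{BondyLemma} gives a $C_{2m+1}$, a contradiction. So there are exactly three components. Finally, each component $C$ must be a clique: if $x,y\in C$ are non-adjacent, then $x$, $y$ and $m$-sets from each of the other two components form a $W_{2m+2}$ in $\overline{H}$, built with $x$ as the hub.

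\textbf{Main obstacle.} The delicate step is Step 3. I must justify that deleting at most two vertices always suffices, using Lemma~\ref{Brandt2lemma} applied to blocks together with the degree bound. I also need the counting that rules out four or more components and one or two components, while keeping the additive constants matching $(4m+998)/3$.
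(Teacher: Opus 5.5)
\textbf{Verdict: genuine gap.} Your Step~1 and your last two arguments (no four components; each component a clique) agree with the paper. But the step you flag as the main obstacle is left unproved, and the route you sketch for it does not work.

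\textbf{Why the Bondy route fails.} You want to exclude ``one or two components'' by applying Lemma~\ref{BondyLemma} to a component $C$ of size at least about $|H|/2$, but nothing available gives $\delta(C)\ge |C|/2$.
\begin{itemize}
\item The only size constraint is $|C|\le 4m$: a vertex of the other component is non-adjacent to all of $C$, and $\Delta(\overline{H})\le 4m$. At the threshold $d(v)\approx(16m+1004)/3$ the counting allows $|C|$ to be about $4m$.
\item Inside $C$ you only know $\delta(C)\ge |H|-4m-3$, which at the threshold is only about $(4m+995)/3$.
\item The sharper local argument below gives $\delta(C)\ge |C|-2m-2$, which is below $|C|/2$ for every $|C|\le 4m$.
\end{itemize}
So for $|C|$ near $4m$, neither bound reaches the required $|C|/2\approx 2m$ once $m$ exceeds about $500$. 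That includes the whole range $m\ge 502$, which is exactly where Theorem~\ref{thmwheel-2} improves on Theorem~\ref{thmwheel-1}. Bondy's threshold $n/2$ is too strong here. What works is the threshold $n/4+250$ of Lemma~\ref{Brandt2lemma}, and that lemma needs $2$-connectivity. Applying it to blocks of $H$, as in your Step~2, does not fix this: a block need not inherit a large minimum degree, since a cut vertex may have few neighbours in a given block.

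\textbf{The missing idea.} The paper proves the following claim. Let $u$ be a cut vertex of $H$ (any vertex, if $H$ is disconnected). If $H-u$ has exactly two components $H_1,H_2$ with $|H_1|\ge|H_2|$, they cannot both be $2$-connected. The argument runs as follows.
\begin{itemize}
\item Components of $H-u$ lose at most one from each degree, so $\delta(H_1)\ge\delta(H)-1\ge(4m+998)/3$.
\item Suppose some $x\in V(H_1)$ had $2m+2$ non-neighbours in $H_1$. They cannot be pairwise adjacent in $G$, since that gives $K_{2m+2}\supseteq W_{2m+2}$. So two of them, $y$ and $z$, are non-adjacent. Take $m+1$ of these non-neighbours, including $y,z$, together with any $m$ vertices of $H_2$. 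In $\overline{G}$ they span $K_{m+1,m}$ plus the edge $yz$, which contains $C_{2m+1}$, and $x$ is a hub for it. Hence $\delta(H_1)\ge|H_1|-2m-2$.
\item The two degree bounds together give $\delta(H_1)\ge|H_1|/4+250$ whatever $|H_1|$ is.
\item $H_1$ is nonbipartite: a part with $m+2$ vertices would be a $K_{m+2}$ in $\overline{G}$, completely joined to any $m$ vertices of $H_2$, and this contains $W_{2m+2}$.
\item Lemmas~\ref{Brandt2lemma} and~\ref{Diraclemma} then put a $C_{2m+1}$ in $H_1$, a contradiction.
\end{itemize}
Thus, when $H-u$ has exactly two components, one of them has a cut vertex $w$, and $H-\{u,w\}$ has at least three components. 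This is exactly why two deletions suffice.

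\textbf{Smaller repairs.}
\begin{itemize}
\item Your bipartite argument needs the same $K_{m+2}$ device. As written it requires a clique of order $2m+2$ (not $2m+1$) in $\overline{G}$. For large $m$ you have no guarantee that a bipartite component has a part that large.
\item In Step~1, do not pass to a subset of $N(v)$, since the conclusion concerns all of $N(v)$. Simply note that $|H|-4m-1\ge|H|/4+250$ holds for every $|H|\ge(16m+1004)/3$.
\end{itemize}
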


By the pigeonhole principle, for any vertex $v_1$, either $d(v_1)\ge (16m+1004)/3$ in $G$, or $d(v_1)\ge (16m+1004)/3$ in $\overline{G}$. By symmetry, assume the former holds. In the neighborhood of $v_1$, denote the three cliques obtained by Proposition~\ref{threecliques}  as $V_1$, $V_2$, and $V_3$.

\begin{proposition}\label{secvertex}
  If there exists a vertex $v_2 \in V_1 \cup V_2 \cup V_3$ such that $d(v_2) \ge (16m+1004)/3$, then there exists a monochromatic $W_{2m+2}$ in $G$.
\end{proposition}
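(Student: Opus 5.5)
Without loss of generality let $v_2\in V_1$ with $d(v_2)\ge (16m+1004)/3$ in $G$; the other cases are symmetric. I will use the structure of $N(v_1)$ from Proposition~\ref{threecliques} together with Proposition~\ref{threecliques} applied to $v_2$ itself. Throughout, the standing assumption is that neither $G$ nor $\overline{G}$ contains $W_{2m+2}$, and the aim is to derive a contradiction.

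\textbf{Step 1.} Apply Proposition~\ref{threecliques} to $v_2$. After deleting two vertices, $N(v_2)$ splits into three disjoint red cliques $U_1,U_2,U_3$ with no edges between them. Each $U_j$ has at least $\lceil (4m+998)/3\rceil$ vertices.

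\textbf{Step 2.} Compare this with the structure around $v_1$. Since $V_1$ is a clique containing $v_2$, we have $V_1\setminus\{v_2\}\subseteq N(v_2)$, and $|V_1|-1\ge (4m+995)/3$. Moreover $v_1\in N(v_2)$ and $v_1$ is adjacent to all of $V_1$. Hence, apart from at most two exceptional vertices, $(V_1\setminus\{v_2\})\cup\{v_1\}$ must lie inside a single $U_j$, say $U_1$. Otherwise there would be an edge between two of the cliques $U_j$, which is forbidden.

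\textbf{Step 3.} Exploit the three-clique structure to build a wheel.
\begin{enumerate}[(a)]
\item If $U_1$ is large, say $|U_1|\ge 2m+1$, then $U_1$ is a clique of size at least $2m+1$ inside $N(v_2)$. A $C_{2m+1}$ inside $U_1$ together with the hub $v_2$ gives a red $W_{2m+2}$. So every $U_j$, and likewise every $V_i$, has fewer than $2m+1$ vertices.
\item Since $|U_1|,|U_2|,|U_3|\ge (4m+998)/3$, we have $|U_2\cup U_3|\ge (8m+1996)/3$, and all edges between $U_2$ and $U_3$ are blue.
\item The vertex $v_1$ lies in $U_1$, so $v_1$ has no red neighbour in $U_2\cup U_3$. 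Thus $U_2\cup U_3$ lies in $\overline{N}(v_1)$, apart from at most two vertices of $N(v_1)$ that were deleted.
\item Therefore $v_1$ is joined in blue to all of $U_2\cup U_3$. The blue graph on $U_2\cup U_3$ contains the complete bipartite graph $K_{|U_2|,|U_3|}$, and both sides have at least $m+1$ vertices. It therefore contains an even cycle $C_{2m}$, but we need $C_{2m+1}$.
\item To get an odd cycle, I will add one extra blue edge. I look for a vertex $x\in V_2\cup V_3$ (a red neighbour of $v_1$ that is not adjacent to $v_2$). Such $x$ has many blue neighbours in $U_2\cup U_3$, by the degree and clique-size constraints.
\item With such an $x$, a blue odd cycle can be formed through $x$ inside $U_2\cup U_3\cup\{x\}$, or a hub other than $v_1$ can be used. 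For the latter, take a vertex $y\in U_1$ that is blue to all of $U_2\cup U_3$ and also blue to some pair forming a triangle-closing path; then $y$ is the hub of a blue $W_{2m+2}$ whose rim is a $C_{2m+1}$ in the blue graph on $U_2\cup U_3\cup V_2$.
\end{enumerate}

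\textbf{Main obstacle.} The main obstacle is producing the odd rim $C_{2m+1}$ in blue, since the bipartite blue structure between cliques only yields even cycles. My first attempt is to use the third clique. The sets $V_2$ and $V_3$ (from $v_1$) and $U_2$ and $U_3$ (from $v_2$) together form at least three mutually blue clique classes: for example $V_2$, $U_2$, $U_3$, whose pairwise relations are forced to be blue by the clique and no-edge structure. The blue graph between three cliques of sizes at least $m$ is complete tripartite, hence contains $C_{2m+1}$. A common blue neighbour of all three classes, such as a suitable vertex of $U_1$ or $V_1$, would then serve as the hub. Checking that the pairwise relations among $V_2$, $U_2$, $U_3$ are indeed essentially all blue will use the clique-size bound $|V_i|,|U_j|<2m+1$ from Step 3(a), together with the fact that a red vertex of $V_2$ in $U_2$ would make $U_2$ and $V_1$ red-connected.
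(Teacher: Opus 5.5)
There is a genuine gap, and it sits exactly at what you call the main obstacle: the blue rim $C_{2m+1}$ is never produced (here red means $G$ and blue means $\overline{G}$).

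Your final paragraph needs the edges between $V_2$ and $U_2\cup U_3$ to be essentially all blue, so that $V_2,U_2,U_3$ span a complete tripartite blue graph. Nothing you have established forces this. The set $V_2$ lies outside $N(v_2)$, while $U_2$ and $U_3$ lie inside it, and neither three-clique decomposition says anything about the edges between them. A priori they can be any mixture of red and blue; the only restriction on red is that no red $K_{2m+2}$ may appear.

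The reason you offer also fails. Every vertex of $V_1$ other than $v_2$ and the deleted vertices lies in $U_1$, so a red edge between $V_2$ and $U_2$ touches neither $V_1$ nor $U_1$. It creates no forbidden connection and contradicts nothing by itself. Steps 3(e)--(f) are similarly only gestures (``many blue neighbours'', ``a triangle-closing path'') and name no configuration that provably closes an odd cycle.

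There is also a hub problem. The vertex $v_1$ is red to all of $V_2\cup V_3$, so it cannot be the hub of a rim through $V_2$. An arbitrary vertex of $U_1$ need not be blue to $V_2$ either. The hub has to be a vertex of $V_1\cap U_1$; your Step 2 supplies such vertices, but you never single one out.

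The missing idea is a dichotomy rather than a forcing claim. Either the few blue cross edges needed for an odd cycle exist, or the cross edges are red in a structured way that, combined with the lower bound $\lceil (4m+998)/3\rceil$ on the clique sizes, yields a red $K_{2m+2}$.

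The paper does this with four classes. It takes the hub $v_5\in V_1\cap U_1$, which is blue to all of $V_2\cup V_3\cup U_2\cup U_3$. If distinct $u_1,u_3\in U_2$ have blue neighbours $u_2\in V_2$ and $u_4\in V_3$, take the path $u_2u_1u_6u_3u_4$ with $u_6\in U_3$. Closing it by a path of length $2m-3$ in the complete blue bipartite graph between $V_2$ and $V_3$ gives a blue $C_{2m+1}$ around $v_5$. Otherwise $U_2$ is completely red to $V_2$, or all but at most one vertex of $U_2$ is completely red to $V_3$. Either way a red clique of order at least $2m+3$ results.

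Your three-class version can be repaired in the same spirit. Suppose some $a\in V_2$ has a blue neighbour $b\in U_2$ and a blue neighbour $c\in U_3$. Then $a$ together with a $b$--$c$ path of length $2m-1$ in the blue $K_{|U_2|,|U_3|}$ is a blue $C_{2m+1}$, with a vertex of $V_1\cap U_1$ as hub. Otherwise every vertex of $V_2$ is completely red to $U_2$ or completely red to $U_3$. Then at least half of $V_2$, together with the corresponding $U_j$, forms a red clique of order at least $(4m+998)/2>2m+1$.

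As written, however, the proposal contains neither argument.
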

To make the arguments easier to follow, we postpone the proofs of the two propositions until the end of this section.

If there exists a vertex of degree at least $(16m+1004)/3$ in $V_1 \cup V_2 \cup V_3$, then by Proposition~\ref{secvertex}, the proof is complete.

If there is no vertex of degree at least $(16m+1004)/3$ in $V_1 \cup V_2 \cup V_3$, then for every vertex $w$ in $V_1 \cup V_2 \cup V_3$, we have
\[
d(w) \ge (16m+1004)/3 \text{ in } \overline{G}.
\]
We select four vertices $w_1, w_2, w_3, w_4$ from $V_1 \cup V_2 \cup V_3$ such that $w_1$ is not adjacent to the other three vertices $w_2$, $w_3$, and $w_4$. According to Proposition~\ref{threecliques}, in the graph $\overline{G}$, after removing at most two vertices from the neighborhood of $w_1$, the induced subgraph of the remaining vertices forms three disjoint cliques. Let these cliques be denoted as $V_1'$, $V_2'$, and $V_3'$. Clearly, at least one of $w_2$, $w_3$, or $w_4$ belongs to $V_1' \cup V_2' \cup V_3'$, without loss of generality, assume $w_2 \in V_1' \cup V_2' \cup V_3'$. Then, the vertex $w_1$ corresponds to the vertex $v_1$ as defined before Proposition~\ref{secvertex}, and $w_2$ corresponds to the vertex $v_2$ in Proposition~\ref{secvertex}, while $V_1' \cup V_2' \cup V_3'$ corresponds to $V_1 \cup V_2 \cup V_3$ in Proposition~\ref{secvertex}. By Proposition~\ref{secvertex}, we can thus find a monochromatic $W_{2m+2}$, which completes the proof.
\end{proof}

\begin{proof}[\textbf{\textit{Proof of Theorem~\ref{thmwheel-3}}}]
  The upper bound is equivalent to proving $R(W_{2m+1}) \le 8m + \min\{m,664\}$ for $m \ge 4$.

  Consider a graph $G$ with $8m + \min\{m,664\}$ vertices and an arbitrary vertex $v$ in $G$. By the pigeonhole principle, either $d(v) \ge \min\{\left\lfloor 9m/2\right\rfloor,4m+332\}$ in $G$ or $d(v) \ge \min\{\left\lfloor 9m/2\right\rfloor,4m+332\}$ in $\overline{G}$. Without loss of generality, we assume the former holds. Let $H$ denote the subgraph induced by the neighborhood $N(v)$.

By Lemma~\ref{CycleWheelLemma}, we have $R(C_{2m},W_{2m+1}) \le \min\{\left\lfloor 9m/2\right\rfloor,4m+332\}$ for $m \ge 2$. Therefore, either $H$ contains a subgraph $C_{2m}$, which combined with vertex $v$ forms a $W_{2m+1}$; or $\overline{H}$ contains $W_{2m+1}$ as a subgraph. In either case, we obtain the desired $W_{2m+1}$, thus completing the proof.
\end{proof}
\vskip 2mm
\begin{proof}[\textbf{\textit{Proof of Proposition~\ref{threecliques}}}]
  Let $H$ be the subgraph induced by the neighborhood $N(v)$ of vertex $v$. Let $u$ be a vertex of minimum degree in $H$. If $\Delta(\overline{H}) \ge 4m+1$, then there are at least $4m+1$ vertices adjacent to $v$ but not adjacent to $u$. By Lemma~\ref{cycle}, the subgraph induced by these $4m+1$ vertices or its complement contains $C_{2m+1}$ as a subgraph. Thus, either $H$ contains a $C_{2m+1}$, which together with $v$ forms a $W_{2m+2}$, or $\overline{G}$ contains a $C_{2m+1}$, which together with $u$ forms a $W_{2m+2}$ in the complement. Both cases lead to a contradiction. Therefore, $\Delta(\overline{H}) \le 4m$, which implies $\delta(H) \ge |H|-4m-1$. It is easy to verify that
  \[
  \delta(H) \ge |H|/4 + 250 \ge (4m+1001)/3.
  \]
  If $H$ is a bipartite graph, then one part must contain at least $2m+2$ vertices. These vertices induce a complete graph in $\overline{G}$, which contains $W_{2m+2}$ as a subgraph, leading to a contradiction. Hence, $H$ is not bipartite. If $H$ is 2-connected, by Lemmas~\ref{Brandt2lemma} and~\ref{Diraclemma}, we can find a cycle $C_{2m+1}$ in $H$, which, together with vertex $v$, forms a $W_{2m+2}$, leading to a contradiction.

  If $H$ is not 2-connected, then there exists a vertex $u$ such that $H-u$ is disconnected. Note that if $H$ is already disconnected, removing any vertex $u$ will still leave $H-u$ disconnected. We have the following claim.

  \begin{claim}
    Either the subgraph $H-u$ has at least three connected components, or it has exactly two connected components, with at least one of them not being 2-connected.
  \end{claim}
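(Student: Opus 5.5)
The plan is to argue by contradiction. Suppose $H-u$ has exactly two components, with vertex sets $A$ and $B$, and that both $H[A]$ and $H[B]$ are 2-connected. I will show that the larger component contains a $C_{2m+1}$, which together with $v$ gives a $W_{2m+2}$ in $G$ and contradicts our standing assumption.

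\emph{Step 1: degree and size bounds.} From the proof so far we have $\delta(H)\ge |H|-4m-1$ and $\Delta(\overline{H})\le 4m$. A vertex of $A$ has no neighbours in $B$, and loses at most the vertex $u$ from its neighbourhood in $H$. So every vertex of $A$ has at least $|H|-4m-2$ neighbours inside $A$, and likewise for $B$. Since $|H|\ge d(v)\ge (16m+1004)/3$, this gives
\[
\delta(H[A]),\ \delta(H[B]) \ge (4m+998)/3 .
\]
A vertex of $A$ is non-adjacent in $H$ to all of $B$, so $|B|\le \Delta(\overline{H})\le 4m$, and similarly $|A|\le 4m$. Take $A$ to be the larger component. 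Then
\[
|A|\ge (|H|-1)/2 \ge (16m+1001)/6 \ge 2m+1 .
\]

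\emph{Step 2: $H[A]$ is not bipartite.} Suppose it were, with parts $X$ and $Y$. Every vertex of $Y$ has at least $\delta(H[A])\ge (4m+998)/3\ge m+2$ neighbours, all lying in $X$, so $|X|\ge m+2$. In $\overline{G}$, the set $X$ is a clique and is completely joined to $B$, and $|B|\ge \delta(H[B])+1\ge m$. Fix $x\in X$ as the centre. Alternate between $m$ vertices of $B$ and $m$ vertices of $X\setminus\{x\}$, and close the cycle by using one further edge inside $X\setminus\{x\}$. This gives a $C_{2m+1}$ in $\overline{G}$ avoiding $x$, so $\overline{G}$ contains a $W_{2m+2}$, a contradiction.

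\emph{Step 3: the cycle.} Now $H[A]$ is 2-connected and nonbipartite. Since $|A|\le 4m$, we have $|A|/4+250\le m+250\le (4m+998)/3\le \delta(H[A])$. Hence Lemma~\ref{Brandt2lemma} applies: $H[A]$ has cycles of every length from $4$ up to its circumference, possibly missing only the $5$-cycle, which is irrelevant because $2m+1\ge 7$. By Lemma~\ref{Diraclemma}, the circumference is at least $\min\{2\delta(H[A]),|A|\}$. Both terms are at least $2m+1$, by the bound in Step 1 and by the choice of $A$ as the larger component. So $H[A]$ contains $C_{2m+1}$, and adding $v$ yields a $W_{2m+2}$ in $G$, a contradiction.

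I expect the main obstacle to be the numerical bookkeeping: making sure $|A|\ge 2m+1$ holds for all $m\ge3$. This is why the argument uses the larger component rather than an arbitrary one, since an arbitrary component is only guaranteed to have about $(4m+1001)/3$ vertices, which can be less than $2m+1$. A second point to check is that the bipartite case is excluded by the part-size argument of Step 2, not by a direct independence-number bound.
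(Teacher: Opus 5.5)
Your proof is correct. Its skeleton is the paper's: argue by contradiction, pass to the larger component, exclude bipartiteness, then use Lemmas~\ref{Brandt2lemma} and~\ref{Diraclemma} to find a $C_{2m+1}$ that $v$ turns into a $W_{2m+2}$. The difference is in how you verify the degree hypothesis of Lemma~\ref{Brandt2lemma}, and your route is shorter.

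\textbf{The paper's route.} It first proves $\Delta(\overline{H_1})\le 2m+1$ by an extra argument. It takes a minimum-degree vertex $x$ of $H_1$ and finds a non-edge $yz$ of $G$ among the non-neighbours of $x$. Together with $m$ vertices of $H_2$, this gives a $C_{2m+1}$ in $\overline{G}$ around $x$. The paper then splits into two cases. When $|H_1|\ge (8m+1008)/3$ it uses $\delta(H_1)\ge |H_1|-2m-2$. When $|H_1|\le (8m+1007)/3$ it uses $\delta(H_1)\ge (4m+998)/3$.

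\textbf{Your route.} Every vertex of one component is non-adjacent to the whole other component, so $|A|,|B|\le \Delta(\overline{H})\le 4m$. Hence $|A|/4+250\le m+250\le (4m+998)/3$, so the paper's second estimate alone covers every possible size of the component. This makes the first case and the auxiliary $\Delta(\overline{H_1})\le 2m+1$ argument unnecessary. That argument gives near-completeness of $H_1$, which this claim does not need.

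\textbf{Bipartite case.} You get a part of size at least $m+2$ from the minimum degree, where the paper uses $|H_1|\ge (8m+501)/3$. Either works.

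One wording fix in Step~2: the odd cycle uses $m$ vertices of $B$ and $m+1$ vertices of $X\setminus\{x\}$, for example $x_1b_1x_2b_2\cdots x_mb_mx_{m+1}x_1$. You need this extra vertex, not just an extra edge, and it is exactly why $|X|\ge m+2$ is required.
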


  \begin{subproof}
    We proceed by contradiction, assuming that $H-u$ has exactly two connected components and both are 2-connected. This will lead to a contradiction.

    Let the two connected components be $H_1$ and $H_2$, with $|H_1| \ge |H_2|$. Thus, $|H_1| \ge (8m+501)/3$. Let $x$ be a vertex of minimum degree in $H_1$. If $\Delta(\overline{H_1}) \ge 2m+2$, then there must exist two vertices $y$ and $z$ in the non-neighbor set of $x$ such that $yz \in E(\overline{G})$. Otherwise, the non-neighbor set of $x$ induces a clique $K_{2m+2}$, which contains $W_{2m+2}$ as a subgraph, leading to a contradiction. Take $m+1$ vertices, including $y$ and $z$, from the non-neighbor set of $x$, and select $m$ vertices arbitrarily from $H_2$. These $2m+1$ vertices induce a $C_{2m+1}$ in $\overline{G}$, which, combined with vertex $x$, forms a $W_{2m+2}$ in $\overline{G}$, leading to another contradiction. Therefore, $\Delta(\overline{H_1}) \le 2m+1$, implying
    \[
    \delta(H_1) \ge |H_1|-2m-2.
    \]

    If $H_1$ is bipartite, then since $|H_1| \ge (8m+501)/3$, one part of $H_1$ must contain at least $m+2$ vertices. This part forms a complete graph $K_{m+2}$ in $\overline{G}$. Select $m$ vertices arbitrarily from $H_2$. Since there are no edges between these $m$ vertices and $K_{m+2}$, $\overline{G}$ contains a complete multipartite graph with $m+3$ parts, one of which contains $m$ vertices, and each of the others contains one vertex. This graph contains $W_{2m+2}$ as a subgraph. This contradiction shows that $H_1$ is not bipartite.

    If $|H_1| \ge (8m+1008)/3$, then $\delta(H_1) \ge |H_1|/4 + 250$. If $|H_1| \le (8m+1007)/3$, since $\delta(H_1) \ge \delta(H) - 1 \ge (4m+998)/3$, we still have $\delta(H_1) \ge |H_1|/4 + 250$. By Lemmas~\ref{Brandt2lemma} and~\ref{Diraclemma}, we can find a cycle $C_{2m+1}$ in $H_1$, which, together with vertex $v$, forms a $W_{2m+2}$, leading to a final contradiction.
  \end{subproof}

  Based on the above claim, if $H-u$ has exactly two connected components, then one of them contains a vertex $w$ such that $H-\{u, w\}$ has at least three connected components. If $H-u$ has at least three connected components, then removing any vertex $w$ from $H-u$ will leave $H-\{u, w\}$ with at least three connected components. Since $\delta(H-\{u, w\}) \ge (4m+995)/3$, each connected component contains at least
  \[
  \delta(H-\{u, w\}) + 1 \ge \left\lceil (4m+998)/3 \right\rceil
  \]
  vertices.

  If $H-\{u, w\}$ has at least four connected components, select $m$ vertices from each of two components, and one vertex from each of the other two components. These $2m+2$ vertices induce a $K_{1,1,m,m}$ in $\overline{G}$, meaning that $\overline{G}$ contains $W_{2m+2}$ as a subgraph, leading to a contradiction. Therefore, $H-\{u, w\}$ must have exactly three connected components.

  We claim that each connected component of $H-\{u, w\}$ is a clique. Otherwise, assume $w_1$ and $w_2$ belong to the same component, and $w_1w_2 \in E(\overline{G})$. Select $m$ vertices from each of the other two components. Together with $w_1$ and $w_2$, the induced subgraph in $\overline{G}$ contains $W_{2m+2}$ as a subgraph, leading to yet another contradiction. This completes the proof of the proposition.
\end{proof}

\begin{proof}[\textbf{\textit{Proof of Proposition~\ref{secvertex}}}]
  Without loss of generality, assume $v_2 \in V_1$. By Proposition~\ref{threecliques}, in the neighborhood of vertex $v_2$, after deleting at most two vertices, the remaining induced subgraph consists of three disjoint cliques. Denote these three cliques as $V_4$, $V_5$, and $V_6$. If one vertex is deleted from the neighborhood of $v_2$, denote it as $v_3$; if two vertices are deleted, denote them as $v_3$ and $v_4$. Let $V_0 = V_1 \setminus \{v_2, v_3, v_4\}$. Since $V_0 \subseteq N(v_2)$, and there are no edges between $V_4$, $V_5$, and $V_6$, it follows that $V_0$ is contained in one of these cliques. Without loss of generality, assume $V_0 \subseteq V_4$.
  
  Since every vertex in $V_5 \cup V_6$ is a neighbor of $v_2$, and every vertex in $V_2 \cup V_3$ is not a neighbor of $v_2$, the sets $V_2, V_3, V_5, V_6$ are four disjoint sets. Choose any vertex $v_5$ from $V_0$. It follows that there are no edges between $v_5$ and $V_2 \cup V_3 \cup V_5 \cup V_6$.
  
  If there is a vertex in $V_5$ that is not adjacent to some vertex in $V_2$, and another vertex in $V_5$ that is not adjacent to some vertex in $V_3$, for example, if $u_1u_2, u_3u_4 \in E(\overline{G})$ where $u_1, u_3 \in V_5$, $u_2 \in V_2$, and $u_4 \in V_3$, then the path $u_2u_1u_6u_3u_4$ exists in $\overline{G}$, where $u_6 \in V_6$. Since there are no edges between $V_2$ and $V_3$, a path of length $2m-3$ can be found between $u_2$ and $u_4$ in $\overline{G}[V_2 \cup V_3]$. This path, together with $u_2u_1u_6u_3u_4$, forms a cycle $C_{2m+1}$ in $\overline{G}$, which, combined with the central vertex $v_5$, forms a $W_{2m+2}$ in $\overline{G}$.

  From the previous analysis, either every vertex in $V_5$ is adjacent to every vertex in $V_2$, or every vertex in $V_5$, except for at most one, is adjacent to every vertex in $V_3$. By Proposition~\ref{threecliques}, $V_2$, $V_3$, and $V_5$ are cliques with at least $m+2$ vertices. Therefore, we obtain a clique with at least $2m+3$ vertices, which contains $W_{2m+2}$ as a subgraph. This completes the proof of the proposition.
\end{proof}

\section{Ramsey numbers of kipases}\label{Seckipas}
The proof of the upper bound relies on the following two classic lemmas.
\begin{lemma}[Gerencs\'er and Gy\'arf\'as~\cite{Gerencser1967}]\label{pathlemma}
  $R(P_n,P_m)=n+\left\lfloor m/2 \right\rfloor-1$.
\end{lemma}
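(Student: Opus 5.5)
We may assume $n\ge m\ge 2$, which is the range in which the formula is symmetric-free and in which we use it. The plan is to give an explicit coloring for the lower bound and an induction on $m$ for the upper bound.

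\emph{Lower bound.} Take $N=n+\lfloor m/2\rfloor-2$ vertices split into a set $A$ with $|A|=n-1$ and a set $B$ with $|B|=\lfloor m/2\rfloor-1$.
\begin{itemize}
\item Color all edges inside $A$ and all edges inside $B$ red, and all edges between $A$ and $B$ blue.
\item The red graph is $K_{n-1}\cup K_{\lfloor m/2\rfloor-1}$. Each component has fewer than $n$ vertices, so there is no red $P_n$.
\item The blue graph is $K_{n-1,\lfloor m/2\rfloor-1}$. Its longest path alternates sides, so it has at most $2(\lfloor m/2\rfloor-1)+1\le m-1$ vertices, and there is no blue $P_m$.
\end{itemize}

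\emph{Upper bound.} Let $N=n+\lfloor m/2\rfloor-1$ and induct on $m$, with $n$ fixed.
\begin{itemize}
\item The base cases $m\le 3$ are immediate. For $m\le 3$ we have $N=n$, and if there is no blue edge at all (or no blue $P_3$), the red graph is $K_n$ minus a matching, which contains a red $P_n$ for $n\ge 2$ apart from the trivial small checks.
\item For $m\ge 4$, apply the induction hypothesis to $P_n$ versus $P_{m-2}$ on $N-1\ge n+\lfloor (m-2)/2\rfloor-1$ vertices. We may assume there is no red $P_n$, so we obtain a blue path $Q$ on $m-2$ vertices.
\item We then try to extend $Q$ by two vertices. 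Among all blue paths on $m-2$ vertices, choose one whose complement $S=V\setminus V(Q)$ contains the most red structure. Here $|S|=n+\lfloor m/2\rfloor-m+1\ge n-\lfloor m/2\rfloor+1$.
\item If an endpoint of $Q$ sends a blue edge into $S$, extend at that end. Repeat this at both ends, possibly re-routing through a blue neighbor, until we reach $m$ vertices.
\item Otherwise both endpoints $x,y$ of $Q$ are red-complete to $S$. A red path in $S$ can then be extended through $x$ and $y$, and further through other vertices of $Q$ that are red to $S$.
\end{itemize}

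The main obstacle is this last step: we must show that the red graph on $S\cup V(Q)$ contains a path on $n$ vertices whenever the blue path cannot be lengthened.

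\begin{itemize}
\item First, use the induction hypothesis (or Lemma~\ref{cycle}-type counting) again on $S$ to get a long red path inside $S$.
\item Second, count the vertices of $Q$ adjacent in blue to neither end of a maximal red path. The alternation structure of $Q$ forces roughly half of its vertices, at least $\lfloor m/2\rfloor-1$ of them, to be insertable into the red path. This is exactly where the term $\lfloor m/2\rfloor$ in $N$ enters.
\end{itemize}

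If this direct route gets messy, the fallback is the original Gerencsér--Gyárfás device. One maintains a vertex-disjoint red path and blue path sharing an endpoint, adds the remaining vertices one at a time, and shows that each added vertex lengthens one of the two paths. One also checks that the blue path grows at most once for every two steps, since its endpoint alternates colors. This yields the bound $n+\lfloor m/2\rfloor-1$.
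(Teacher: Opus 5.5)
\textbf{Verdict: genuine gap in the upper bound.} The paper gives no proof of this lemma. It is quoted from Gerencs\'er and Gy\'arf\'as and used only with $n=m$. Your restriction to $n\ge m$ is a necessary hypothesis, not a normalization: for example, $R(P_2,P_3)=3$ while the formula gives $2$. Your lower-bound colouring (red $K_{n-1}\cup K_{\lfloor m/2\rfloor-1}$, blue complete bipartite graph between the parts) is correct and standard.

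The upper bound is not established. The step you call the main obstacle is the entire content of the theorem, and what you write there does not go through.
\begin{enumerate}
\item The count is off. $|S|=N-(m-2)=n-\lceil m/2\rceil+1$, which for odd $m$ equals $n-\lfloor m/2\rfloor$, not $n-\lfloor m/2\rfloor+1$. So your bookkeeping $|S|+(\lfloor m/2\rfloor-1)=n$ fails by one for odd $m$.
\item Nothing forces $S$ to contain a red spanning path, or any long red path. You only know that the two ends of $Q$ are red to $S$; the edges inside $S$ are arbitrary.
\item The induction hypothesis cannot be applied to $S$ with parameters $(n,m-2)$, since $|S|<N-1=R(P_n,P_{m-2})$. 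With smaller parameters it gives either a red path too short for your purpose, or a blue path in $S$ that cannot be attached to $Q$ because the ends of $Q$ are red to $S$. Handling that requires rerouting $Q$ through its interior vertices, which is exactly the case you leave unanalysed.
\item The intermediate case is not handled: one end extends, giving a blue $P_{m-1}$ whose two ends are both red to the remaining $|S|-1$ vertices, and you now need a red $P_n$ from even fewer outside vertices.
\item The claim that \emph{the alternation structure of $Q$} makes $\lfloor m/2\rfloor-1$ of its vertices insertable is not argued. The blue edges of $Q$ say nothing about red edges from its interior vertices to the ends of a red path in $S$.
\item \emph{Choose $Q$ with the most red structure in $S$} is not a defined extremal choice, and no later step uses it.
\end{enumerate}

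The fallback does not supply the missing factor $1/2$ either. The Gerencs\'er--Gy\'arf\'as insertion device adds vertices one at a time to a path made of a red segment followed by a blue segment. It proves that every $2$-coloured $K_N$ has a Hamiltonian path that is a red path followed by a blue path. With no red $P_n$ and no blue $P_m$ this gives only $N\le (n-1)+(m-1)-1$, that is, $R(P_n,P_m)\le n+m-2$. In that procedure, a new vertex whose edge to the free end of the blue segment is blue simply extends the blue segment. So the blue path can grow at every step, as in an all-blue colouring. Your claim that it \emph{grows at most once every two steps} has no basis, and it is exactly the statement the sharp bound needs.

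What is missing is a real mechanism forcing each vertex beyond the first $n-1$ to pay for two blue vertices. Your own extremal colouring shows the shape: there the blue path alternates between the small part and the large red clique. Without such a mechanism you have a correct lower bound and an unproved upper bound.
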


\begin{lemma}[Dirac~\cite{Dirac1952}]\label{longestpath}
  Let $G$ be a connected graph of order $n\ge 3$. Then $G$ has a path of order $\min\{2\delta(G)+1,|G|\}$.
\end{lemma}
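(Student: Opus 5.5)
We plan to prove Lemma~\ref{longestpath} with the classical longest-path argument (Dirac, with a Pósa-type rotation). Let $P=x_0x_1\cdots x_k$ be a longest path in $G$, so $P$ has order $k+1$. If $k+1\ge 2\delta(G)+1$, then an initial segment of $P$ is a path of order $2\delta(G)+1$ and we are done. So we assume $k+1\le 2\delta(G)$, that is, $k\le 2\delta(G)-1$, and aim to show that $k+1=|G|$.

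The first step uses the maximality of $P$. Every neighbor of $x_0$ and every neighbor of $x_k$ lies on $P$, since otherwise $P$ could be extended. Define
\[
S=\{\,i\in\{0,\dots,k-1\}: x_0x_{i+1}\in E(G)\,\}, \qquad T=\{\,i\in\{0,\dots,k-1\}: x_ix_k\in E(G)\,\}.
\]
Then $|S|\ge\delta(G)$ and $|T|\ge\delta(G)$, and both sets lie in a ground set of size $k\le 2\delta(G)-1<|S|+|T|$. Hence there is an index $i\in S\cap T$.

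The second step builds a cycle on $V(P)$ from this index. For $i\in S\cap T$, the cycle is
\[
x_0x_1\cdots x_i\,x_k x_{k-1}\cdots x_{i+1}\,x_0 .
\]
This is a cycle of order $k+1$ containing all of $V(P)$. The degenerate choices are harmless: if $i=k-1$ or $i=0$, the cycle simply uses the edge $x_0x_k$. (We need $k\ge 2$ here. This holds because $G$ is connected of order $n\ge 3$, so a longest path has at least $3$ vertices.)

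The final step uses connectivity. Suppose $k+1<|G|$. Since $G$ is connected, some vertex $y\notin V(P)$ is adjacent to some vertex $x_j$ on the cycle. Opening the cycle at $x_j$ and prepending $y$ gives a path of order $k+2$, contradicting the maximality of $P$. Therefore $k+1=|G|$, and $P$ itself is a path of order $|G|=\min\{2\delta(G)+1,|G|\}$.

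The only delicate point is the counting in the first step: one must index $S$ and $T$ by the shifted positions so that both live in $\{0,\dots,k-1\}$. Once that is set up, the pigeonhole argument and the resulting crossing pair of edges are routine.
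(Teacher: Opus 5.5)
Your proof is correct. The pigeonhole count on the shifted index sets, the closing of $P$ into a cycle on $V(P)$, and the connectivity extension together form the classical proof of Dirac's lemma; the paper itself gives no proof and simply cites Dirac's 1952 paper. The only quibble is terminological: no Pósa rotation is actually used here, only the standard crossing-edge closing of a longest path into a cycle.
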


We first prove the upper bound of Theorem~\ref{thmkipas}.

\begin{lemma}
  $R(\widehat{K}_n)\le 4n-6$ for $n \ge 5$.
\end{lemma}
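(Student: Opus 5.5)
We follow the pattern of the proof of Theorem~\ref{thmwheel-1}, but with a path in place of a cycle. Recall $\widehat{K}_n=K_1+P_{n-1}$. Take any graph $G$ on $4n-6$ vertices and any vertex $v$. Since $d_G(v)+d_{\overline{G}}(v)=4n-7$, the pigeonhole principle gives, without loss of generality, $d_G(v)\ge 2n-3$. Let $H=G[N(v)]$, so $|H|\ge 2n-3$. If $H$ contains $P_{n-1}$, then together with $v$ it gives a $\widehat{K}_n$ in $G$. So we may assume $H$ is $P_{n-1}$-free, and we must produce a $\widehat{K}_n$ in $\overline{H}$, that is, a vertex $w$ of $H$ together with a path $P_{n-1}$ in $\overline{H}$ on $n-1$ vertices that are non-neighbors of $w$ in $H$.

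\textbf{Structure of $H$.} Each component $C$ of $H$ has no $P_{n-1}$. By Lemma~\ref{longestpath}, if $|C|\ge n-1$, then $C$ has a vertex of degree at most $(n-3)/2$ within $C$. This suggests choosing the center $w$ as a vertex of minimum degree in $H$, or in a suitable component. A $P_{n-1}$-free graph has few edges; by Erd\H{o}s--Gallai, $e(H)\le (n-3)|H|/2$. Hence $\delta(H)\le n-3$, and the non-neighborhood $U$ of $w$ in $H$ satisfies $|U|\ge |H|-1-(n-3)\ge n-1$.

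\textbf{Finding the path in $\overline{H}[U]$.} Let $u=|U|$. We need $\overline{H}[U]$ to contain $P_{n-1}$. Since $H[U]$ contains no $P_{n-1}$, Lemma~\ref{pathlemma} gives $R(P_{n-1},P_{n-1})=n-1+\lfloor (n-1)/2\rfloor-1$. So it suffices that $u\ge \lfloor 3(n-1)/2\rfloor-1$. Plain counting only guarantees $u\ge n-1$, so this step needs refinement. The route I would take is a case split on the components of $H$:
\begin{enumerate}[(i)]
\item If $H$ is disconnected, pick $w$ of minimum degree in a smallest component $C$. The non-neighbors of $w$ include all of $H-C$, and every vertex of $H-C$ is joined to every vertex of $C$ in $\overline{H}$. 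The complement of a disconnected graph is connected with high minimum degree, so an alternating-path argument should produce $P_{n-1}$ in $\overline{H}$ among these vertices.
\item If $H$ is connected, Lemma~\ref{longestpath} gives $\delta(H)\le (n-3)/2$. Then $u\ge |H|-1-(n-3)/2\ge (3n-5)/2$, which meets the Ramsey threshold above. Lemma~\ref{pathlemma} then yields $P_{n-1}$ in $\overline{H}[U]$, since $H[U]$ has none.
\end{enumerate}

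\textbf{Main obstacle.} The disconnected case (i) is the hard part, especially when a small component has minimum degree near $n-3$. I would first try to show directly that $\overline{H}[U]$ contains $P_{n-1}$ using that $U$ spans at least two components of $H$. Between different components all $\overline{H}$-edges are present, and inside each component the complement is dense because of the degree bound. If that fails, fall back on Lemma~\ref{longestpath} applied to the connected graph $\overline{H}[U]$, bounding its minimum degree below by $u-1-(n-3)$ together with the cross-component edges.
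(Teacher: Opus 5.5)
\textbf{There is a genuine gap in case (i).} Your case (ii) is correct: its Ramsey step is essentially the first step of the paper's proof. Case (i), however, is not proved, and the specific centre you propose there can fail.

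Take $H=K_{n-2}\cup K_{1,n-2}$. It has $2n-3$ vertices and contains no $P_{n-1}$, since the star's longest path has $3$ vertices. Its smallest component is $C=K_{n-2}$. Every $w\in C$ is adjacent to all of $C$, so the non-neighbourhood $U$ of $w$ is exactly the star. Then $\overline{H}[U]=K_{n-2}\cup K_1$, which has no $P_{n-1}$. The cross edges you mention never enter, because $C\setminus N_H[w]=\emptyset$. Your fallback also fails, since $\overline{H}[U]$ is disconnected. Here a leaf of the star would work as the centre, which points to the correct choice: a minimum-degree vertex of all of $H$.

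\textbf{Missing idea: split on $\delta(H)$, not on connectivity.} Your computation in (ii) uses connectivity only to force $\delta(H)$ to be small.

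If $\delta(H)\le\lceil (n-1)/2\rceil-1$, take $w$ to be a minimum-degree vertex of $H$, with no connectivity assumption. Then
\[|U|\ge 2n-4-\delta(H)\ge 2n-3-\lceil (n-1)/2\rceil=n-2+\lfloor (n-1)/2\rfloor=R(P_{n-1},P_{n-1}),\]
and Lemma~\ref{pathlemma} finishes exactly as in your case (ii).

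If instead $\delta(H)\ge\lceil (n-1)/2\rceil$, apply Lemma~\ref{longestpath} to each component. Any component with at least $n-1$ vertices would contain a path on $\min\{2\delta(H)+1,n-1\}=n-1$ vertices. Hence every component has at most $n-2$ vertices. Since $|H|\ge 2n-3$, there are at least three components, each of order at least $\lceil (n+1)/2\rceil$. Choose one vertex, $\lfloor (n-1)/2\rfloor$ vertices and $\lceil (n-1)/2\rceil$ vertices from three different components. In $\overline{H}$ these span $K_1+K_{\lfloor (n-1)/2\rfloor,\lceil (n-1)/2\rceil}$. The nearly balanced complete bipartite part has a Hamiltonian path, so this contains $\widehat{K}_n$.

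This is precisely the paper's argument. The Erd\H{o}s--Gallai edge bound in your proposal is correct but not needed.
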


\begin{proof}
Let $G$ be a graph with $4n-6$ vertices, and let $v$ be any vertex in $G$. By the pigeonhole principle, either $d(v) \ge 2n-3$ in $G$ or $d(v) \ge 2n-3$ in $\overline{G}$. By symmetry, we assume the former holds. Let $H$ be the subgraph induced by the set of neighbors of $v$.

If $\delta(H) \le \left\lceil (n-1)/2 \right\rceil -1$, then $\Delta(\overline{H}) \ge 2n-\left\lceil (n-1)/2 \right\rceil -3$. By Lemma~\ref{pathlemma}, we have
\[
\Delta(\overline{H}) \ge R(P_{n-1}, P_{n-1})\,.
\]
Let $u$ be a vertex of minimum degree in $H$. Then, in the subgraph induced by the nonneighbors of $u$, either there exists a $P_{n-1}$, which combined with $v$ forms $\widehat{K}_n$, or in its complement $\overline{H}$, there exists a $P_{n-1}$, which combined with $u$ forms $\widehat{K}_n$. Hence, we now assume
\[
\delta(H) \ge \left\lceil (n-1)/2 \right\rceil\,.
\]

If $H$ contains a connected component with at least $n-1$ vertices, then by Lemma~\ref{longestpath}, the longest path in this component has at least $\min\{2\delta(H)+1, n-1\}=n-1$ vertices. This path, together with vertex $v$, forms a $\widehat{K}_n$. Therefore, we assume that each connected component of $H$ has at most $n-2$ vertices. Since $|H|\ge 2n-3$, $H$ must have at least three connected components. Moreover, since each component has at least $\delta(H)+1 \ge \left\lceil (n+1)/2 \right\rceil$ vertices, we can select one vertex, $\left\lfloor (n-1)/2 \right\rfloor$ vertices, and $\left\lceil (n-1)/2 \right\rceil$ vertices from three components, respectively. These $n$ vertices induce a subgraph in $\overline{H}$ that must contain $\widehat{K}_n$ as a subgraph. This completes the proof of the upper bound.
\end{proof}

Next, we proceed to prove the lower bound of Theorem~\ref{thmkipas}.
\begin{lemma}
  For $n\ge 5$, we have $R(\widehat{K}_n)\ge (5n-6+\operatorname{sgn}'(n))/2$, where
  \[
    \operatorname{sgn}'(n) = \begin{cases}
    1 & \text{if } n \equiv 3 \pmod{4}, \\
    0 & \text{if } n \equiv 0 \pmod{2}, \\
    -1 & \text{if } n \equiv 1 \pmod{4}.
    \end{cases}
    \]
\end{lemma}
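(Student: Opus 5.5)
The lower bound needs a single explicit two-colouring. I will construct a graph $G$ on $N-1$ vertices, where $N=(5n-6+\operatorname{sgn}'(n))/2$, such that neither $G$ nor $\overline{G}$ contains $\widehat{K}_n=K_1+P_{n-1}$. Concretely, $N-1=(5n-8)/2$ for even $n$, $(5n-7)/2$ for $n\equiv 3 \pmod 4$, and $(5n-9)/2$ for $n\equiv 1 \pmod 4$. The criterion I will verify throughout is local: $G$ contains $\widehat{K}_n$ exactly when some vertex $v$ has a path on $n-1$ vertices in $G[N(v)]$. So for every vertex I must check that neither its red nor its blue neighbourhood contains $P_{n-1}$.

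For these checks I will use two elementary path bounds. First, a path lies inside one connected component, so if every component of the neighbourhood has at most $n-2$ vertices there is no $P_{n-1}$. Second, if a set $S$ is independent in the neighbourhood graph, then consecutive path vertices cannot both lie in $S$, so a path has at most $2|N(v)\setminus S|+1$ vertices. The second bound is why blocks of size about $n/2$ appear: the complete bipartite graph $K_{a,b}$ with $a<b$ has longest path of order $2a+1$, and this is at most $n-2$ precisely when $a\le (n-3)/2$.

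\emph{Candidate construction.} I will build $G$ from two disjoint cliques $A,B$ of order roughly $n-2$ and blocks $X_1,X_2,\dots$ of order roughly $(n-3)/2$. The edges between blocks are coloured so that three conditions hold.
\begin{itemize}
\item[(i)] Every red neighbourhood splits into red components of order at most $n-2$.
\item[(ii)] Every blue neighbourhood is covered by an independent set together with at most $(n-3)/2$ further vertices.
\item[(iii)] No vertex sees both $A$ and $B$ completely in one colour while $A$ and $B$ are complete to each other in that colour.
\end{itemize}
Condition (iii) is needed because otherwise a balanced $K_{n-2,n-2}$ sits in a neighbourhood and contains $P_{n-1}$. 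The guiding count is $(n-1)+(n-1)+(n-3)/2=(5n-7)/2$, which matches the $n\equiv 3 \pmod 4$ value. This suggests a three-block arrangement with small blocks of size $\lfloor (n-3)/2\rfloor$ or $\lceil (n-3)/2\rceil$, and a careful choice of the colouring between the two small halves. The $\operatorname{sgn}'(n)$ term should then come out of rounding. When $n\equiv 1\pmod 4$, $(n-3)/2$ is an odd integer, and the half-blocks cannot be split symmetrically, so one vertex is lost. When $n$ is even, $(n-3)/2$ is a half-integer and the loss is half a vertex on average. I would fix the block sizes in each of the three residue classes separately and list them explicitly, as in the proof of Theorem~\ref{thmfan}.

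\emph{Verification and the main obstacle.} Verification then runs vertex type by vertex type (a vertex of $A$, of $B$, of each small block), and for each type in both colours. For each, I would identify the neighbourhood, then apply either the component bound or the independent-set bound $2|N(v)\setminus S|+1\le n-2$. The main obstacle is choosing the colouring between the small blocks and the two big cliques. The naive choice, with $A$ and $B$ blue to each other and everything else blue, makes each small-block vertex see a blue $K_{n-1,n-1}$. Recolouring one side red instead tends to merge $A$ with a small block into a red component of order about $3n/2$. The first thing I would try is a symmetric, $C_5$-like arrangement: two large red cliques and three sets of order about $n/6$ to $n/4$ each, with red and blue between consecutive pieces chosen so that each large clique is red-adjacent to exactly one small set. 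I would then tune the sizes so that the red components stay within $n-2$ while the blue neighbourhoods stay unbalanced bipartite-like. If no such tuning reaches $(5n-7)/2$, I would fall back on a computer search for small $n$ (say $n=5,6,7$) to read off the pattern and then generalise it.
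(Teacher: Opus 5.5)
\textbf{Verdict: there is a genuine gap.} The proposal never produces a colouring, and for a lower bound like this the explicit colouring is the whole proof. Your reduction to neighbourhoods is correct, and so are your two path bounds: components of order at most $n-2$, and an independent set plus $d$ further vertices carries no path on more than $2d+1$ vertices. But everything after that is a search plan (``the first thing I would try'', ``fall back on a computer search'').

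Moreover, the template you commit to cannot reach the target. In your guiding count $(n-1)+(n-1)+(n-3)/2$, $A$ and $B$ are red cliques of order $n-1$.
\begin{itemize}
\item Take $v\in A$ and some $w\notin A$ that is red to $v$ and to some $a\in A\setminus\{v\}$. Then $w$, $a$ and the rest of $A\setminus\{v\}$ form a red $P_{n-1}$ inside $N(v)$. So every vertex outside $A$ has at most one red neighbour in $A$.
\item With a second such clique $B$, the red $A$--$B$ edges therefore form a matching.
\item Any further vertex $x$ is blue to all but at most one vertex of each of $A$ and $B$. Its blue neighbourhood then contains $K_{n-2,n-2}$ minus a matching, hence a blue $P_{n-1}$.
\end{itemize}
So a colouring containing two such cliques has at most $2n-2$ vertices, which is below the required $N-1$ for every $n\ge 6$. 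Your account of $\operatorname{sgn}'(n)$ via splitting half-blocks is also not the real mechanism.

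The missing idea is to use a \emph{single} red $K_{n-1}$, joined entirely in blue to a remainder $H$, and to exclude red copies by degree alone. The centre of $\widehat{K}_n$ has degree $n-1$, so red maximum degree $n-2$ suffices and no red path analysis is needed. Your two bounds, essentially your condition (ii), then settle the blue side:
\begin{itemize}
\item A blue centre in the clique needs a blue $P_{n-1}$ inside $H$. This is excluded if the blue components of $H$ have order at most $n-2$.
\item A blue centre in $H$ sees an independent set plus its blue neighbours in $H$. This is excluded if its blue degree in $H$ is less than $(n-2)/2$.
\end{itemize}
The paper's constructions are as follows.
\begin{itemize}
\item For $n=2m+2$: the red graph $K_{2m+1}\cup K_{m,m,m}$ on $5m+1$ vertices, whose blue remainder is $3K_m$.
\item For $n=2m+1$ with $m$ even: $K_{2m}\cup K_{m,m-1,m-1}$ on $5m-2$ vertices.
\end{itemize}
For $n=2m+1$, these two degree conditions force $|H|\le 3m-1$. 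Equality requires $H$ to be red $(2m-1)$-regular and blue $(m-1)$-regular. For even $m$ this is impossible because $(m-1)(3m-1)$ is odd, and that parity obstruction is exactly where the $-1$ in $\operatorname{sgn}'(n)$ comes from.

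For odd $m$ the paper attains $|H|=3m-1$ with four blocks $H_1,\dots,H_4$:
\begin{itemize}
\item edges are blue between $H_1,H_4$ and between $H_2,H_3$, and red between the other pairs;
\item suitable regular red and blue graphs sit inside each block, with block sizes depending on $m\bmod 8$;
\item blue components then have order at most $2m-1$.
\end{itemize}
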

\begin{proof}
  When $n$ is even, this lower bound is equivalent to \[R(\widehat{K}_{2m+2}) \ge 5m+2\,.\] We construct a graph on $5m+1$ vertices, denoted by $K_{2m+1} \cup K_{m,m,m}$. This graph is $2m$-regular, and thus does not contain $\widehat{K}_{2m+2}$ as a subgraph. In the complement of this graph, after removing an independent set $\overline{K}_{2m+1}$, the remaining subgraph consists of three disjoint copies of $K_m$. However, after removing any independent set from $\widehat{K}_{2m+2}$, the remaining graph either contains a vertex of degree $m$ or a connected component on $2m+1$ vertices. Neither of these cases appears in the graph $K_m$. Therefore, $\widehat{K}_{2m+2}$ is not contained in the complement of $K_{2m+1} \cup K_{m,m,m}$, and we conclude that $R(\widehat{K}_{2m+2}) \ge 5m+2$.

  When $n \equiv 1 \pmod{4}$, this lower bound is equivalent to \[R(\widehat{K}_{2m+1}) \ge 5m-1 \text{ for even }m\,.\] We construct the extremal graph $K_{2m} \cup K_{m,m-1,m-1}$, which serves as the required graph. The analysis follows similarly to the previous case. In this graph, each vertex has degree at most $2m-1$, so it does not contain $\widehat{K}_{2m+1}$ as a subgraph. In the complement of this graph, after removing an independent set $\overline{K}_{2m}$, the remaining subgraph consists of $K_m \cup K_{m-1} \cup K_{m-1}$. However, after removing any independent set from $\widehat{K}_{2m+1}$, the remaining graph either contains a vertex of degree $m$ or a connected component on $2m$ vertices. Neither of these cases occurs in $K_m \cup K_{m-1} \cup K_{m-1}$. Therefore, $\widehat{K}_{2m+1}$ is not contained in the complement of $K_{2m} \cup K_{m,m-1,m-1}$, yielding that $R(\widehat{K}_{2m+1}) \ge 5m-1$.
  
  It is worth noting that $K_{2m} \cup K_{m,m-1,m-1}$ is not the only extremal graph. Another example can be constructed by first taking the disjoint union of $K_m$ and $K_{m-1,m-1}$, and then adding $2m$ vertices, each of which is connected to the original $3m-2$ vertices. Using a similar analysis, this graph can also be shown to be extremal for $(\widehat{K}_{2m+1}, \widehat{K}_{2m+1})$.
  
  When $n \equiv 3 \pmod{4}$, this lower bound is equivalent to \[R(\widehat{K}_{2m+1}) \ge 5m \text{ for odd }m\,.\]

  The extremal graph we use is shown in Figure~\ref{fig:kipas}. It can be seen that this graph closely resembles Figure~\ref{fig:main}. We will discuss four cases based on the remainder of $m$ modulo $8$.
  
  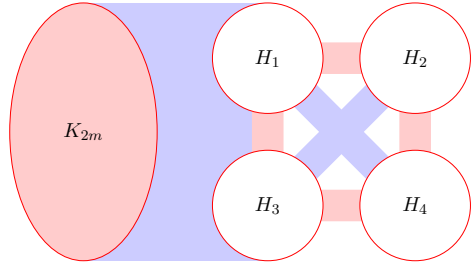
\begin{figure}[ht]
    \centering
    \scalebox{0.7}{
    \begin{tikzpicture}[scale=0.7, 
      vertex/.style={circle, draw, fill=white, inner sep=2pt},
      every label/.style={scale=0.8, black}]
    
      \fill[blue!20] (0, -3.5) rectangle (5,3.5);
    
      \draw[red!20, line width=6mm] (5, 2) -- (9, 2); 
      \draw[red!20, line width=6mm] (5, 2) -- (5, -2); 
      \draw[red!20, line width=6mm] (9, -2) -- (9, 2); 
      \draw[red!20, line width=6mm] (9, -2) -- (5, -2); 
      \draw[blue!20, line width=6mm] (5, 2) -- (9, -2); 
      \draw[blue!20, line width=6mm] (5, -2) -- (9, 2); 
    
      \draw[red, fill=red!20] (0, 0) ellipse (2cm and 3.5cm);
      \node at (0, 0) {$K_{2m}$};
    
      \draw[red, fill=white] (5, 2) circle (1.5cm);
      \node at (5, 2) {$H_1$};
    
      \draw[red, fill=white] (9, 2) circle (1.5cm);
      \node at (9, 2) {$H_2$};
    
      \draw[red, fill=white] (5, -2) circle (1.5cm);
      \node at (5, -2) {$H_3$};
    
      \draw[red, fill=white] (9, -2) circle (1.5cm);
      \node at (9, -2) {$H_4$};  
    \end{tikzpicture}}
  \caption{The extremal graph $G$}
  \label{fig:kipas}
  \end{figure}

If there exists an integer $k$ such that $m=8k+1$, let $|H_1|=6k+2$ and $|H_2|=|H_3|=|H_4|=6k$. The red induced subgraph of $H_1$ is a $(4k+1)$-regular graph, while the blue induced subgraph is a $2k$-regular graph. For both $H_2$ and $H_3$, their red induced subgraphs are $(4k-1)$-regular, and the blue induced subgraphs are $2k$-regular. The red induced subgraph of $H_4$ is $(4k+1)$-regular, and the blue induced subgraph is $(2k-2)$-regular.

If there exists an integer $k$ such that $m=8k+3$, let $|H_1|=|H_2|=|H_3|=|H_4|=6k+2$. The construction of the four graphs is identical: for each graph $H_i$, the red induced subgraph is $(4k+1)$-regular, and the blue induced subgraph is $2k$-regular.

If there exists an integer $k$ such that $m=8k+5$, let $|H_1|=|H_2|=|H_3|=6k+4$ and $|H_4|=6k+2$. The red induced subgraph of $H_1$ is $(4k+1)$-regular, and the blue induced subgraph is $(2k+2)$-regular. For both $H_2$ and $H_3$, their red induced subgraphs are $(4k+3)$-regular, and the blue induced subgraphs are $2k$-regular. The red induced subgraph of $H_4$ is $(4k+1)$-regular, and the blue induced subgraph is $2k$-regular.

If there exists an integer $k$ such that $m=8k+7$, let $|H_1|=|H_2|=6k+6$ and $|H_3|=|H_4|=6k+4$. The red induced subgraphs of both $H_1$ and $H_2$ are $(4k+3)$-regular, and the blue induced subgraphs are $(2k+2)$-regular. Similarly, for $H_3$ and $H_4$, the red induced subgraphs are $(4k+3)$-regular, and the blue induced subgraphs are $2k$-regular.

In all four cases, it can be verified that the red induced subgraph of $G$ is a $(2m-1)$-regular graph. Therefore, $G$ contains no red $\widehat{K}_{2m+1}$. The blue induced subgraph of $G-V(K_{2m})$ is an $(m-1)$-regular graph, with no blue connected component on $2m$ vertices. Hence, if $G$ contains a blue $\widehat{K}_{2m+1}$, its center must not lie in either $V(K_{2m})$ or $V(G)\setminus V(K_{2m})$. This analysis parallels the reasoning in Theorem~\ref{thmfan}. Therefore, $G$ also contains no blue $\widehat{K}_{2m+1}$.

In each case, it can be verified that $|G|=5m-1$. Thus, $R(\widehat{K}_{2m+1})\ge 5m$ for odd $m$.
\end{proof}

\section{Small wheel-wheel and wheel-clique Ramsey numbers}\label{Secwheelscliques}

The proofs of Theorems~\ref{W5W7} and~\ref{WheelClique} require the following observation.
\begin{lemma}
If a graph $G$ is triangle-free, then the blow-up graph $G[K_2]$ contains neither $W_5$ nor $W_6$ as subgraphs.
\end{lemma}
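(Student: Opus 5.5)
The plan is to use the notation of the paper: $W_5=K_1+C_4$ and $W_6=K_1+C_5$. In each case the hub $c$ sits in some copy $\{x,x'\}$ of the blow-up $G[K_2]$, where $x$ is a vertex of $G$. The rim must lie in $N_{G[K_2]}(c)$. This neighbourhood consists of the twin $c'$ (if $c=x$, then $c'=x'$) together with the pairs $\{y,y'\}$ for $y\in N_G(x)$. Since $G$ is triangle-free, $N_G(x)$ is an independent set in $G$. Hence the subgraph induced on $\bigcup_{y\in N_G(x)}\{y,y'\}$ is a disjoint union of copies of $K_2$. The twin $c'$ is adjacent to all of these vertices. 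So the neighbourhood of $c$ induces $K_1+tK_2$, where $t=d_G(x)$, i.e.\ a fan $F_t$ centred at $c'$.

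The statement therefore reduces to showing that the fan $K_1+tK_2$ contains no $C_4$ and no $C_5$, since a $W_5$ or $W_6$ centred at $c$ would need its rim cycle inside $N(c)$. I will argue as follows. Every cycle in $K_1+tK_2$ must pass through the centre $c'$, because deleting $c'$ leaves a matching, which is acyclic. Deleting $c'$ from a cycle of length $\ell$ leaves a path on $\ell-1$ vertices inside $tK_2$. The longest path in $tK_2$ has only $2$ vertices. Thus $\ell-1\le 2$, so every cycle in the fan is a triangle. In particular there is no $C_4$ or $C_5$, and hence no $W_5$ or $W_6$ in $G[K_2]$.

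The only step needing care is identifying the neighbourhood of the hub exactly. Two points must be checked. First, the twin $c'$ is adjacent to every other neighbour of $c$, because twins share closed neighbourhoods in $G[K_2]$. Second, two vertices in copies of distinct $y,z\in N_G(x)$ are nonadjacent, because $yz\notin E(G)$ by triangle-freeness. These are exactly the two facts that produce the fan structure. Beyond them I expect no real obstacle.
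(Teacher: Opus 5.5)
Your proof is correct. It rests on the same structural fact as the paper's: for triangle-free $G$, the closed neighbourhood of any vertex $u$ of $G[K_2]$ induces $K_2+tK_2$, so $N(u)$ induces the fan $K_1+tK_2$ centred at the twin. Where you differ is the finishing invariant.

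The paper argues via connectivity. $W_5$ and $W_6$ are $3$-connected, while $N[u]$ is not $3$-connected on at least five vertices, because deleting the twin pair $\{u,u'\}$ leaves components of order at most $2$. You instead place the rim cycle inside $N(u)$ and note that every cycle of the fan passes through its centre, hence is a triangle.

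\emph{What your route buys:} it is self-contained and handles subgraph containment explicitly. Read literally, the paper's sentence only says the induced graph on $N[u]$ is not $3$-connected. One must add that every subgraph on at least five vertices is also not $3$-connected: after deleting its at most two vertices in $\{u,u'\}$, at least three vertices remain inside $tK_2$, spanning at least two components.

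\emph{What the paper's route buys:} the connectivity formulation is the one reused verbatim for the $W_7$ versus $(K_4-e)$-free lemma. Your cycle-length argument adapts there equally easily. The hub's neighbourhood becomes $K_1+(aK_4\cup bK_2)$, whose cycles have length at most $5$, so a $C_6$ rim is impossible.
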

\begin{proof}
 If a graph $G$ is triangle-free, then for any vertex $u$ in $G[K_2]$, the subgraph induced by its closed neighborhood $N[u]$ cannot be 3-connected on at least five vertices (See Figure~\ref{fig:blowup}). However, both $W_5$ and $W_6$ are 3-connected graphs. Consequently, the graph $G[K_2]$ does not contain $W_5$ or $W_6$ as subgraphs. 
\end{proof}

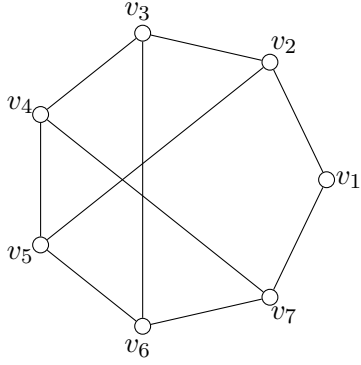
\begin{figure}[ht]
  \centering
  \begin{tikzpicture}[scale=1, every node/.style={inner sep=1pt, minimum size=6pt}]
      \foreach \i in {1, 2, ..., 7} {
          \node[circle, draw] (v\i) at ({360/7 * (\i - 1)}:2) {};
          \node[anchor=center] at ({360/7 * (\i - 1)}:2.3) {$v_\i$};
      }
    
      \draw (v1) -- (v2);
      \draw (v2) -- (v3);
      \draw (v3) -- (v4);
      \draw (v4) -- (v5);
      \draw (v5) -- (v6);
      \draw (v6) -- (v7);
      \draw (v7) -- (v1);
      \draw (v2) -- (v5);
      \draw (v3) -- (v6);
      \draw (v4) -- (v7);
    \end{tikzpicture}
  \caption{A cycle $C_7$ with three chords.}
  \label{fig:C7}
\end{figure}

\begin{proof}[\textbf{\textit{Proof of Theorem~\ref{W5W7}}}]
  To prove that $R(W_5, W_7) \ge 15$, consider the graph $G$ shown in Figure~\ref{fig:C7}, which is a 7-cycle $v_1v_2v_3v_4v_5v_6v_7v_1$ with three chords: $v_2v_5$, $v_3v_6$, and $v_4v_7$. Since this graph contains no triangles, $G[K_2]$ cannot contain $W_5$ as a subgraph. We only need to verify that $\overline{G[K_2]}$ does not contain $W_7$ as a subgraph, which would then establish that $R(W_5, W_7) \ge 15$. 
  
  If a vertex in $\overline{G[K_2]}$ is obtained by blowing-up one of the vertices from $\{v_3,v_4,v_5,v_6\}$, then the subgraph induced by its neighborhood is the graph $K_{2,4}$, which does not contain $C_6$ as a subgraph. If a vertex in $\overline{G[K_2]}$ is obtained by blowing-up one of the vertices from $\{v_1,v_2,v_7\}$, then in the subgraph induced by its neighborhood, each connected component has at most four vertices, and thus it does not contain $C_6$ as a subgraph. Therefore, there is no $W_7$ in $\overline{G[K_2]}$.
\end{proof}

Theorem~\ref{WheelClique} is derived from the following two lemmas and two tables.

\begin{lemma}
    For all positive integers $n$, we have \[R(W_5, K_n) \ge 2R(K_3, K_n)-1\ \text{and}\ R(W_6, K_n) \ge 2R(K_3, K_n)-1\,.\]
\end{lemma}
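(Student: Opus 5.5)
The plan is to take an extremal graph for $R(K_3,K_n)$ and blow it up with $K_2$. Let $N=R(K_3,K_n)-1$ and let $G$ be a graph on $N$ vertices that is triangle-free and whose complement contains no $K_n$, i.e.\ $\alpha(G)\le n-1$; such a $G$ exists by the definition of $R(K_3,K_n)$. We consider $G[K_2]$, which has $2N=2R(K_3,K_n)-2$ vertices. It then suffices to show that $G[K_2]$ contains neither $W_5$ nor $W_6$, and that $\overline{G[K_2]}$ contains no $K_n$; this gives both inequalities simultaneously.

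The first part is exactly the preceding lemma. Since $G$ is triangle-free, $G[K_2]$ contains neither $W_5$ nor $W_6$, so nothing further is needed there.

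For the second part we need $\alpha(G[K_2])\le n-1$. Take an independent set $S$ in $G[K_2]$. The two vertices in the copy of $K_2$ replacing a vertex $x$ of $G$ are adjacent, so $S$ contains at most one vertex from each copy. Moreover, if $S$ contains vertices from the copies of $x$ and $y$, then those two vertices are nonadjacent. By the definition of the blow-up, this means $xy\notin E(G)$. Hence projecting $S$ to $G$ is injective and gives an independent set of $G$ of the same size. Therefore $|S|\le\alpha(G)\le n-1$, and $\overline{G[K_2]}$ contains no $K_n$.

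Combining the two parts, $G[K_2]$ is a graph on $2R(K_3,K_n)-2$ vertices containing no $W_m$ for $m\in\{5,6\}$ whose complement has no $K_n$. This yields $R(W_m,K_n)\ge 2R(K_3,K_n)-1$. For small $n$ with $R(K_3,K_n)=1$ (i.e.\ $n=1$) the bound is trivial. There is no real obstacle in this argument: the only substantive ingredient is the triangle-free blow-up lemma, which is already available, and the independence-number computation is a routine check.
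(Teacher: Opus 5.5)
Your proof is correct and follows the paper's argument: you blow up a $(K_3,K_n)$-Ramsey graph by $K_2$, invoke the preceding triangle-free blow-up lemma to exclude $W_5$ and $W_6$, and observe that an independent set of $G[K_2]$ projects injectively to an independent set of $G$. Your explicit check of that last step supplies the detail the paper leaves as ``easy to see.''
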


\begin{proof}
    Consider the Ramsey graph $G$ for $(K_3, K_n)$, which has $R(K_3, K_n)-1$ vertices and does not contain $K_3$ as a subgraph, while its complement does not contain $K_n$ as a subgraph. Thus, $G[K_2]$ does not contain $W_5$ or $W_6$ as subgraphs. If the complement $\overline{G[K_2]}$ contains $K_n$ as a subgraph, it is easy to see that $\overline{G}$ would also contain $K_n$ as a subgraph, which leads to a contradiction. Therefore, $R(W_m, K_n) \ge 2|G|+1=2R(K_3, K_n)-1$ for $m=5,6$.
\end{proof}

\begin{lemma}
    For all positive integers $n$, we have \[R(W_7, K_n) \ge 2R(K_4-e, K_n)-1\,.\]
\end{lemma}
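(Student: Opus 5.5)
The plan is to reuse the blow-up construction from the previous lemma, now with a Ramsey graph for $(K_4-e,K_n)$. Let $G$ have $R(K_4-e,K_n)-1$ vertices, with $G$ containing no $K_4-e$ and $\overline{G}$ containing no $K_n$. I will show that $G[K_2]$ contains no $W_7$ and $\overline{G[K_2]}$ contains no $K_n$. Since $|G[K_2]|=2R(K_4-e,K_n)-2$, this gives the bound.

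\emph{The complement side.} This is the same as in the $W_5,W_6$ lemma. The two copies $x,x'$ of a vertex $x\in V(G)$ are adjacent in $G[K_2]$, so a clique of $\overline{G[K_2]}$ uses at most one copy of each vertex of $G$. Non-adjacency between copies of distinct vertices $x,y$ in $G[K_2]$ means $xy\notin E(G)$. Hence a $K_n$ in $\overline{G[K_2]}$ projects injectively onto a $K_n$ in $\overline{G}$, which is a contradiction.

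\emph{The main step: no $W_7=K_1+C_6$ in $G[K_2]$.} It suffices to show that for every vertex $u$ of $G[K_2]$, the subgraph induced by $N(u)$ has no $C_6$. Let $u$ be a copy of $x\in V(G)$.
\begin{enumerate}[(i)]
\item Since $G$ is $(K_4-e)$-free, every edge lies in at most one triangle. Hence $G[N(x)]$ has maximum degree at most $1$: if $y\in N(x)$ had two neighbors $z,z'$ in $N(x)$, then $x,y,z,z'$ would span a $K_4-e$. So $G[N(x)]$ is a disjoint union of isolated vertices and single edges.
\item In $G[K_2]$, the set $N(u)$ consists of the twin $u'$ of $u$ together with both copies of every $y\in N_G(x)$. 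The twin $u'$ is adjacent to all of them. Apart from $u'$, the induced graph is $G[N(x)][K_2]$, a disjoint union of $K_2$'s (from isolated vertices) and $K_4$'s (from edges).
\item Therefore $N(u)$ induces $K_1+(\text{disjoint union of } K_2\text{'s and }K_4\text{'s})$, in which $u'$ is a cut vertex. A cycle avoiding $u'$ lies within one component, so it has at most $4$ vertices. A cycle through $u'$ stays in $u'$ plus a single component, since $u'$ is a cut vertex, so it has at most $5$ vertices. Hence there is no $C_6$, and so no $W_7$ is centered at $u$.
\end{enumerate}

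Combining both parts gives $R(W_7,K_n)\ge 2(R(K_4-e,K_n)-1)+1=2R(K_4-e,K_n)-1$.

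The only real obstacle is step (i), namely recognizing that $(K_4-e)$-freeness forces every neighborhood in $G$ to induce a matching. After that, the cut-vertex argument in (iii) is routine.
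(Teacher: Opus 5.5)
Your proof is correct and follows the same route as the paper. Both blow up a $(K_4-e,K_n)$-Ramsey graph by $K_2$, rule out $W_7$ using the local structure of neighborhoods, and project any $K_n$ in $\overline{G[K_2]}$ injectively onto $\overline{G}$. Your version is more explicit than the paper's, which only asserts that $N[u]$ cannot be $3$-connected on at least seven vertices. Your observation that $G[N(x)]$ is a matching, so that $N(u)$ induces $K_1$ joined to disjoint $K_2$'s and $K_4$'s and therefore has no $C_6$, is exactly the detail that justifies that assertion.
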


\begin{proof}
    Consider the Ramsey graph $G$ for $(K_4-e, K_n)$, which has $R(K_4-e, K_n)-1$ vertices. This graph does not contain $K_4-e$ as a subgraph, and its complement does not contain $K_n$ as a subgraph.
    
    In the blow-up graph $G[K_2]$, for any vertex $u$, the subgraph induced by the closed neighborhood $N[u]$ cannot be 3-connected with at least seven vertices. If it were, then $G$ would contain a subgraph isomorphic to $K_4-e$, a contradiction. Since $W_7$ is a 3-connected graph on seven vertices, this implies that $G[K_2]$ does not contain $W_7$ as a subgraph.

    Moreover, if $\overline{G[K_2]}$ contains $K_n$ as a subgraph, then $\overline{G}$ would also contain $K_n$ as a subgraph, a contradiction. Therefore, we conclude that $R(W_7, K_n) \ge 2|G| + 1 = 2R(K_4-e, K_n)-1$.
\end{proof}

The exact values or lower bounds of the following small Ramsey numbers have been established by different researchers over the past seventy years. The contributors and references for these values can be found in Radziszowski's dynamic survey~\cite{Radziszowski2024}. The numbers highlighted in red represent the exact values of these Ramsey numbers.
\vskip 2mm

\begin{table}[ht]
  \centering
  \label{K3Kn}
  \begin{tabular}{c cccccccccccccc}
    \toprule
    The value of $n$ & 3 & 4 & 5 & 6 & 7 & 8 & 9 & 10 & 11 & 12 & 13 & 14 & 15 \\
    \midrule
    Lower bound & \textcolor{red}{6} & \textcolor{red}{9} & \textcolor{red}{14} & \textcolor{red}{18} & \textcolor{red}{23} & \textcolor{red}{28} & \textcolor{red}{36} & 40 & 47 & 53 & 60 & 67 & 74 \\
    \bottomrule
  \end{tabular}
  \caption{Lower bounds of $R(K_3, K_n)$ for $3 \le n \le 15$.}
\end{table}

\begin{table}[ht]
  \centering
  \label{K4-eKn}
  \begin{tabular}{c ccccccccc}
    \toprule
    The value of $n$ & 3 & 4 & 5 & 6 & 7 & 8 & 9 & 10 \\
    \midrule
    Lower bound & \textcolor{red}{7} & \textcolor{red}{11} & \textcolor{red}{16} & \textcolor{red}{21} & 28 & 36 & 41 & 49 \\
    \bottomrule
  \end{tabular}
  \caption{Lower bounds of $R(K_4-e, K_n)$ for $3 \le n \le 10$.}
\end{table}

The two tables above lead to the corresponding tables in Theorem~\ref{WheelClique}. This completes the proof of the theorem.

\subsection*{Acknowledgements}

Y. Chen was partially supported by National Key R\&D Program of China under grant number 2024YFA1013900 and NSFC under grant number 12471327.

\subsection*{Data Availability Statement}

No data was used or generated in this research.

\subsection*{Conflict of interest}

The authors declare that they have no known competing financial interests or personal relationships that could have appeared to influence the work reported in this paper.

\paragraph{Note added.}This paper was submitted for publication on Feb.\ 3, 2025. We later learned that DeBiasio and Wimbish independently obtained bounds for the Ramsey numbers of wheels in arXiv:2604.11937.

\end{document}